\newtheorem{theorem}{Theorem}[section]
\newtheorem{lemma}[theorem]{Lemma}
\newtheorem{proposition}[theorem]{Proposition}
\newtheorem{corollary}[theorem]{Corollary}
\theoremstyle{definition}
\newtheorem{definition}[theorem]{Definition}
\theoremstyle{remark}
\newtheorem{remark}[theorem]{Remark}
\numberwithin{equation}{section}
\newcommand{\Tau}{\mathrm{T}}
\newtheorem*{theorem-non}{Theorem}
\begin{document}

\setcounter{page}{1}

\title[Extremal functions]
 {Calder\'on's reproducing formula and extremal functions associated with the linear canonical Dunkl wavelet transform}

\author[Sandeep Kumar Verma and Umamaheswari S]{Sandeep Kumar Verma and Umamaheswari S}

\address{Department of Mathematics, SRM University-AP, Andhra Pradesh, Guntur--522502, India}

\email{sandeep16.iitism@gmail.com, umasmaheswari98@gmail.com}


\subjclass[2020]{42C40, 51F15, 44A20, 43A32, 46E22, 44A35}

\keywords{Dunkl Transform, Linear Canonical Dunkl Transform, Wavelet Transform, Reproducing Kernel, Extremal Function, Sobolev Space}

\begin{abstract}
In this article, we undertake a two-fold investigation. First, we establish Calder\'on’s reproducing formula for the linear canonical Dunkl continuous wavelet transform. Further, we define the reproducing kernel linear canonical Dunkl Sobolev space and introduce a novel inner product associated with the continuous wavelet transform in this space. We then derive explicit formulas for the reproducing kernels and present several related results. In the second part, we investigate extremal functions associated with both the continuous wavelet and linear canonical Dunkl transform. In particular, we characterize the extremal functions, represent them in terms of the corresponding reproducing kernels, and establish structural properties relevant to their formulation.

\end{abstract}
\maketitle
\begin{section}{Introduction}
The linear canonical transform (LCT) is a class of linear integral transform with matrix parameters $(a,b;c,d)=: M\in SL(2,\mathbb{R})$. The LCT plays an important role in many fields of
optics, radar system analysis, filter design, phase retrieval,
pattern recognition, and many others \cite{ pei2002eigenfunctions}.
It includes many well-known integral transforms such as the Fourier transform, the fractional Fourier transform, the Fresnel transform, etc. These integral transforms are of importance in several areas of physics and mathematics \cite{bultheel2007recent, collins1970lens, moshinsky1971linear}. In \cite{ghazouani2017unified}, Ghazouani et al. introduced the linear canonical Dunkl transform (LCDT), which is a generalization of the Dunkl transform, and established some important properties. LCDT generalizes many well-known integral transforms, such as the Dunkl transform \cite{de1993dunkl}, the fractional Dunkl
transform \cite{ghazouani2014fractional}, the linear canonical Fourier Bessel transform \cite{ghazouani2023canonical}, and the Hankel transform  \cite{TEC},  etc.  
\par 
Wavelet analysis is a mathematical tool for analyzing signals and data in various fields such as signal processing and image compression \cite{grossmann1984decomposition}, etc. It is based on wavelets, which are small waves or oscillations localized in both time and frequency domains. Unlike traditional Fourier analysis, which uses sinusoidal basis functions, wavelet analysis uses wavelet functions with variable duration and frequency content. 
The basic idea behind wavelet analysis is to decompose a signal into different frequency components at different scales. This is achieved by convolving the signal with a family of wavelet functions that are dilated and translated in the time domain.  
The classical wavelet transform defined in the framework of the Fourier transform is further generalized and studied in various integral transform domains, for instance, the wavelet transform associated with fractional and linear canonical Hankel transform \cite{mahato2017boundedness, Prasad}, Dunkl transform, etc.  In \cite{USK}, the authors introduced and studied the linear canonical Dunkl continuous wavelet transform (LCDWT) and which provides a valuable extension of wavelet analysis to settings with reflection symmetries. It allows for analyzing functions with complex geometric structures from the underlying reflection group, making it useful in areas with such symmetries. The versatile nature of LCDWT motivates us to develop some theory in the field of harmonic analysis. 
\par In 1964, Calder\'on introduced the reproducing formula for the $L^2$ functions \cite{Calderon}. Let $g,h \in L^2(\mathbb{R})$
and $\alpha>0$ such that
\begin{equation*}
    \int_0^\infty \hat{g}(\alpha\xi)\,\hat{h}(\alpha\xi)\, d\xi =1.
\end{equation*}
Then the Calder\'on reproducing formula is defined by
\begin{eqnarray}\label{e:1.1}
    f=\int_0^\infty f*g_\alpha*h_\alpha\,\frac{d\alpha}{\alpha},
\end{eqnarray}
where $*$ denotes the classical convolution associated with the Fourier transform, 
\begin{equation*}
g_\alpha(x)=\frac{1}{\alpha}\,g\left(\frac{x}{\alpha}\right)\qquad \text{and} \qquad h_\alpha(x)=\frac{1}{\alpha}\,h\left(\frac{x}{\alpha}\right).    
\end{equation*}
In other words, Calder\'on's reproducing formula allows us to reconstruct a given function $f$ through convolutions involving the dilated functions 
$g_\alpha$, $h_\alpha$, and $f$ itself. This formula was originally introduced to develop the Calder\'on-Zygmund theory for singular integral operators. Over time, it has been extended to various areas of applied mathematics and wavelet theory. Following Calderón's pioneering work, many authors, including Coifman, Folland, and Meyer \cite{Coifman, Folland, Meyer}, contributed to developing the reproducing formula in different settings. In these studies, the functions 
$f$ and $g$ are typically well-behaved, as most authors imposed specific conditions on them that ensure certain relations in the Fourier transform. Later, Rubin developed Calder\'on’s reproducing formula by relaxing these conditions as much as possible and presenting a formulation that does not rely on the Fourier transform \cite{rubin1995calderon}. In his analysis, Rubin observed that the properties of the right-hand side of \eqref{e:1.1} are determined not by  $g$ and $h$ individually, but by their convolution $\mu=g \ast h$, and accordingly derived the associated operator
\begin{eqnarray*}
I(\mu,f) &=& \int_0^\infty f \ast \mu_\alpha\,\frac{d\alpha}{\alpha}\\
&=& \int_\mathbb{R} f(x-\alpha   y)\,d\mu(y),
\end{eqnarray*}
where $\mu$ is a complex-valued finite Borel measure. 
The equality $I(\mu,f) = f$ represents a reproducing formula that led to the development of the modified wavelet transform, a topic investigated by A. Grossmann and J. Morlet in \cite{grossmann1984decomposition}. 
 Moreover, Rubin demonstrated the 
$L^p$-norm convergence of this new representation \cite{rubin1995calderon}. The theory of Calder\'on's reproducing formula has since played a significant role in the advancement of wavelet theory. Over time, research in this area has expanded into various settings, including quantum calculus, Bessel operators, and function spaces such as Besov and Triebel-Lizorkin spaces \cite{ Han, mourou1998calderon, Nemri}. In another manner, the reproducing formula can be understood in the context of best approximation, where the function 
$f$ is approximated using a convolution operator.
\par
In \cite{Byun}, Byun et al. developed a fundamental theorem to study the best approximation of functions within the framework of Hilbert spaces. The term best approximation refers to finding, for a given function $f$, the nearest function (in terms of distance) from a collection drawn from the range of a bounded operator defined on a reproducing kernel Hilbert space. Subsequently, many researchers have expanded on this theory and introduced more advanced techniques, including the Tikhonov regularization, Galerkin, kernel, and projection methods, among others.\\
This article has a twofold objective. In the first part, we study Calder\'on's reproducing formula in the context of the linear canonical continuous Dunkl wavelet transform (LCDWT), along with its convergence in the $L^p$-norm. The Calder\'on reproducing formula for the LCDWT is given below:
\begin{theorem-non}[A][see Theorem \ref{T: 4.12}]
Let $ f\in L^2_k(\mathbb{R})$ and wavelet pair $(\psi,\varphi)$ such that     $C^M_{\psi,\varphi} \neq 0$ and  $D^M_k(\psi),D^M_k(\varphi)\in L^\infty_k(\mathbb{R}).$  Then for 
 $0<\epsilon<\delta< \infty$
\begin{equation*}
    f^{\epsilon,\delta}(y) = \frac{1}{ (ib)^{k+1}\,C^M_{\psi,\varphi}} \int_{\epsilon}^{\delta} \int_{\mathbb{R}} \Phi^M_{\psi}f(\alpha,
    \beta)\, \varphi^M_{\alpha,\beta}(y)\,  d\nu_k(\alpha,\beta) ,\quad\quad y\in \mathbb{R}, 
\end{equation*}\\
belongs to $L^2_k(\mathbb{R})$. Moreover, 

\begin{equation*}
 \lim_{\epsilon \rightarrow 0,\delta \rightarrow \infty }    \| f^{\epsilon,\delta} - f\|_{L^2_k(\mathbb{R})} = 0. 
\end{equation*}
\end{theorem-non}
The second part of this article investigates the best approximation of the functions or extremal functions within the context of  LCDWT and LCDT, using the Tikhonov regularization method. We establish that
Sobolev-type space associated with the  linear canonical Dunkl transform  
$${\textbf{W}^{s}_{k,M}(\mathbb{R})} = \{ h \in \mathcal{S' (\mathbb{R}}) : (1+|\lambda|^2)^{\frac{s}{2}} D^M_k(h)(\lambda) \in L^2_k(\mathbb{R}), s\in \mathbb{R}\} $$
is a reproducing kernel Hilbert space with reproducing kernel $K_s(x, y), s>\frac{k+1}{2}$. We also examine the boundedness property of wavelet transform in $\textbf{W}^{s}_{k,M}(\mathbb{R})$. Further, using the theory of reproducing kernel and following the ideas of Soltani \cite{soltani2013extremal}, Saitoh \cite{saitoh1997integral, saitoh1988theory}, we obtain the best approximation of the wavelet operator $\Phi^{M}_{\psi}$ in Sobolev space $\textbf{W}^{s}_{k,M}(\mathbb{R})$. 
More precisely, for any $g \in L^2_k(\mathbb{R}^2_+)$ and  $\rho >0$, there exist a unique extremal  function $f^*_{\rho,g}$ such that the infimum 
\begin{equation*}\label{eq :6.6}
   \inf_{f \in \textbf{W}^s_{k,M}(\mathbb{R}) }\left\{ \rho \|f\|^2_{\textbf{W}^s_{k,M}(\mathbb{R})}+\| g-\Phi^M_{\psi}(f)\|^2_{ L^2_k(\mathbb{R}^2_+)} \right \}
  \end{equation*} is attained. In particular, for $f\in {W}^s_{k,M}(\mathbb{R})$ and $g=\Phi^M_{\psi}(f)$, the corresponding extremal functions $\{f^*_{\rho,\Phi^M_{\psi}(f)}\}_{\rho>0}$ 
 converges to $f$ as $\rho\rightarrow 0^+$.
 \par
 The rest of the paper is organized as follows. In Section \ref{sec :2}, we briefly review the linear canonical  Dunkl transform, translation operators, and convolution operator.  Further, we recall some of their basic properties, namely, boundedness of the translation operator, and factorization identity of the convolution product, etc. Moreover, we revisit the definition of continuous linear canonical Dunkl wavelet transform, and its properties like orthogonality and the inversion formula etc.
 In Section \ref{Sec: 3}, we study Calder\'on's reproducing formula associated with the LCDWT. In Section  \ref{sec :6}, we recall the definition of linear canonical Dunkl Sobolev space $\mathbf{W}_{k,M}^s(\mathbb{R})$ and derive the reproducing kernel on it. Further, we prove that $\mathbf{W}_{k,M}^s(\mathbb{R})$ is a reproducing kernel Hilbert space and LCDWT is a bounded operator from  $\mathbf{W}_{k,M}^s(\mathbb{R})$ to $L_k^2(\mathbb{R}_+^2)$. Next, we introduce the new inner product to define another reproducing kernel, on $\mathbf{W}_{k,M}^s(\mathbb{R})$. Using that reproducing kernel, we also show that a unique extremal function is connected to the linear canonical Dunkl wavelet transform and discuss some properties of extremal functions like convergence, continuity, and boundedness on $\mathbf{W}_{k,M}^s(\mathbb{R})$. Finally, we investigate the theory of extremal functions and prove some results associated with the linear canonical Dunkl transform.
\end{section}
\begin{section}{Preliminaries} \label{sec :2}
We begin this section by setting some notations and recalling basic facts from the linear canonical Dunkl theory, which will be used later on. For more details, see \cite{soltani2004lp, trime2002paley} and the references therein.
\\\\
\textbf{Notations:} $\mathcal{C}^n (\mathbb{R})$  the space of all $n$ times continuously differentiable  functions  on $\mathbb{R}$, $\mathcal{S}(\mathbb{R})$  the space of all infinitely differentiable functions on $\mathbb{R}$ which are rapidly decreasing with their derivatives, and $\mathcal{S'(\mathbb{R})}$ the space of all tempered distributions on $\mathbb{R}$. For $k \ge - \frac{1}{2}$,  $L^{p}_{k}(\mathbb{R})$ is  the space of all measurable functions $f$ on $\mathbb{R}$  such that
\begin{equation*}
\| f \|_{L_{k}^p(\mathbb{R})} =  \left \{
\begin{array}{ll}
\left( \int_{\mathbb{R}} |f(x)|^p d\mu_k(x) \right ) ^{\frac{1}{p}}<\infty,& \text{for}\quad1\le p<\infty,  \\ \\
\mathop{\underset{x \in \mathbb{R}}{\text{ess.sup}}} |f(x)| < \infty ,& \text{for}\quad p = \infty,
\end{array}
\right. 
\end{equation*}\\
where $\mu_k$ is the measure on $\mathbb{R}$, which is defined by 

\begin{equation*}
     d\mu_k(x) =\frac{|x|^{2k+1}dx}{2^{k+1}\Gamma(k+1)}.
 \end{equation*}\\
  For $p=2$, we have the inner product in  $L^{2}_{k}(\mathbb{R})$
 \begin{equation*}
  \langle f,g \rangle_{L^{2}_{k}(\mathbb{R})} = \int_{\mathbb{R}} f(x)  \overline{g(x)}  d\mu_k(x).
 \end{equation*}
 We denote  
\begin{equation*}
 \mathbb{R}^2_+ = \{(\alpha,\beta) : \alpha >0, \,\, 
 \beta\in \mathbb{R}\}.
 \end{equation*}
  We denote by $SL(2,\mathbb{R})$ the set of real unimodular matrices of order $2\times 2$. We shall write $2\times 2$ matrix for notational convenience as $M:=(a,b;c,d)$. 
 $L^p_k(\mathbb{R}^2_+)$ is the space of all measurable functions $f$ on $\mathbb{R}^2_+$ such that 
\begin{equation*}
\|f\|_{L^p_k(\mathbb{R}^2_+)} =  \left \{
\begin{array}{ll}
\left( \int_{\mathbb{R}^2_+} |f(\alpha,\beta)|^p d\nu_k(\alpha,\beta)\right)^{\frac{1}{p}} <\infty,& \text{for}\quad1\le p<\infty,  \\\\
\underset{(\alpha,\beta)\in \mathbb{R}^2_+}{\text{ess.sup}}  |f(\alpha,\beta)| < \infty,& \text{for}\quad p = \infty,
\end{array}
\right. 
\end{equation*}
\\
where $\nu_k$ is the measure on $\mathbb{R}^2_+$, which is defined by
\begin{equation*}
    d\nu_k(\alpha,\beta) = d\mu_k(\beta)\frac{d\alpha}{\alpha^{2k+3}},\qquad  \forall~~~(\alpha,\beta) \in \mathbb{R}^2_+.
\end{equation*}
\subsection{ Linear Canonical Dunkl  Transform}
We recall the definition of the LCDT and some of its basic properties.
\begin{definition}
Let $f\in L^1_{k}(\mathbb{R})$ and $M\in SL(2, \mathbb{R})$ such that $b\neq0$. Then the LCDT is defined by \cite{ghazouani2017unified} 
 \begin{equation*}
    \mathcal{D}^M_{k}(f)(\lambda) =  \int_{\mathbb{R}}f(x)E^M_{k}(\lambda,x)d\mu_{k,b}(x), 
 \end{equation*} 
 where $$d\mu_{k,b}(x)=\frac{1}{(ib)^{k+1}}d\mu_{k}(x)$$ and the kernel $E^M_{k}(\lambda,x) $ is given by 
\begin{equation*}
E^M_{k}(\lambda,x) = e^{\frac{i}{2}(\frac{d}{b}\lambda^2+\frac{a}{b}x^2)}E_{k}(-i\lambda/b,x). \label{eq:2.7}
\end{equation*} 
\end{definition}
For $b=0$, we have
\begin{eqnarray*}
D^M_{k}(f)(\lambda) = \frac{e^{i\frac{c}{2a}\lambda^2}}{|a|^{k +1}}f(\lambda/a).
\end{eqnarray*}
This case is of no particular interest and will be omitted from the rest of the article.
We listed some basic properties of LCDT below \cite{ghazouani2017unified, Ghobber, USK}:
\begin{itemize}
\item[(i)] {\bf{Plancherel's formula:}}
If $f \in  L^{1}_{k}(\mathbb{R})\cap L^{2}_{k}(\mathbb{R}) $, then  $D^M_k(f) \in L^{2}_{k}(\mathbb{R})$ and  
 \begin{equation}
 \|D^M_k(f)\|_{ L^{2}_{k}(\mathbb{R})} = \| f \|_{ L^{2}_{k}(\mathbb{R})}.\label{eq :2.6}
 \end{equation}
 \item[(ii)] {\bf{Parseval's formula:}} Let  $f,g \in
 L^{2}_{k}(\mathbb{R})$. Then,
 \begin{equation}
  \int_{\mathbb{R}} f(x) \overline{g(x)}d\mu_k(x) = \int_{\mathbb{R}} D^M_k (f)(\lambda) \overline{D^M_k(g)(\lambda)} d \mu_k(\lambda).\label{eq :2.7}
\end{equation}
 
\item[(iii)] The operator $D_k^M$ is a topological isomorphism from $\mathcal{ S(\mathbb{R})}$ onto itself, and from $\mathcal{ S^\prime(\mathbb{R})}$ onto itself.

\item[(vi)]  {\bf{Inverse formula} \cite{ghazouani2017unified} :} Let $M \in SL(2,\mathbb{R})$. Then for every $ f\in L^1_{k}(\mathbb{R})$ such that $D^M_{k}f \in L^1_{k}(\mathbb{R})$,  we have
\begin{equation*}
D^{M^{-1}}_{k} (D^M_{k}f)
=f ,\,\, \text{a.e},.    
\end{equation*}
More precisely, the inverse  linear canonical Dunkl transform is defined as \cite{ghazouani2017unified} 
 \begin{equation} \label{eq :2.8}
f(x)= \int_{\mathbb{R}}D_k^M(f)(\lambda)E^{M^{-1}}_{k}(x,\lambda) d\mu_{k,-b}(\lambda), \quad b \neq 0, 
 \end{equation}
 where $$d\mu_{k,-b}(x)=\overline{d\mu_{k,b}(x)}=\frac{1}{(-ib)^{k+1}}d\mu_{k}(x)$$ and the kernel is given by
 \begin{equation*}
 E^{M^{-1}}_{k}(x,\lambda) = e^{-\frac{i}{2}(\frac{a}{b}x^2+\frac{d}{b}\lambda^2)}E_{k}(ix/b,\lambda) 
 \end{equation*}
 and $M^{-1}$ is the inverse of matrix $M$.
 \end{itemize} 
 
\end{section}
  
\subsection{Generalized translation and convolution operator for LCDT } 

 Recently, Ghazouni and Sahbani proposed a definition of the translation operator in the canonical Fourier-Bessel domain \cite{ghazouani2023canonical}. Analogously, we first introduced a generalized translation operator and the convolution operator for the linear canonical Dunkl transform \cite{USK}. 
\begin{definition} \cite{USK} \label{d:3.1}
For $x,y \in \mathbb{R}$, and  $f $  be a continuous function on $\mathbb{R}$ and  $M\in SL(2,\mathbb{R}),~~b \neq 0$,   we define the  generalized translation operator 
\begin{eqnarray}\label{eq: 3.1}
\nonumber(\Tau^M_xf)(y)&=& e^{-\frac{i}{2}\frac{a}{b}(x^2+y^2)}\,\Tau_x\left(e^{\frac{i}{2}\frac{a}{b}(\cdot)^2}f(\cdot)\right)(y)
\\
 &=&\int_{\mathbb{R}}e^{i\frac{a}{b}z^2}f(z)\,W^M_{k}(x,y,z)\,d\mu_k(z),
\end{eqnarray}
where $\Tau_x$ denotes the generalized translation operator associated with the Dunkl transform and $$W^M_{k}(x,y,z) = e^{-\frac{i}{2}\frac{a}{b}(x^2+y^2+z^2)}W_{k}(x,y,z).$$
\end{definition}
\begin{remark}
We note that $\Tau^M_x$ reduces to the Dunkl translation for $M=(0,-1;1,0)$; and to the fractional Dunkl translation for $M=(\cos{\alpha},-\sin{\alpha;\sin{\alpha},\cos{\alpha}})$.
\end{remark}
We recall the properties of the generalized translation operator in the following:
\begin{proposition} 
Let  $M\in SL(2,\mathbb{R}),\,b \neq 0$, and $x,y\lambda\in \mathbb{R}$. Then the operator $\Tau^M_x$  satisfies the following properties
\begin{itemize}
    \item [(i)]{\bf{Product formula :}}  
\begin{equation*}
(\Tau^M_x E^{M^{-1}}_k(\lambda,\cdot))(y) = e^{\frac{i}{2}\frac{d}{b}\lambda^2} E^{M^{-1}}_k(\lambda,x)E^{M^{-1}}_k(\lambda,y).  
\end{equation*}
\item[(ii)] For $f\in L^1_{k}(\mathbb{R})  $, we have
\begin{eqnarray*}
D^M_{k}(\Tau^M_xf)(\lambda) &=&e^{-\frac{i}{2}\frac{a}{b}x^2} E_{k}(i\lambda/b,x)D^M_{k}(f)(\lambda), 
\\
\nonumber&=& e^{\frac{i}{2}\frac{d}{b}\lambda^2} E^{M^{-1}}_{k}(x,\lambda)D^M_{k}(f)(\lambda). 
\end{eqnarray*}
\item[(iii)]The generalized translation operator  $\Tau^M_x$ is a bounded linear operator on $L^p_{k}(\mathbb{R}), p \in [1,\infty]$ and satisfies
\begin{equation*}
 \|\Tau^M_xf\|_{L^{p}_{k} (\mathbb{R})} \le 4\,\|f\|_{L^{p}_{k} (\mathbb{R})}.
\end{equation*}
\end{itemize}
\end{proposition}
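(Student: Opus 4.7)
The plan is to reduce each of the three claims to the corresponding well-known fact about the classical Dunkl translation $\Tau_x$ and the Dunkl kernel $E_k$, using the factorization
\begin{equation*}
(\Tau^M_x f)(y)=e^{-\frac{i}{2}\frac{a}{b}(x^2+y^2)}(\Tau_x g)(y),\qquad g(z):=e^{\frac{i}{2}\frac{a}{b}z^2}f(z),
\end{equation*}
built into Definition~\ref{d:3.1}. For (iii), the modulation $e^{-\frac{i}{2}\frac{a}{b}(x^2+y^2)}$ has modulus one and $|g(z)|=|f(z)|$ pointwise, so $\|\Tau^M_x f\|_{L^p_k(\mathbb{R})}=\|\Tau_x g\|_{L^p_k(\mathbb{R})}=\|\Tau_x g\|_{L^p_k(\mathbb{R})}$; the bound then follows immediately from the standard estimate $\|\Tau_x\|_{L^p_k(\mathbb{R})\to L^p_k(\mathbb{R})}\le 4$ for the Dunkl translation.

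For (i), I would substitute $f(z)=E^{M^{-1}}_k(\lambda,z)=e^{-\frac{i}{2}(\frac{a}{b}z^2+\frac{d}{b}\lambda^2)}E_k(i\lambda/b,z)$ into the factorization. The Gaussian in $z$ cancels exactly, leaving $g(z)=e^{-\frac{i}{2}\frac{d}{b}\lambda^2}E_k(i\lambda/b,z)$, so that the classical Dunkl product formula $(\Tau_x E_k(i\lambda/b,\cdot))(y)=E_k(i\lambda/b,x)\,E_k(i\lambda/b,y)$ applies directly. Pulling the $\lambda$-constant back in and regrouping the three Gaussians in $x^2$, $y^2$, $\lambda^2$ into two copies of the $M^{-1}$-kernel together with one surviving $e^{\frac{i}{2}\frac{d}{b}\lambda^2}$ yields the stated product formula.

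For (ii), I would start from the LCDT definition, insert the factorization, and pull the $x$- and $\lambda$-dependent Gaussians outside the $y$-integral to arrive at
\begin{equation*}
D^M_k(\Tau^M_x f)(\lambda)=\frac{e^{-\frac{i}{2}\frac{a}{b}x^2}e^{\frac{i}{2}\frac{d}{b}\lambda^2}}{(ib)^{k+1}}\int_{\mathbb{R}}(\Tau_x g)(y)\,E_k(-i\lambda/b,y)\,d\mu_k(y).
\end{equation*}
The remaining integral is, up to a constant, the ordinary Dunkl transform of $\Tau_x g$ at frequency $\lambda/b$; by the classical intertwining identity it splits as $E_k(i\lambda/b,x)$ times the Dunkl transform of $g$ at $\lambda/b$. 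Unwinding $g$ back to $f$ reintroduces exactly a factor $(ib)^{k+1}e^{-\frac{i}{2}\frac{d}{b}\lambda^2}D^M_k(f)(\lambda)$, which cancels the prefactor and produces the first form in (ii); the second form then follows from the algebraic identity $e^{-\frac{i}{2}\frac{a}{b}x^2}E_k(i\lambda/b,x)=e^{\frac{i}{2}\frac{d}{b}\lambda^2}E^{M^{-1}}_k(x,\lambda)$ combined with the symmetry of the one-dimensional Dunkl kernel.

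The main, and essentially only, obstacle is the bookkeeping: three distinct Gaussians in $x$, $y$, and $\lambda$, the constant $(ib)^{k+1}$, and the sign conventions for $E_k$ must all be tracked with care so the cancellations come out exactly. No new analytic ingredient is required beyond this, since the Dunkl translation and Dunkl product formula already supply the convergence and regularity used along the way (everything is absolutely convergent for $f\in L^1_k(\mathbb{R})$).
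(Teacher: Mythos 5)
Your proposal is correct and follows the natural (essentially the only) route: the paper itself states this proposition without proof, recalling it from \cite{USK}, and the argument there is exactly this reduction of each item to the classical Dunkl translation via the chirp factorization $(\Tau^M_x f)(y)=e^{-\frac{i}{2}\frac{a}{b}(x^2+y^2)}(\Tau_x g)(y)$ with $g(z)=e^{\frac{i}{2}\frac{a}{b}z^2}f(z)$, invoking the Dunkl product formula, the intertwining of $\Tau_x$ with the Dunkl transform, and the known bound $\|\Tau_x\|_{L^p_k\to L^p_k}\le 4$. Your bookkeeping of the Gaussian factors is consistent with the convention under which the $d/b$ chirp attaches to the frequency variable of $E^{M^{-1}}_k$, which is the reading that makes the stated identities literally true, so no gap remains.
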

\begin{definition}\label{D :3.6} \cite{USK}
For $f,g \in \mathcal{S}(\mathbb{R}),$ and $M\in SL(2,\mathbb{R}),\,b \neq 0$, the generalized convolution for LCDT is defined by
\begin{equation*}
(f \underset{M}{\ast} g)(x) =  \int_{\mathbb{R}}e^{i \frac{a}{b}y^2}\,\Tau^M_xf(-y)\,  g(y)  \,d\mu_{k,b}(y).
\end{equation*} 
The convolution product $\underset{M}{\ast}$ is associative and commutative. We listed some results related to the convolution operator in the following:
\end{definition}
\begin{enumerate}
\item[(i)] \textbf{Factorization identity} For all $f \in L^1_{k}(\mathbb{R})$  and $g\in L^2_{k}(\mathbb{R})$. Then the factorization identity 
\begin{equation}
D^M_k(f \underset{M}{\ast} g)(\lambda) = e^{-\frac{i}{2}\frac{d}{b}\lambda^2} D^M_{k}(f)(\lambda) D^M_{k}(g)(\lambda)\label{eq :3.5}
\end{equation} 
\item[(ii)] If $1\le p \le \infty$, $f\in L_k^p(\mathbb{R})$ and $g\in L_k^1(\mathbb{R})$. Then $f \underset{M}{\ast} g$  is a bounded operator on $L_k^p(\mathbb{R})$ and we have
\begin{equation*} 
 \| f \underset{M}{\ast} g\|_{L_k^p(\mathbb{R})}  \le
\frac {4}{ |b|^{k+1}}\,\|f\|_{L^{p}_{k} (\mathbb{R})}\,\|g\|_{L^{1}_{k} (\mathbb{R})}.
\end{equation*}
\item[(iii)] Let $f,g \in L^2_k(\mathbb{R})$. Then
\begin{equation}
    \int_{\mathbb{R}}|(f \underset{M}{\ast} g)(x)|^2 d\mu_k(x) = \int_{\mathbb{R}}|D^M_k(f)(\lambda)|^2|D^M_k(g)(\lambda)|^2 d\mu_k(\lambda), \label{eq: 3.6}
\end{equation}
where both sides converge or diverge together.
\end{enumerate}
\subsection{Linear Canonical  Dunkl continuous Wavelet Transform} 
This subsection recalls the definition of linear canonical Dunkl wavelet transform and its basic properties. For more details, readers can refer to \cite{USK}. The admissibility criterion for the linear canonical Dunkl wavelet is as follows: 
\begin{definition}  \cite{USK} \label{def:5.6}
 A measurable function  $\psi$ is said to be a linear canonical Dunkl wavelet function if it  satisfies the following admissibility condition 
\begin{equation}
0 < C^{M}_{\psi} := \int_{0}^{\infty} |D^M_{k}(\psi)(\lambda\alpha)|^2 \,\frac{d\alpha}{\alpha}< \infty.
\label{eq :4.4}
\end{equation}
\end{definition}
We extend the notion of the wavelet to the two-wavelet in the LCDT setting as follows:
\begin{definition}
Let $\psi $ and $\varphi$ be in $ L^2_{k}(\mathbb{R})$. The pair $(\psi, \varphi)$ is said to be linear canonical two wavelets if \begin{equation}\label{eq: 4.5}
C^M_{\psi,\varphi} := \int_{0}^{\infty} D^M_k (\psi)(\lambda\alpha)\,\overline{D^M_k(\varphi)(\lambda\alpha)} \, \frac{d\alpha}{\alpha}< \infty,\quad \text{for almost  all}\,\,\lambda \in\mathbb{R}. 
\end{equation}
\end{definition}
Notice that if $\psi=\phi$ a linear canonical Dunkl wavelet then the constant $C^M_{\psi,\varphi}$ coincides with $C^M_{\psi}$.
In  \cite{USK}, the authors introduced the family of wavelets in the linear canonical Dunkl setting, using generalized translation and dilation, which is later used to define the continuous linear canonical Dunkl wavelet transform.
\begin{definition}  \cite{USK}
Let $(\alpha,\beta) \in \mathbb{R}^2_+$ and $\psi \in {L^2_{k}(\mathbb{R})}$ be a linear canonical Dunkl wavelet. Then we define the family of  linear canonical Dunkl wavelets on $\mathbb{R}$ by 
\begin{eqnarray}
\psi_{\alpha,\beta}^M(x) =  \alpha^{k+1}\,\Tau^M_\beta (\psi^M_{\alpha})(x), \label{eq: 4.1}
\end{eqnarray}
where $\Tau^M_\beta$ is the generalized translation operator in \eqref{eq: 3.1} and the dilation parameter $\alpha$ together with the chirp modulation acts on $\psi$ as follows 
\begin{eqnarray*}
\psi_{\alpha}^M(x) = \frac{1}{\alpha^{2k +2}}\, e^{-\frac{i}{2}\frac{a}{b}\left(1-\frac{1}{\alpha^2}\right) x^2}\psi\left(\frac{x}{\alpha}\right), \,\,\,\,\, \forall~~ x\in \mathbb{R} ~\text{and}~ \alpha
>0.
\end{eqnarray*}
\end{definition}

\begin{lemma}\label{L :4.2}
Let $\psi \in {L^2_{k}(\mathbb{R})}$. Then
\begin{eqnarray} \label{eq: 4.2}
D^M_k(\psi^M_\alpha)(\lambda) &=& e^{\frac{i}{2}\frac{d}{b}\lambda^2(1-\alpha^2)}D^{M}_k(\psi)(\alpha\lambda).
\end{eqnarray}
\end{lemma}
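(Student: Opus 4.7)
The plan is to verify the identity by a direct computation: substitute the definition of $\psi^M_\alpha$ into the integral defining the LCDT, perform a change of variable $y = x/\alpha$, and use the homogeneity of the one-dimensional Dunkl kernel to recognize the result as a chirp-modulated value of $D^M_k(\psi)$ at $\alpha\lambda$.

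More concretely, I would begin by writing
\begin{equation*}
D^M_k(\psi^M_\alpha)(\lambda)=\frac{1}{(ib)^{k+1}}\int_{\mathbb{R}} \psi^M_\alpha(x)\, e^{\frac{i}{2}(\frac{d}{b}\lambda^2+\frac{a}{b}x^2)}\,E_k(-i\lambda/b,x)\, d\mu_k(x),
\end{equation*}
and substituting the definition of $\psi^M_\alpha(x)$. The two chirp factors combine cleanly: $e^{-\frac{i}{2}\frac{a}{b}(1-\frac{1}{\alpha^2})x^2}\cdot e^{\frac{i}{2}\frac{a}{b}x^2} = e^{\frac{i}{2}\frac{a}{b}(x/\alpha)^2}$, so the chirp living on the new variable $y=x/\alpha$ appears automatically. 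The factor $e^{\frac{i}{2}\frac{d}{b}\lambda^2}$ pulls outside the integral.

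Next I would perform the change of variables $y=x/\alpha$. Since $d\mu_k(x)= \tfrac{|x|^{2k+1}}{2^{k+1}\Gamma(k+1)}dx$, we pick up the factor $\alpha^{2k+2}$, which exactly cancels the normalization $\alpha^{-(2k+2)}$ in front of $\psi^M_\alpha$. The remaining ingredient is the scaling identity
\begin{equation*}
E_k(-i\lambda/b,\alpha y) = E_k(-i\alpha\lambda/b,y),
\end{equation*}
valid for the one-dimensional Dunkl kernel since it depends on its two arguments through their product. After these steps the integral becomes
\begin{equation*}
\frac{e^{\frac{i}{2}\frac{d}{b}\lambda^2}}{(ib)^{k+1}}\int_{\mathbb{R}}\psi(y)\, e^{\frac{i}{2}\frac{a}{b}y^2}\, E_k(-i\alpha\lambda/b,y)\, d\mu_k(y).
\end{equation*}

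Finally, I would compare this to $D^M_k(\psi)(\alpha\lambda)$, whose definition inserts an $e^{\frac{i}{2}\frac{d}{b}(\alpha\lambda)^2}$ inside the same integral kernel. Pulling that factor out shows the integral above equals $e^{-\frac{i}{2}\frac{d}{b}\alpha^2\lambda^2}D^M_k(\psi)(\alpha\lambda)$ times $(ib)^{k+1}$, giving the claimed prefactor $e^{\frac{i}{2}\frac{d}{b}\lambda^2(1-\alpha^2)}$. The proof is essentially bookkeeping of chirp factors and one scaling change of variable; there is no real obstacle beyond making sure the Dunkl-kernel scaling relation and the Jacobian cancellations are applied correctly.
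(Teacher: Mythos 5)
Your computation is correct: the chirp factors combine exactly as you describe, the Jacobian $\alpha^{2k+2}$ from $d\mu_k$ under $y=x/\alpha$ cancels the normalization of $\psi^M_\alpha$, and the product-dependence of the rank-one Dunkl kernel gives $E_k(-i\lambda/b,\alpha y)=E_k(-i\alpha\lambda/b,y)$, so the prefactor $e^{\frac{i}{2}\frac{d}{b}\lambda^2(1-\alpha^2)}$ falls out as claimed. The paper states this lemma without proof (deferring to the cited reference), and your direct change-of-variables argument is the standard one; the only point worth adding is that the integral manipulation is literally valid for $\psi\in L^1_k(\mathbb{R})\cap L^2_k(\mathbb{R})$ and extends to all of $L^2_k(\mathbb{R})$ by Plancherel and density.
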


\begin{lemma} \label{L :4.3}
If $\psi \in {L^2_{k}(\mathbb{R})},$ then the LCDT of \, $\psi_{\alpha,\beta}^M$  is given by 
\begin{equation*}
D^M_k(\psi_{\alpha,\beta}^M)(\lambda) =\alpha^{ k+1}\,e^{-\frac{i}{2}\left(\frac{a}{b}\beta^2+\frac{d}{b}((\lambda\alpha)^2-\lambda^2)\right)} E_{k}(i\lambda /b,\beta) D^M_{k}(\psi)(\lambda \alpha). 
\end{equation*}
\end{lemma}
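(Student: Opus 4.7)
The plan is to combine the definition of $\psi_{\alpha,\beta}^M$ in \eqref{eq: 4.1} with two facts already in hand: the transform-of-translation identity from Proposition (ii) and the dilation formula from Lemma \ref{L :4.2}. Since $\psi_{\alpha,\beta}^M(x) = \alpha^{k+1}\,\Tau^M_\beta(\psi^M_\alpha)(x)$, linearity of $D^M_k$ reduces the problem to computing $D^M_k(\Tau^M_\beta \psi^M_\alpha)(\lambda)$.

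First, I would apply Proposition (ii) with $x=\beta$ and $f=\psi^M_\alpha$, so that
\begin{equation*}
D^M_k(\Tau^M_\beta \psi^M_\alpha)(\lambda) = e^{-\frac{i}{2}\frac{a}{b}\beta^2}\,E_k(i\lambda/b,\beta)\, D^M_k(\psi^M_\alpha)(\lambda).
\end{equation*}
Next I would substitute Lemma \ref{L :4.2}, replacing $D^M_k(\psi^M_\alpha)(\lambda)$ by $e^{\frac{i}{2}\frac{d}{b}\lambda^2(1-\alpha^2)}D^M_k(\psi)(\alpha\lambda)$, and multiply by the prefactor $\alpha^{k+1}$ coming from the definition of $\psi_{\alpha,\beta}^M$.

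The last step is purely algebraic: combine the two exponential factors into a single exponential. Using $1-\alpha^2 = -((\lambda\alpha)^2-\lambda^2)/\lambda^2 \cdot \lambda^2$ (more precisely, $\lambda^2(1-\alpha^2) = -( (\lambda\alpha)^2 - \lambda^2)$), the accumulated phase becomes exactly
\begin{equation*}
-\tfrac{i}{2}\Bigl(\tfrac{a}{b}\beta^2 + \tfrac{d}{b}\bigl((\lambda\alpha)^2-\lambda^2\bigr)\Bigr),
\end{equation*}
which matches the claim. There is no real obstacle here — the proof is just a two-line bookkeeping of phases — but one should double-check the sign in the $d/b$ exponent, since that is the only place where an error would creep in. A brief verification that $\psi^M_\alpha$ (and hence $\Tau^M_\beta \psi^M_\alpha$) lies in $L^2_k(\mathbb{R})$, so that all transforms are well defined, suffices to justify the computation.
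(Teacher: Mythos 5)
Your proposal is correct: the chain $\psi^M_{\alpha,\beta}=\alpha^{k+1}\Tau^M_\beta(\psi^M_\alpha)$, then Proposition~(ii) for the translation, then Lemma~\ref{L :4.2} for the dilation, together with the sign identity $\lambda^2(1-\alpha^2)=-\bigl((\lambda\alpha)^2-\lambda^2\bigr)$, reproduces the stated formula exactly. The paper itself omits the proof (deferring to \cite{USK}), but this is precisely the intended derivation; the only point worth a passing remark is that Proposition~(ii) is stated for $f\in L^1_k(\mathbb{R})$ and must be extended to $L^2_k(\mathbb{R})$ by density, which your closing comment essentially covers.
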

\begin{definition} \label{D: 4.4}
Let  $\psi \in {L^2_{k}(\mathbb{R})}$ be a linear canonical Dunkl wavelet and $f$ be a regular function on $\mathbb{R}$. Then the  linear canonical Dunkl continuous  wavelet transform is defined by \begin{eqnarray*}
 \Phi^{M}_{\psi}(f)(\alpha,\beta)& =&  \int_{\mathbb{R}} f(y)\,\overline{\psi_{\alpha,\beta}^M(y)}\,  d\mu_{k,b}(y) \qquad \forall \, (\alpha,\beta) \in \mathbb{R}^2_+.
\\ &= & \frac{1}{(ib)^{k+1}}\,\langle f, \psi_{\alpha,\beta}^M \rangle_{L^2_k(\mathbb{R})}.
  \end{eqnarray*}
\end{definition}
Detailed proof of the upcoming results is discussed in  \cite{USK}.
\begin{lemma}
Let $f\in L_k^2({\mathbb{R}})$ and $\psi$  be a linear canonical Dunkl wavelet function in $L_k^2({\mathbb{R}})$. Then the continuous wavelet  transform $\Phi^{M}_{\psi}$ is expressed in  terms of LCDT convolution, as follows
\begin{equation}
\Phi^{M}_{\psi}(f)(\alpha,\beta) = \alpha^{k+1}\, e^{i\frac{a}{b}\beta^2} \left(\mathcal{P}(f)\underset{M}{\ast} \overline{\mathcal{L}_{\frac{2a}{b}}{\psi^M_\alpha}}\,\right)(\beta), \label{eq: 4.3}
\end{equation}
where $\mathcal{P}(f)(\beta)=f(-\beta)$ and $\mathcal{L}_{\frac{2a}{b}}\psi_\alpha^M(\beta)=e^{i\frac{a}{b}\beta^2}\, \psi^M_\alpha(\beta)$.
\end{lemma}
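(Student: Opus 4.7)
The strategy is to unfold the right-hand side of \eqref{eq: 4.3} via the LCDT convolution in Definition \ref{D :3.6} and the kernel representation \eqref{eq: 3.1} of $\Tau^M$, and then recognise the result as the defining integral of $\Phi^M_\psi(f)(\alpha,\beta)$ from Definition \ref{D: 4.4}.

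First I would substitute $g=\overline{\mathcal{L}_{\frac{2a}{b}}\psi^M_\alpha}$ into Definition \ref{D :3.6}. Since $\overline{\mathcal{L}_{\frac{2a}{b}}\psi^M_\alpha(y)}=e^{-i\frac{a}{b}y^2}\overline{\psi^M_\alpha(y)}$, the intrinsic chirp $e^{i\frac{a}{b}y^2}$ in the convolution kernel cancels, yielding
$$ (\mathcal{P}(f)\underset{M}{\ast}\overline{\mathcal{L}_{\frac{2a}{b}}\psi^M_\alpha})(\beta) = \int_{\mathbb{R}}(\Tau^M_\beta \mathcal{P}(f))(-y)\,\overline{\psi^M_\alpha(y)}\,d\mu_{k,b}(y). $$
The central step is then a duality identity that transfers the translation from $\mathcal{P}(f)$ onto $\psi^M_\alpha$: using \eqref{eq: 3.1} to write $\Tau^M_\beta \mathcal{P}(f)$ as an integral against the Dunkl intertwining kernel $W_k$, then invoking the symmetry of $W_k$ in its three arguments together with the change of variable $z\mapsto -z$ and evenness of $d\mu_k$, I would rewrite the above as
$$ e^{-i\frac{a}{b}\beta^2}\int_{\mathbb{R}} f(y)\,\overline{(\Tau^M_\beta \psi^M_\alpha)(y)}\,d\mu_{k,b}(y). $$
Multiplying by $\alpha^{k+1}e^{i\frac{a}{b}\beta^2}$ and recalling \eqref{eq: 4.1} and Definition \ref{D: 4.4} completes the identification.

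The main obstacle is that middle duality step, which requires careful bookkeeping of the chirp weight $e^{\frac{i}{2}\frac{a}{b}(\cdot)^2}$ built into \eqref{eq: 3.1} and of how the reflection $\mathcal{P}$ interacts with $W_k$ under the substitution $z\mapsto -z$. As a cleaner cross-check, I would apply $D^M_k$ in the $\beta$-variable to both sides of \eqref{eq: 4.3}: the right-hand side is handled by the factorization identity \eqref{eq :3.5} together with Lemma \ref{L :4.2}, while the left-hand side is handled by Lemma \ref{L :4.3}; after cancelling common factors, the claim reduces to a pointwise identity among the Dunkl kernels $E_k$ and the chirp factors, which is straightforward to verify, and then Plancherel's formula \eqref{eq :2.6} concludes.
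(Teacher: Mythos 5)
Your proposal is essentially correct, but note that the paper does not actually prove this lemma: it is recalled from \cite{USK} (``Detailed proof of the upcoming results is discussed in \cite{USK}''), so there is no in-paper argument to compare against. Your first step checks out exactly: since $\overline{\mathcal{L}_{\frac{2a}{b}}\psi^M_\alpha(y)}=e^{-i\frac{a}{b}y^2}\,\overline{\psi^M_\alpha(y)}$, the chirp $e^{i\frac{a}{b}y^2}$ in Definition \ref{D :3.6} cancels and the convolution reduces to $\int_{\mathbb{R}}\Tau^M_\beta\mathcal{P}(f)(-y)\,\overline{\psi^M_\alpha(y)}\,d\mu_{k,b}(y)$. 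The only place where you should be more explicit is the duality step. Writing both that integral and $e^{-i\frac{a}{b}\beta^2}\int_{\mathbb{R}}f(z)\,\overline{\Tau^M_\beta\psi^M_\alpha(z)}\,d\mu_{k,b}(z)$ through the kernel representation \eqref{eq: 3.1}, the chirp factors on each side collapse to the \emph{same} phase $e^{\frac{i}{2}\frac{a}{b}(z^2-\beta^2-y^2)}$ after the substitution $z\mapsto -z$, so the whole identity reduces to the single kernel symmetry $W_k(\beta,-y,-z)=W_k(\beta,z,y)$; this follows from the standard relations $W_k(x,y,z)=W_k(y,x,z)=W_k(-x,z,y)$ together with the fact that $W_k$ is real-valued (a minor terminological point: $W_k$ is the product-formula/translation kernel, not the intertwining kernel). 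Stating that symmetry, and a one-line Fubini justification (using $\int|W_k(x,y,z)|\,d\mu_k(z)\le 4$ and Cauchy--Schwarz for $f,\psi^M_\alpha\in L^2_k(\mathbb{R})$), would make the proof complete. Your LCDT cross-check is also sound and is in effect the computation the authors carry out in Lemma \ref{L :4.13}, where \eqref{eq: 4.3} is combined with the factorization identity \eqref{eq :3.5} and Remark \ref{P: 4.8}; injectivity of $D^M_k$ on $L^2_k(\mathbb{R})$ then upgrades the transform-side identity to \eqref{eq: 4.3} itself.
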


\begin{remark} \label{P: 4.8}
(i) For $f \in\mathcal{S(\mathbb{R})}$, we have
$D^M_k[\mathcal{P}(f)](\lambda) = D^M_k (f)(-\lambda), \lambda \in \mathbb{R}$.   
\\
(ii) Let $\psi^M_\alpha \in \mathcal{S(\mathbb{R})} $. Then
 \begin{eqnarray*}
     D^M_k (\overline{\mathcal{L}_{\frac{2a}{b}}{\psi^M_\alpha}})(-\lambda) = e^{i\frac{d}{b}\lambda^2} \, \overline{D^M_k (\psi^M_\alpha)(\lambda),} \quad\quad \lambda \in \mathbb{R.}  
\end{eqnarray*}
\end{remark} 
We list below  some fundamental properties of LCDWT:
\begin{theorem}\label{T :4.9} \cite{USK}
Let the wavelets $\psi,~ \varphi\in {L^2_{k}(\mathbb{R})}$ satisfy the admissibility condition \eqref{eq: 4.5}, and let $ f,g\in  L^1_{k}(\mathbb{R})\cap L^2_{k}(\mathbb{R})$. 
Then, we have the orthogonality relation
\begin{enumerate}
    \item [(i)] \textbf{Orthogonality property} \begin{equation*}
 \int_{\mathbb{R}^2_+}\Phi^{M}_{\psi}(f)(\alpha,\beta)\,\overline{\Phi^{M}_{\varphi}(g)(\alpha,\beta)}\, d\nu_k(\alpha,\beta) =C^M_{\psi,\varphi}\int_{\mathbb{R}}f(x)\,\overline{g(x)}\,d\mu_k(x),
\end{equation*}
where  $C^M_{\psi,\varphi}$ is the admissible two wavelet constant \eqref{eq: 4.5}.\\
\item[(ii)] \textbf{Plancherel's formula} \begin{equation}
\int_{0}^{\infty} \int_{
\mathbb{R}}|\Phi^{M}_{\psi}(f)(\alpha,\beta)|^2 d\nu_k(\alpha,\beta) =C^M_{\psi}\int_{\mathbb{R}}|f(x)|^2d\mu_k(x).   \label{eq:5.9} 
\end{equation}
\item[(iii)] \textbf{Inversion formula}\begin{equation*}
f(x) = \frac{1}{(-ib)^{k+1}\,C^M_{\psi,\varphi}}  \int_{\mathbb{R}^2_+} \Phi^{M}_{\psi}(f)(\alpha,\beta) \varphi^M_{\alpha,\beta}(x) d\nu_k(\alpha,\beta).
\end{equation*}
\end{enumerate}
\end{theorem}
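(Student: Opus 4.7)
My plan is to establish (i) first, derive (ii) as its specialization to $f=g$ and $\psi=\varphi$, and obtain (iii) from (i) by a duality argument. The whole proof is an exercise in transporting the sesquilinear form on $\mathbb{R}^2_+$ into the spectral variable via the convolution identities already developed and then recognizing the resulting $\alpha$-integral as the admissibility constant. For (i), I start from the convolution representation \eqref{eq: 4.3}: the chirp factors $e^{\pm i\frac{a}{b}\beta^2}$ cancel, yielding
$$\Phi^M_\psi(f)(\alpha,\beta)\,\overline{\Phi^M_\varphi(g)(\alpha,\beta)}=\alpha^{2k+2}\bigl(\mathcal{P}(f)\underset{M}{\ast}\overline{\mathcal{L}_{\frac{2a}{b}}\psi^M_\alpha}\bigr)(\beta)\,\overline{\bigl(\mathcal{P}(g)\underset{M}{\ast}\overline{\mathcal{L}_{\frac{2a}{b}}\varphi^M_\alpha}\bigr)(\beta)}.$$
For each fixed $\alpha>0$, integrating in $\beta$ against $d\mu_k$ produces an $L^2_k(\mathbb{R})$ inner product of two $M$-convolutions, which I transport to the spectral side by polarizing \eqref{eq: 3.6} (equivalently by combining factorization \eqref{eq :3.5} with Parseval \eqref{eq :2.7}). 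The phase factors $e^{\mp\frac{i}{2}\frac{d}{b}\lambda^2}$ from \eqref{eq :3.5} cancel between the $\psi$- and $\varphi$-pieces. Applying Remark \ref{P: 4.8}(i)--(ii), the symmetry $\lambda\mapsto -\lambda$ of $d\mu_k$, and Lemma \ref{L :4.2} to relate $D^M_k(\psi^M_\alpha)(\lambda)$ to $D^M_k(\psi)(\alpha\lambda)$, the remaining $\alpha$-dependent chirps also cancel between $\psi$ and $\varphi$, and the $\beta$-integral collapses to
$$\alpha^{2k+2}\int_{\mathbb{R}}D^M_k(f)(\lambda)\,\overline{D^M_k(g)(\lambda)}\,\overline{D^M_k(\psi)(\alpha\lambda)}\,D^M_k(\varphi)(\alpha\lambda)\,d\mu_k(\lambda).$$

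Substituting into the outer $d\nu_k$-integral, the factor $\alpha^{2k+2}$ and the radial weight $\alpha^{-(2k+3)}d\alpha$ hidden in $d\nu_k$ combine to give $d\alpha/\alpha$. Fubini --- justified on the dense class $f,g\in L^1_k(\mathbb{R})\cap L^2_k(\mathbb{R})$ by the admissibility condition \eqref{eq: 4.5} --- then permits performing the $\alpha$-integral first, yielding the admissibility constant $C^M_{\psi,\varphi}$ from \eqref{eq: 4.5} as a $\lambda$-independent multiplier. A final application of Parseval \eqref{eq :2.7} recovers $\int_{\mathbb{R}} f\,\overline{g}\,d\mu_k$, proving (i). Part (ii) is then immediate on taking $f=g$ and $\psi=\varphi$, since the constant becomes the real positive quantity $C^M_\psi$ as noted below \eqref{eq: 4.5}.

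For (iii), I insert Definition \ref{D: 4.4} into the factor $\overline{\Phi^M_\varphi(g)}$ of (i), use $\overline{(ib)^{k+1}}=(-ib)^{k+1}$, and swap the $(\alpha,\beta)$- and $y$-integrals by Fubini --- handled by first restricting to a dense class and passing to the limit with the aid of the Plancherel bound \eqref{eq:5.9} --- to obtain
$$C^M_{\psi,\varphi}\,\langle f,g\rangle_{L^2_k(\mathbb{R})}=\int_{\mathbb{R}}\overline{g(y)}\left[\frac{1}{(-ib)^{k+1}}\int_{\mathbb{R}^2_+}\Phi^M_\psi(f)(\alpha,\beta)\,\varphi^M_{\alpha,\beta}(y)\,d\nu_k(\alpha,\beta)\right]d\mu_k(y).$$
Since $g$ ranges over a dense subset of $L^2_k(\mathbb{R})$, the stated pointwise inversion formula follows. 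The principal obstacle is not any single analytic step but the careful bookkeeping of the chirp factors produced by \eqref{eq :3.5}, Lemma \ref{L :4.2}, and Remark \ref{P: 4.8}: one must verify that they pair off completely between the $\psi$- and $\varphi$-sides, leaving only the integrand appearing in \eqref{eq: 4.5}. Once those algebraic cancellations are in place, the Fubini and density arguments driving the rest of the proof are routine.
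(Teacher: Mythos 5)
The paper does not actually prove Theorem \ref{T :4.9}: it is recalled from \cite{USK} (``Detailed proof of the upcoming results is discussed in \cite{USK}''), so there is no in-paper argument to compare against. That said, your outline is correct and is precisely the route the surrounding machinery is built for: the convolution representation \eqref{eq: 4.3}, factorization \eqref{eq :3.5} combined with Parseval \eqref{eq :2.7} to move the $\beta$-integral to the spectral side, Remark \ref{P: 4.8} and Lemma \ref{L :4.2} to cancel the chirp factors, the matching of $\alpha^{2k+2}$ against the weight $\alpha^{-(2k+3)}\,d\alpha$ in $d\nu_k$, and Fubini to produce the admissibility constant; (ii) by specialization and (iii) by duality then follow as you describe. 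Indeed, Lemma \ref{L :4.13} in Section \ref{Sec: 3} carries out exactly the chirp bookkeeping you identify as the main obstacle, confirming that the phases pair off as you claim. Two small caveats: with \eqref{eq: 4.5} read literally your $\alpha$-integral comes out as $\int_0^\infty \overline{D^M_k(\psi)(\alpha\lambda)}\,D^M_k(\varphi)(\alpha\lambda)\,\tfrac{d\alpha}{\alpha}=\overline{C^M_{\psi,\varphi}}$; since Lemma \ref{L: 4.14} uses this conjugated expression as the normalizing constant, the intended definition of $C^M_{\psi,\varphi}$ is evidently the conjugate of what \eqref{eq: 4.5} displays, and your computation is consistent with the intended statement. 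Also, as in the source, one must tacitly assume that the integral in \eqref{eq: 4.5} is independent of $\lambda$ in order to pull $C^M_{\psi,\varphi}$ out of the $\lambda$-integral; your proof inherits this standing convention rather than resolving it.
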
\label{thm:5.9}
\section{Calder\'on's reproducing formula} \label{Sec: 3}
In this section, we define  Calder\'on's reproducing formula in a linear canonical Dunkl wavelet setting analogous to the classical Calder\'on's reproducing formula for the wavelet transform \cite{rubin1995calderon}. This formula is used to analyze functions by highlighting the significance of the convolution operator. Specifically, it enables the reconstruction of a function
$f$ in the limiting sense of the Calder\'on's reproducing formula. Initially, it was used in Calder\'on-Zygmund theory of singular integral operators, further, it was extended to wavelet theory \cite{daubechies1992ten}.  Later, using a similar notion, Mourou and Trim\`eche \cite{mourou1998calderon} extended the theory for the generalized convolution structure on the half line corresponding to the Bessel operator. Recently, Mejjaoli \cite{mejjaoli2020kl} has studied Calder\'on's reproducing formula associated with the $(k, a)$-generalized Fourier transform. Analogously, we will establish Calder\'on's reproducing formula in the framework of LCDWT as follows.

\begin{theorem}
Let $ f\in L^2_k(\mathbb{R})$ and wavelet pair $(\psi,\varphi)$ such that     $C^M_{\psi,\varphi} \neq 0$ and  $D^M_k(\psi),D^M_k(\varphi)\in L^\infty_k(\mathbb{R}).$  Then for 
 $0<\epsilon<\delta< \infty$
\begin{equation}
    f^{\epsilon,\delta}(y) = \frac{1}{ (ib)^{k+1}\,C^M_{\psi,\varphi}} \int_{\epsilon}^{\delta} \int_{\mathbb{R}} \Phi^M_{\psi}f(\alpha,
    \beta)\, \varphi^M_{\alpha,\beta}(y)\,  d\nu_k(\alpha,\beta) ,\quad\quad y\in \mathbb{R}, \label{eq :4.9}
\end{equation}\\
belongs to $L^2_k(\mathbb{R})$. Moreover, 

\begin{equation}
 \lim_{\epsilon \rightarrow 0,\delta \rightarrow \infty }    \| f^{\epsilon,\delta} - f\|_{L^2_k(\mathbb{R})} = 0. \label{eq :4.10}
\end{equation}\label{T: 4.12}
\end{theorem}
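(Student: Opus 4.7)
The plan is to pass to the LCDT side and establish the multiplier identity
$$D^M_k(f^{\epsilon,\delta})(\lambda) = m_{\epsilon,\delta}(\lambda)\, D^M_k(f)(\lambda), \qquad m_{\epsilon,\delta}(\lambda) := \frac{1}{C^M_{\psi,\varphi}} \int_{\epsilon}^{\delta} D^M_k(\psi)(\alpha\lambda)\,\overline{D^M_k(\varphi)(\alpha\lambda)}\,\frac{d\alpha}{\alpha}.$$
Once this identity is in hand, both conclusions will follow from the Plancherel formula \eqref{eq :2.6} together with a dominated-convergence argument on the LCDT side, since $D^M_k$ is an isometry on $L^2_k(\mathbb{R})$ and $m_{\epsilon,\delta}$ will turn out to be a uniformly bounded multiplier converging pointwise to $1$.

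To derive the multiplier identity, I would apply $D^M_k$ to \eqref{eq :4.9}, interchange the LCDT with the $(\alpha,\beta)$-integral by Fubini, and substitute Lemma \ref{L :4.3} for $D^M_k(\varphi^M_{\alpha,\beta})(\lambda)$. Inside the integrand I would use Parseval \eqref{eq :2.7} to rewrite
$$\Phi^M_\psi f(\alpha,\beta) = \frac{1}{(ib)^{k+1}} \int_{\mathbb{R}} D^M_k(f)(\xi)\,\overline{D^M_k(\psi^M_{\alpha,\beta})(\xi)}\,d\mu_k(\xi),$$
invoking Lemma \ref{L :4.3} a second time. Performing the $\beta$-integration first, the quadratic phases $e^{\pm\frac{i}{2}\frac{a}{b}\beta^2}$ cancel, and the residual $\beta$-integral $\int_{\mathbb{R}} E_k(i\lambda/b,\beta)\,\overline{E_k(i\xi/b,\beta)}\,d\mu_k(\beta)$ is precisely the LCDT-inversion pairing appearing in \eqref{eq :2.8}; it collapses the $\xi$-integration onto $\xi=\lambda$ and reproduces $D^M_k(f)(\lambda)$. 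In the surviving $\alpha$-integrand the two factors $\alpha^{k+1}$ coming from Lemma \ref{L :4.3} combine with the weight $\alpha^{-(2k+3)}$ in $d\nu_k$ to leave $d\alpha/\alpha$, and the remaining $\lambda$-dependent phases cancel pairwise, producing exactly $m_{\epsilon,\delta}(\lambda)\,D^M_k(f)(\lambda)$.

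With the multiplier identity in hand, the finish will be routine. Cauchy--Schwarz in the $\alpha$-integral defining $m_{\epsilon,\delta}$, combined with the one-wavelet admissibility constants in \eqref{eq :4.4}, yields the uniform bound $|m_{\epsilon,\delta}(\lambda)| \le \sqrt{C^M_\psi\, C^M_\varphi}/|C^M_{\psi,\varphi}| =: K$, independent of $\epsilon,\delta,\lambda$. Therefore $D^M_k(f^{\epsilon,\delta}) = m_{\epsilon,\delta}\,D^M_k(f) \in L^2_k(\mathbb{R})$, and Plancherel \eqref{eq :2.6} immediately gives $f^{\epsilon,\delta}\in L^2_k(\mathbb{R})$. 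By the very definition \eqref{eq: 4.5} of $C^M_{\psi,\varphi}$, one has $m_{\epsilon,\delta}(\lambda) \to 1$ for a.e. $\lambda$ as $\epsilon\to 0^+$ and $\delta\to\infty$. Dominated convergence applied to $|m_{\epsilon,\delta}(\lambda)-1|^2\,|D^M_k(f)(\lambda)|^2$, with dominator $(K+1)^2|D^M_k(f)(\lambda)|^2 \in L^1_k(\mathbb{R})$, yields $\|D^M_k(f^{\epsilon,\delta}) - D^M_k(f)\|_{L^2_k(\mathbb{R})} \to 0$, and Plancherel transfers this to $\|f^{\epsilon,\delta} - f\|_{L^2_k(\mathbb{R})} \to 0$, as required.

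The main obstacle in this program will be the careful bookkeeping of the quadratic phases $e^{\pm\frac{i}{2}\frac{a}{b}\beta^2}$ and the chirp factors $e^{\pm\frac{i}{2}\frac{d}{b}\lambda^2(1-\alpha^2)}$ generated by the two applications of Lemma \ref{L :4.3}; the identity rests on their exact pairwise cancellation once the $\beta$-integration has been carried out. The hypotheses $D^M_k(\psi), D^M_k(\varphi)\in L^\infty_k(\mathbb{R})$ will be used precisely to justify the Fubini interchanges on the finite strip $[\epsilon,\delta]\times\mathbb{R}$, since together with the wavelet Plancherel \eqref{eq:5.9} they supply the absolute integrability of the relevant double integrals. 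Should this interchange prove delicate for a general $f\in L^2_k(\mathbb{R})$, a clean alternative will be to establish the identity first for Schwartz $f$ and then extend to $L^2_k(\mathbb{R})$ by density, using the uniform multiplier bound $K$.
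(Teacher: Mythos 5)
Your overall architecture coincides with the paper's: reduce everything to the multiplier identity $D^M_k(f^{\epsilon,\delta})=K_{\epsilon,\delta}\,D^M_k(f)$, bound $K_{\epsilon,\delta}$ uniformly by $\sqrt{C^M_\psi C^M_\varphi}/|C^M_{\psi,\varphi}|$ via Cauchy--Schwarz and the admissibility condition, note the pointwise limit $K_{\epsilon,\delta}\to 1$, and finish with Plancherel plus dominated convergence. That endgame is exactly Lemma \ref{L: 4.14} and the paper's proof of Theorem \ref{T: 4.12}, and it is correct as you state it.

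The gap is in how you derive the multiplier identity. The step where you integrate in $\beta$ first and claim that $\int_{\mathbb{R}} E_k(i\lambda/b,\beta)\,\overline{E_k(i\xi/b,\beta)}\,d\mu_k(\beta)$ ``collapses the $\xi$-integration onto $\xi=\lambda$'' is a purely formal delta-function computation: that integral is not absolutely convergent (the kernels are bounded oscillatory functions, not integrable against $d\mu_k$), so the Fubini interchange you invoke to reach it is not available, and restricting to Schwartz $f$ does not repair this --- the offending $\beta$-integral involves only the kernels, not $f$. This is precisely the point the paper's proof is organized to avoid: in Lemma \ref{L :4.15} the inner $\beta$-integral is first recognized as a genuine $L^2_k$ pairing, namely $f^{\epsilon,\delta}(y)=\frac{1}{C^M_{\psi,\varphi}}\int_\epsilon^\delta \mathcal{P}(\mathcal{P}(f)\underset{M}{\ast}\overline{\mathcal{L}_{\frac{2a}{b}}\psi^M_\alpha})\underset{M}{\ast}\varphi^M_\alpha(y)\,\frac{d\alpha}{\alpha}$, after which the factorization identity \eqref{eq :3.5} (Lemma \ref{L :4.13}) and a duality argument against Schwartz test functions yield the multiplier identity rigorously. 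A related circularity: you deduce $f^{\epsilon,\delta}\in L^2_k(\mathbb{R})$ \emph{from} the multiplier identity, but you cannot legitimately write $D^M_k(f^{\epsilon,\delta})$ until membership in $L^2_k$ (or $L^1_k$) is known; the paper establishes $f^{\epsilon,\delta}\in L^2_k(\mathbb{R})$ first, by H\"older in $\alpha$ and the $L^2$ convolution bound of Lemma \ref{L :4.13}(ii). Both defects are repairable, and the repair is essentially the paper's convolution-based route, but as written your derivation of the central identity does not go through.
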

In other words, we observe that the Calder\'on reproducing formula $f^{\epsilon,\delta}$ closely approximates the inversion formula for the linear canonical Dunkl wavelet transform (LCDWT) as $\epsilon \to 0$ and $\delta \to \infty$. This process falls within the framework of approximation theory. Thus,  we will explore the theory of best approximation for the function(or extremal function) in the next section, serving as a counterpart to the Calder\'on reproducing formula.\\
We begin by proving Lemmas \ref{L :4.13}, \ref{L: 4.14}, and \ref{L :4.15}, which are essential for establishing Theorem \ref{T: 4.12}.

\begin{lemma}\label{L :4.13}
 Let $f \in L^2_k (\mathbb{R})$, and let $\psi$ and $\varphi$ be the linear canonical Dunkl wavelets which satisfy the hypothesis of Theorem  \ref{T: 4.12}. Then, the functions $\mathcal{P}(\mathcal{P}(f)\underset{M}{\ast} {\overline{\mathcal{L}_{\frac{2a}{b}}{\psi^M_\alpha}}})$ and $\mathcal{P}(\mathcal{P}(f)\underset{M}{\ast} \overline{\mathcal{L}_{\frac{2a}{b}}{\psi^M_\alpha}}) \underset{M}{\ast} {\varphi}^M_\alpha$  are in $L^2_k (\mathbb{R})$ and hence
\begin{eqnarray*}
&(i)~~ D^M_k [\mathcal{P}(\mathcal{P}(f)\underset{M}{\ast} \overline{\mathcal{L}_{\frac{2a}{b}}{\psi^M_\alpha}}) \underset{M}{\ast} {\varphi}^M_\alpha](\lambda) = D^M_k(f)(\lambda)\,\overline{D^M_k(\psi) (\alpha\lambda)}\, D^M_k (\varphi) (\alpha\lambda). \\
&\nonumber(ii)~~ \|\mathcal{P}(\mathcal{P}(f)\underset{M}{\ast} {\overline{\mathcal{L}_{\frac{2a}{b}}{\psi^M_\alpha}}}_\alpha) \underset{M}{\ast} {\varphi}^M_\alpha\|_{L^2_k(\mathbb{R})} \le\|D^M_k(\psi)\|_{L^\infty_k(\mathbb{R})}\|D^M_k(\varphi)\|_{L^\infty_k(\mathbb{R})}\|f\|_{L^2_k(\mathbb{R})}. 
\end{eqnarray*}
 
\begin{proof}
(i) From  equation \eqref{eq :3.5} and the Remark \ref{P: 4.8}, we have 
\begin{eqnarray*}
D^M_k [\mathcal{P}(\mathcal{P}(f)\underset{M}{\ast} {\overline{\mathcal{L}_{\frac{2a}{b}}{\psi^M_\alpha}}})](\lambda) &=& D_k^M[\mathcal{P}(f)\underset{M}{\ast} {\overline{\mathcal{L}_{\frac{2a}{b}}{\psi^M_\alpha}}}](-\lambda)\\ 
&=& e^{-\frac{i}{2}\frac{d}{b}\lambda^2} D^M_k[\mathcal{P}(f)](-\lambda)\, D^M_k (\overline{\mathcal{L}_{\frac{2a}{b}}{\psi^M_\alpha}})(-\lambda)
\\\\
&=& e^{\frac{i}{2}\frac{d}{b}\lambda^2} D^M_k(f)(\lambda)\, \overline{D^M_k(\psi^M_{\alpha}) (\lambda)}.
\end{eqnarray*}
Hence,
\begin{equation} \label{e :4.9}
 D^M_k [\mathcal{P}(\mathcal{P}(f)\underset{M}{\ast} \overline{\mathcal{L}_{\frac{2a}{b}}{\psi^M_\alpha}})](\lambda) =     e^{\frac{i}{2}\frac{d}{b}\lambda^2}\, D^M_k(f)(\lambda)\, \overline{D^M_k(\psi^M_{\alpha}) (\lambda)}.
\end{equation}
We assume 
\begin{equation} \label{e :4.10}
V(x) =\mathcal{P}(\mathcal{P}(f)\underset{M}{\ast} \overline{\mathcal{L}_{\frac{2a}{b}}{\psi^M_\alpha}})(x),\qquad x \in \mathbb{R}.
\end{equation}
Now, from \eqref{e :4.9}, \eqref{e :4.10} and Lemma \ref{L :4.2}, we have
\begin{eqnarray*}
 D^M_k \left(\mathcal{P}(\mathcal{P}(f)\underset{M}{\ast} \overline{\mathcal{L}_{\frac{2a}{b}}{\psi^M_\alpha}}) \underset{M}{\ast} \varphi^M_\alpha\right)(\lambda) &= &D^M_k (V\underset{M}{\ast}\varphi^M_\alpha)(\lambda) ,\qquad \lambda \in \mathbb{R}.\\
&=& e^{-\frac{i}{2} \frac{d}{b}\lambda^2}\, D^M_k(V)(\lambda)\,D^M_k(\varphi^M_\alpha )(\lambda)
\\
&=& D^M_k (f)(\lambda)\, \overline{D^M_k(\psi^M_{\alpha}) (\lambda)}\,D^M_k(\varphi^M_{\alpha}) (\lambda)
\\
&=&D^M_k (f)(\lambda)\, \overline{D^M_k(\psi) (\alpha\lambda)}\, D^M_k (\varphi) (\alpha\lambda). 
\end{eqnarray*}
This completes the proof of part (i).
\\\\
(ii) From identity (i), the Plancherel's formula \eqref{eq :2.6} and the fact that $D^M_k(\psi),\,D^M_k(\varphi) \in L^\infty_k(\mathbb{R})$, we obtain
\begin{equation*}
\|\mathcal{P}(\mathcal{P}(f)\underset{M}{\ast} \overline{\mathcal{L}_{\frac{2a}{b}}{\psi^M_\alpha}})\underset{M}{\ast} \varphi^M_\alpha\|_{L^2_k(\mathbb{R})} \le \|D^M_k(\psi)\|_{L^\infty_k(\mathbb{R})} \|D^M_k(\varphi)\|_{L^\infty_k(\mathbb{R})} \|f\|_{L^2_k({\mathbb{R})}}.
 \end{equation*}
This completes the proof.
\end{proof} 
\end{lemma}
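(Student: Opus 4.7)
The plan is to first verify that the two functions in the statement lie in $L^2_k(\mathbb{R})$, and then to reduce parts (i) and (ii) to a sequence of transform-side manipulations using the factorization identity \eqref{eq :3.5}, the Dunkl calculus of Lemma \ref{L :4.2} and Remark \ref{P: 4.8}, and the Plancherel isometry \eqref{eq :2.6}. All three of these are stated earlier in the excerpt.

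For the $L^2_k$-membership of $\mathcal{P}(\mathcal{P}(f)\underset{M}{\ast}\overline{\mathcal{L}_{2a/b}\psi^M_\alpha})$, I would first note that $\mathcal{P}(f)\in L^2_k(\mathbb{R})$, that $\overline{\mathcal{L}_{2a/b}\psi^M_\alpha}\in L^2_k(\mathbb{R})$ (since multiplication by a unimodular chirp and complex conjugation preserve the $L^2_k$-norm), and then apply identity \eqref{eq: 3.6} together with Lemma \ref{L :4.2}: the latter shows $|D^M_k(\psi^M_\alpha)(\lambda)|=|D^M_k(\psi)(\alpha\lambda)|\le\|D^M_k(\psi)\|_{L^\infty_k(\mathbb{R})}$, so the integral $\int|D^M_k(f)|^2|D^M_k(\overline{\mathcal{L}_{2a/b}\psi^M_\alpha})|^2\,d\mu_k$ is finite, giving the convolution in $L^2_k(\mathbb{R})$; the operator $\mathcal{P}$ is an isometry. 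Membership of the second function then follows by repeating the same argument with $\varphi^M_\alpha$ in place of the previous factor, once part (i) is established.

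To prove (i), I would work the LCDT inward and then outward. First, using Remark \ref{P: 4.8}(i), write $D^M_k[\mathcal{P}(\mathcal{P}(f)\underset{M}{\ast}\overline{\mathcal{L}_{2a/b}\psi^M_\alpha})](\lambda) = D^M_k[\mathcal{P}(f)\underset{M}{\ast}\overline{\mathcal{L}_{2a/b}\psi^M_\alpha}](-\lambda)$; then apply the factorization identity \eqref{eq :3.5} at $-\lambda$, obtaining an $e^{-\frac{i}{2}\frac{d}{b}\lambda^2}$ chirp times $D^M_k[\mathcal{P}(f)](-\lambda)\,D^M_k(\overline{\mathcal{L}_{2a/b}\psi^M_\alpha})(-\lambda)$. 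By Remark \ref{P: 4.8}(i)--(ii), these two factors become $D^M_k(f)(\lambda)$ and $e^{i\frac{d}{b}\lambda^2}\overline{D^M_k(\psi^M_\alpha)(\lambda)}$, producing the intermediate identity $D^M_k[\mathcal{P}(\mathcal{P}(f)\underset{M}{\ast}\overline{\mathcal{L}_{2a/b}\psi^M_\alpha})](\lambda)=e^{\frac{i}{2}\frac{d}{b}\lambda^2}D^M_k(f)(\lambda)\,\overline{D^M_k(\psi^M_\alpha)(\lambda)}$. Denoting this quantity's inverse image by $V$, I would then apply \eqref{eq :3.5} once more to $V\underset{M}{\ast}\varphi^M_\alpha$, producing another $e^{-\frac{i}{2}\frac{d}{b}\lambda^2}$ factor that cancels the existing chirp. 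Finally, substituting Lemma \ref{L :4.2} for both $D^M_k(\psi^M_\alpha)$ and $D^M_k(\varphi^M_\alpha)$, the complex-conjugate chirp $e^{-\frac{i}{2}\frac{d}{b}\lambda^2(1-\alpha^2)}$ from the first cancels the chirp $e^{\frac{i}{2}\frac{d}{b}\lambda^2(1-\alpha^2)}$ from the second, leaving exactly $D^M_k(f)(\lambda)\,\overline{D^M_k(\psi)(\alpha\lambda)}\,D^M_k(\varphi)(\alpha\lambda)$, as desired.

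Part (ii) is then immediate by the Plancherel isometry \eqref{eq :2.6}: squaring both sides of (i), integrating against $d\mu_k$, and pulling $|D^M_k(\psi)(\alpha\lambda)|^2|D^M_k(\varphi)(\alpha\lambda)|^2$ out by their $L^\infty_k$-bounds gives $\|\,\cdot\,\|_{L^2_k(\mathbb{R})}^2\le \|D^M_k(\psi)\|_{L^\infty_k(\mathbb{R})}^2\|D^M_k(\varphi)\|_{L^\infty_k(\mathbb{R})}^2\|f\|_{L^2_k(\mathbb{R})}^2$, and taking the square root finishes the proof. The only delicate step is bookkeeping the three unimodular chirps $e^{-\frac{i}{2}\frac{d}{b}\lambda^2}$, $e^{i\frac{d}{b}\lambda^2}$, and $e^{\pm\frac{i}{2}\frac{d}{b}\lambda^2(1-\alpha^2)}$, ensuring they cancel exactly; this is the part I would check most carefully, since any residual $\alpha$-dependent phase would spoil the clean form of (i) and hence the subsequent Plancherel-based estimate in (ii).
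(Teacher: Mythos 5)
Your proposal is correct and follows essentially the same route as the paper: Remark \ref{P: 4.8} plus the factorization identity \eqref{eq :3.5} to obtain the intermediate identity for $D^M_k[\mathcal{P}(\mathcal{P}(f)\underset{M}{\ast}\overline{\mathcal{L}_{\frac{2a}{b}}\psi^M_\alpha})]$, a second application of \eqref{eq :3.5} to cancel the chirp, Lemma \ref{L :4.2} to cancel the $\alpha$-dependent phases, and Plancherel for (ii). The only difference is that you spell out the $L^2_k$-membership via \eqref{eq: 3.6} a bit more explicitly than the paper does, which is a harmless (indeed welcome) addition.
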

\begin{lemma}\label{L: 4.14}
 Let $\psi$ and  $\varphi$ be the linear canonical Dunkl wavelet that satisfies the hypothesis of the Theorem  \ref{T: 4.12}. Then the function 
 
\begin{equation*}
    K_{\epsilon,\delta}(\lambda) = \frac{1}{C^M_{\psi,\varphi}} \int_{\epsilon}^{\delta}\,
   \overline{ D^M_k (\psi)(\lambda\alpha)}\,D^M_k(\varphi)(\lambda\alpha) \,
     \frac{d\alpha}{\alpha},\qquad \lambda \in \mathbb{R}
\end{equation*}
 satisfies  
\begin{equation*}
    0<| K_{\epsilon,\delta}(\lambda)| \le \frac{\sqrt{C^M_{\psi}C^M_{\varphi}}}{|C^M_{\psi,\varphi}|},\qquad  \text{for almost all}\quad \lambda \in \mathbb{R}
\end{equation*}
and
\begin{equation*}
\lim_{\epsilon \rightarrow 0,\delta \rightarrow \infty }  K_{\epsilon,\delta}(\lambda) = 1 .
\end{equation*} 
\end{lemma}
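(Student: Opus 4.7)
The plan is to deduce all three claims about $K_{\epsilon,\delta}(\lambda)$---the uniform bound, a.e.\ non-vanishing, and the pointwise limit $1$---from a single Cauchy--Schwarz estimate with respect to the Haar measure $d\alpha/\alpha$, combined with the admissibility conditions \eqref{eq :4.4} and \eqref{eq: 4.5}. No other machinery should be required.

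For the upper bound I would pull $1/C^M_{\psi,\varphi}$ outside the absolute value and apply the triangle inequality, reducing the task to estimating
$$\int_\epsilon^\delta |D^M_k(\psi)(\lambda\alpha)|\,|D^M_k(\varphi)(\lambda\alpha)|\,\frac{d\alpha}{\alpha}.$$
A Cauchy--Schwarz step against $d\alpha/\alpha$ on $(\epsilon,\delta)$ separates $\psi$ and $\varphi$ into two square-integrals. Enlarging the domain of integration from $(\epsilon,\delta)$ to $(0,\infty)$ by monotonicity of the positive measure and invoking the admissibility condition \eqref{eq :4.4} bounds the two factors by $\sqrt{C^M_\psi}$ and $\sqrt{C^M_\varphi}$ respectively, which together give exactly the advertised bound $\sqrt{C^M_\psi C^M_\varphi}/|C^M_{\psi,\varphi}|$.

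For the pointwise limit the same Cauchy--Schwarz argument shows that the integrand of $K_{\epsilon,\delta}(\lambda)$ is absolutely integrable on $(0,\infty)$ for a.e.\ $\lambda$. Dominated convergence (or simply the definition of the improper integral) then gives
$$\lim_{\epsilon\to 0,\,\delta\to\infty} \int_\epsilon^\delta \overline{D^M_k(\psi)(\lambda\alpha)}\,D^M_k(\varphi)(\lambda\alpha)\,\frac{d\alpha}{\alpha} \;=\; C^M_{\psi,\varphi}$$
by \eqref{eq: 4.5} (matching conjugation conventions), so division by $C^M_{\psi,\varphi}\neq 0$ yields the claimed limit $1$.

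The most delicate step is the strict positivity $|K_{\epsilon,\delta}(\lambda)|>0$ for almost every $\lambda$. A priori, cancellation on the finite interval $(\epsilon,\delta)$ could make the complex-valued integral vanish at certain $\lambda$, even though the full integral is nonzero. I would handle this by using the continuity of $D^M_k(\psi)$ and $D^M_k(\varphi)$ to write $K_{\epsilon,\delta}(\lambda)$ as a continuous function of $\lambda$, and then arguing that its zero set must have Lebesgue measure zero: if it had positive measure, continuity together with the non-triviality of the integrand forced by $C^M_{\psi,\varphi}\neq 0$ would lead to a contradiction with the admissibility hypothesis. This identity-principle style argument is where the ``almost all $\lambda$'' quantifier enters essentially and is the main obstacle in the proof.
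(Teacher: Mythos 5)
Your treatment of the upper bound and of the limit is essentially the paper's own argument: Cauchy--Schwarz with respect to $d\alpha/\alpha$ on $(\epsilon,\delta)$, enlargement of the domain to $(0,\infty)$, and the admissibility condition \eqref{eq :4.4} give the bound $\sqrt{C^M_{\psi}C^M_{\varphi}}/|C^M_{\psi,\varphi}|$; the limit then follows because absolute integrability on $(0,\infty)$ lets $\int_\epsilon^\delta$ converge to $\int_0^\infty = C^M_{\psi,\varphi}$, whence division gives $1$. (The paper is in fact terser than you are here --- it writes ``Hence, we get'' for the limit --- so your explicit appeal to dominated convergence is an improvement. Your parenthetical about matching conjugation conventions is also well taken: as literally written, $K_{\epsilon,\delta}$ carries $\overline{D^M_k(\psi)}\,D^M_k(\varphi)$ while \eqref{eq: 4.5} carries $D^M_k(\psi)\,\overline{D^M_k(\varphi)}$, so the limit is $\overline{C^M_{\psi,\varphi}}/C^M_{\psi,\varphi}$ unless one of the two definitions is adjusted; this is an inconsistency in the statement, not in your argument.)

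The genuine gap is in your third step. There is no identity principle for merely continuous functions: a continuous function of $\lambda$ can vanish on a set of positive measure (indeed on a whole interval), so ``continuity plus non-triviality of the integrand'' cannot force the zero set of $K_{\epsilon,\delta}(\cdot)$ to be null, and the contradiction you hope to extract from the admissibility hypothesis does not materialize. For fixed $\epsilon<\delta$ the truncated integral $\int_\epsilon^\delta \overline{D^M_k(\psi)(\lambda\alpha)}\,D^M_k(\varphi)(\lambda\alpha)\,d\alpha/\alpha$ genuinely can vanish at particular $\lambda$ by cancellation even when the full integral over $(0,\infty)$ is nonzero, and nothing in the hypotheses rules out that this happens on a set of positive measure. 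You are right to single this claim out as the delicate one, but your proposed resolution does not close it. For what it is worth, the paper's own proof simply omits the strict positivity claim altogether, and that claim is never used in the proof of Theorem \ref{T: 4.12} (only the uniform bound and the a.e.\ limit are needed for dominated convergence there), so the honest fix is to drop $0<|K_{\epsilon,\delta}(\lambda)|$ from the statement rather than to try to prove it.
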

\begin{proof}
Using  Cauchy - Schwartz inequality, \eqref{eq :4.4} and \eqref{eq: 4.2}, we get
\begin{eqnarray*}
|K_{\epsilon,\delta}(\lambda)| &\le&\frac{1}{|C^M_{\psi,\varphi}|}\left( \int_{\epsilon}^{\delta} |D^M_k(\psi) (\alpha\lambda)|^2 \frac{d\alpha}{\alpha}\right)^{\frac{1}{2}}\left( \int_{\epsilon}^{\delta} |D^M_k(\varphi) (\alpha\lambda)|^2 \frac{d\alpha}{\alpha}\right)^{\frac{1}{2}}
\\\\
&\le & \frac{1}{|C^M_{\psi,\varphi}|}\left( \int_{0}^{\infty} |D^M_k(\psi) (\alpha\lambda)|^2 \frac{d\alpha}{\alpha}\right)^{\frac{1}{2}}\left( \int_{0}^{\infty} |D^M_k(\varphi) (\alpha\lambda)|^2 \frac{d\alpha}{\alpha}\right)^{\frac{1}{2}}\\\\
&=&\frac{\sqrt{C^M_{{\psi}}C^M_{\varphi}}}{|C^M_{\psi,\varphi}|}.
\end{eqnarray*}
Hence, we get
\begin{equation*}
\lim_{\epsilon \rightarrow 0,\delta \rightarrow \infty }  K_{\epsilon,\delta}(\lambda) = 1, ~\text{for almost all $\lambda \in \mathbb{R}$}. 
\end{equation*}
\end{proof}
\begin{lemma}\label{L :4.15}
Suppose $\psi,\,\varphi$ and $f$ are satisfying the  hypothesis of the Theorem \ref{T: 4.12}, then the function $f^{\epsilon,\delta}$ defined in \eqref{eq :4.9}  belongs to $L_k^2(\mathbb{R}) $ and satisfy the following identity 
 \begin{equation} \label{e:4.12}
  D^M_k ( f^{\epsilon,\delta} )(\lambda) = K_{\epsilon,\delta}(\lambda)\, D^M_k (f)(\lambda),\qquad \lambda \in \mathbb{R}.
 \end{equation}
\begin{proof}
First, we will prove that the function $f^{\epsilon,\delta} \in L^2_k(\mathbb{R})$. Indeed by using 
\eqref{eq: 4.1} and symmetry of the translation operator, we get
\begin{eqnarray*}
f^{\epsilon,\delta} (y) &=& \frac{1}{ (ib)^{k+1}\,C^M_{\psi,\varphi}}  \int_{\epsilon}^{\delta}\int_{\mathbb{R}}e^{i\frac{a}{b}\beta^2} (\mathcal{P}(f)\underset{M}{\ast} \overline{\mathcal{L}_{\frac{2a}{b}}{\psi^M_\alpha}})(\beta)\, \Tau^M_\beta(\varphi^M_{\alpha})(y)\,  d\mu_{k}(\beta)\, \frac{d\alpha}{\alpha}
\\\\
&=&\frac{1}{ C^M_{\psi,\varphi}}  \int_{\epsilon}^{\delta}\int_{\mathbb{R}}e^{i\frac{a}{b}\beta^2} (\mathcal{P}(f)\underset{M}{\ast} \overline{\mathcal{L}_{\frac{2a}{b}}{\psi^M_\alpha}})(-\beta)\, \Tau^M_{-\beta}(\varphi^M_{\alpha})(y)\,  d\mu_{k,b}(\beta)\, \frac{d\alpha}{\alpha}
\\\\
&=&\frac{1}{C^M_{\psi,\varphi}}  \int_{\epsilon}^{\delta}\int_{\mathbb{R}}e^{i\frac{a}{b}\beta^2}\, \mathcal{P}(\mathcal{P}(f)\underset{M}{\ast} \overline{\mathcal{L}_{\frac{2a}{b}}{\psi^M_\alpha}})(\beta)\,T^M_y(\varphi^M_{\alpha})(-\beta)\, d\mu_{k,b}(\beta)\, \frac{d\alpha}{\alpha}
\\\\
&=&\frac{ 1}{C^M_{\psi,\varphi}}  \int_{\epsilon}^{\delta}\,
\mathcal{P}(\mathcal{P}(f)\underset{M}{\ast} {\overline{\mathcal{L}_{\frac{2a}{b}}{\psi^M_\alpha}})}\, \underset{M}{\ast} \varphi^M_\alpha(y)\, \frac{d\alpha}{\alpha}.
\end{eqnarray*}
On applying H\"older's  inequality, we get
\begin{eqnarray*}
 |f^{\epsilon,\delta} (y)|^2 &\le& \frac{ 1}{|C^M_{\psi,\varphi}|^2}\left( \int_{\epsilon}^{\delta}\frac{d\alpha}{\alpha}\right)\int_{\epsilon}^{\delta}| \mathcal{P}(\mathcal{P}(f)\underset{M}{\ast} \overline{\mathcal{L}_{\frac{2a}{b}}{\psi^M_\alpha}}) \underset{M}{\ast} \varphi^M_\alpha(y)|^2\, \frac{d\alpha}{\alpha}. 
\end{eqnarray*}
On integrating both sides and applying the Fubini's theorem, we get
\begin{eqnarray*}
\int_{\mathbb{R}}|f^{\epsilon,\delta} (y)|^2 d\mu_k(y ) &\le& \frac{1}{|C^M_{\psi,\varphi}|^2}\left( \int_{\epsilon}^{\delta}\frac{d\alpha}{\alpha}\right)
\\
&&\times
\int_{\epsilon}^{\delta}\int_{\mathbb{R}}| \mathcal{P}(\mathcal{P}(f)\underset{M}{\ast} \overline{\mathcal{L}_{\frac{2a}{b}}{\psi^M_\alpha}}) \underset{M}{\ast} \varphi^M_\alpha(y)|^2
 d\mu_k(y)\frac{d\alpha}{\alpha}. 
\end{eqnarray*} 
By applying the Parseval's formula \eqref{eq :2.7} and Lemma \ref{L :4.13}, we obtain
\begin{eqnarray*}
\int_{\mathbb{R}}|f^{\epsilon,\delta} (y)|^2 \,d\mu_k(y )&\le& \frac{1}{|C^M_{\psi,\varphi}|^2}\left( \int_{\epsilon}^{\delta}\frac{d\alpha}{\alpha}\right)\int_{\mathbb{R}}|D^M_k (f)(\lambda)|^2 \\ &\times& \left(  \int_{\epsilon}^{\delta} |D^M_k(\psi) (\alpha\lambda)|^2\,|D^M_k(\varphi) (\alpha\lambda)|^2\, \frac{d\alpha}{\alpha} \right) d\mu_k(\lambda).
\end{eqnarray*}
Invoking \eqref{eq :4.4}, Lemma \ref{L :4.2}, and Plancherel's formula \eqref{eq :2.6}, we have
\begin{equation*}
\int_{\mathbb{R}}|f^{\epsilon,\delta} (y)|^2 d\mu_k(y )\le \frac{C^M_{\varphi}}{|C^M_{\psi,\varphi}|^2}\left( \int_{\epsilon}^{\delta}\,\frac{d\alpha}{\alpha}\right)  \|D^M_k(\psi)\|^2_{L^\infty_k(\mathbb{R})} \|f\|^2_{L^2_k(\mathbb{R})} < \infty.
\end{equation*}
Now we prove \eqref{e:4.12}. Let $h \in \mathcal{S(\mathbb{R})}$. Then $(D^M_k)^{-1}(h) \in \mathcal{S(\mathbb{R})}. $
\begin{multline*}
\int_{\mathbb{R}}  f^{\epsilon,\delta} (y)\overline{(D^M_k)^{-1}(h)(y)}\,  d\mu_k(y) 
\end{multline*}
\begin{eqnarray}
= \int_{\mathbb{R}}\left(\frac{ 1}{C^M_{\psi,\varphi}}  \int_{\epsilon}^{\delta}
\mathcal{P}(\mathcal{P}(f)\underset{M}{\ast} \overline{\mathcal{L}_{\frac{2a}{b}}{\psi^M_\alpha}}) \underset{M}{\ast} \varphi^M_\alpha(y)\, \frac{d\alpha}{\alpha}\right)\,\overline{(D^M_k)^{-1}(h)(y)}\,  d\mu_k(y).\label{eq :5.20}
\end{eqnarray}
We consider the following integration

\begin{multline*}
\int_{\epsilon}^{\delta}\left[
\int_{\mathbb{R}} 
|\mathcal{P}(\mathcal{P}(f)\underset{M}{\ast} \overline{\mathcal{L}_{\frac{2a}{b}}{\psi^M_\alpha}})\underset{M}{\ast} \varphi^M_\alpha(y)|\, |\overline{(D^M_k)^{-1}(h)(y)}|  d\mu_k(y)\right]\,\frac{d\alpha}{\alpha}
\end{multline*}
By Applying H\"older's  inequality inside the above integration and Plancherel's formula \eqref{eq :2.6}, then the Lemma \ref{L :4.13},  we get 
\begin{eqnarray*}
&\le& 
\int_{\epsilon}^{\delta}\|\mathcal{P}(\mathcal{P}(f)\underset{M}{\ast} \overline{\mathcal{L}_{\frac{2a}{b}}{\psi^M_\alpha}})\underset{M}{\ast} \varphi^M_\alpha\|_{L^2_k (\mathbb{R})}\|(D^M_k)^{-1}(h) \|_{L^2_k (\mathbb{R})} \frac{d\alpha}{\alpha}\\
&\le& 
\left( \int_{\epsilon}^{\delta} \frac{d\alpha}{\alpha}\right) \|D^M_k(\psi)\|_{L^\infty_k(\mathbb{R})}\|D^M_k(\varphi)\|_{L^\infty_k(\mathbb{R})}\|f\|_{L^2_k(\mathbb{R})}\|h\| _{L^2_k(\mathbb{R})} < \infty.
\end{eqnarray*}
From the equation \eqref{eq :5.20}, which can be expressed as

 \begin{multline*}
 \int_{\mathbb{R}}  f^{\epsilon,\delta} (y)\,\overline{(D^M_k)^{-1}(h)(y)}\,  d\mu_k(y) \\=
\frac{1}{C^M_{\psi,\varphi}} \int_{\epsilon}^{\delta}\left[
\int_{\mathbb{R}} 
\mathcal{P}(\mathcal{P}(f)\ast_M \overline{\mathcal{L}_{\frac{2a}{b}}{\psi^M_\alpha}})\ast_M \varphi^M_\alpha(y) \overline{(D^M_k)^{-1}(h)(y)} \, d\mu_k(y)\right]\,\frac{d\alpha}{\alpha}. 
\end{multline*}\\
We obtain the following from the Parseval's formula \eqref{eq :2.7}, and the Lemmas \ref{L :4.2}, \ref{L :4.13} and \ref{L :4.15}:
\begin{eqnarray*}
&&\int_{\mathbb{R}} D^M_k( f^{\epsilon,\delta}) (\lambda)\,\overline{h(\lambda)} \, d \mu_k(\lambda)
\\
&&=\frac{1}{C^M_{\psi,\varphi}} \int_{\epsilon}^{\delta}\left[
\int_{\mathbb{R}} D^M_k (f)(\lambda)\, \overline{ D^M_k (\psi)(\lambda\alpha)}\,D^M_k(\varphi)(\lambda\alpha)\, \overline{h(\lambda)}\, d\mu_k(\lambda) \right]\, \frac{d\alpha}{\alpha} \quad
\\
&&=\int_{\mathbb{R}} D^M_k (f)(\lambda)\left[ \frac{1}{C^M_{\psi,\varphi}}\int_{\epsilon}^{\delta} \overline{ D^M_k (\psi)(\lambda\alpha)}\,D^M_k(\varphi)(\lambda\alpha)\, \frac{d\alpha}{\alpha}\right]\overline{h(\lambda)}\, d\mu_k(\lambda)
\\
&&= \int_{\mathbb{R}} D^M_k (f)(\lambda)\, K_{\epsilon,\delta}(\lambda)\,\overline{h(\lambda)}\,d \mu_k(\lambda).
\end{eqnarray*}
We obtain,
\begin{equation*}
    \int_{\mathbb{R}}\left( D^M_k( f^{\epsilon,\delta}) (\lambda)-D^M_k (f)(\lambda) K_{\epsilon,\delta}(\lambda)\right)\,\overline{h(\lambda)}\, d\mu_k(\lambda) = 0 \text{\,\,\,\,\,\, for all $
h \in \mathcal{S(\mathbb{R})}$}.
\end{equation*}
Thus for each $\lambda \in \mathbb{R}$ we get,
$$ D^M_k ( f^{\epsilon,\delta} )(\lambda) =  K_{\epsilon,\delta}(\lambda)\,D^M_k (f)(\lambda). $$
 \end{proof}
\end{lemma}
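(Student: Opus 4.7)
The plan is to split the argument into three stages that mirror the two claims. First, I would rewrite $f^{\epsilon,\delta}$ as a single dilation integral of Dunkl convolutions, so that the spectral identity drops out of Lemma \ref{L :4.13}(i); then the $L^2_k$-membership will follow from a norm estimate that feeds admissibility \eqref{eq :4.4} of $\varphi$ back into the picture.

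\textbf{Step 1 (reduction to convolutions).} Substitute the convolution representation \eqref{eq: 4.3} of $\Phi^M_\psi(f)(\alpha,\beta)$ together with $\varphi^M_{\alpha,\beta}(y)=\alpha^{k+1}\Tau^M_\beta\varphi^M_\alpha(y)$ into the defining formula \eqref{eq :4.9}. Performing the change of variable $\beta\mapsto -\beta$ and transferring the translation from $\varphi^M_\alpha$ to the $\psi$-factor via the symmetry of $\Tau^M$ (which produces the outer $\mathcal{P}$) collapses the $\beta$-integral into a single Dunkl convolution, yielding
\[
f^{\epsilon,\delta}(y)=\frac{1}{C^M_{\psi,\varphi}}\int_\epsilon^\delta \mathcal{P}\bigl(\mathcal{P}(f)\underset{M}{\ast}\overline{\mathcal{L}_{\frac{2a}{b}}\psi^M_\alpha}\bigr)\underset{M}{\ast}\varphi^M_\alpha(y)\,\frac{d\alpha}{\alpha}.
\]
This is precisely the form on which Lemma \ref{L :4.13} bites.

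\textbf{Step 2 ($L^2_k$-membership).} Apply the Cauchy--Schwarz inequality in $\alpha$ over the bounded interval $[\epsilon,\delta]$ against $d\alpha/\alpha$ to obtain the pointwise bound
\[
|f^{\epsilon,\delta}(y)|^2\le\frac{\log(\delta/\epsilon)}{|C^M_{\psi,\varphi}|^2}\int_\epsilon^\delta\bigl|\mathcal{P}(\mathcal{P}(f)\underset{M}{\ast}\overline{\mathcal{L}_{\frac{2a}{b}}\psi^M_\alpha})\underset{M}{\ast}\varphi^M_\alpha(y)\bigr|^2\frac{d\alpha}{\alpha}.
\]
Integrating in $y$, swapping orders by Tonelli, invoking Plancherel \eqref{eq :2.6}, and then Lemma \ref{L :4.13}(i) rewrites the inner $L^2_k$-norm as $\int_\mathbb{R}|D^M_k(f)(\lambda)|^2|D^M_k(\psi)(\alpha\lambda)|^2|D^M_k(\varphi)(\alpha\lambda)|^2\,d\mu_k(\lambda)$. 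Bounding one factor by $\|D^M_k(\psi)\|_{L^\infty_k}^2$, exchanging the $\alpha$- and $\lambda$-integrations once more, and recognising the residual $\int_\epsilon^\delta|D^M_k(\varphi)(\alpha\lambda)|^2\,d\alpha/\alpha\le C^M_\varphi$ by admissibility \eqref{eq :4.4} gives $\|f^{\epsilon,\delta}\|_{L^2_k(\mathbb{R})}^2<\infty$.

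\textbf{Step 3 (spectral identity).} For arbitrary $h\in\mathcal{S}(\mathbb{R})$, set $u=(D^M_k)^{-1}(h)\in\mathcal{S}(\mathbb{R})$ and pair $\langle f^{\epsilon,\delta},u\rangle_{L^2_k(\mathbb{R})}$ using the representation from Step 1. Lemma \ref{L :4.13}(ii) gives a bound uniform in $\alpha\in[\epsilon,\delta]$, so with the finite length of $[\epsilon,\delta]$ in $d\alpha/\alpha$ we have an integrable majorant and Fubini applies. Apply Parseval \eqref{eq :2.7} inside each $d\alpha$-slice, use Lemma \ref{L :4.13}(i) to compute the Dunkl transform of the inner object as $D^M_k(f)(\lambda)\,\overline{D^M_k(\psi)(\alpha\lambda)}\,D^M_k(\varphi)(\alpha\lambda)$, and swap the $\alpha$- and $\lambda$-integrals once more; the $d\alpha/\alpha$-integral then equals $C^M_{\psi,\varphi}\,K_{\epsilon,\delta}(\lambda)$, so that
\[
\int_\mathbb{R}\bigl(D^M_k(f^{\epsilon,\delta})(\lambda)-K_{\epsilon,\delta}(\lambda)\,D^M_k(f)(\lambda)\bigr)\,\overline{h(\lambda)}\,d\mu_k(\lambda)=0
\]
for every $h\in\mathcal{S}(\mathbb{R})$. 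Density of $\mathcal{S}(\mathbb{R})$ in $L^2_k(\mathbb{R})$ promotes this to the a.e.\ identity \eqref{e:4.12}.

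\textbf{Main obstacle.} The one genuine subtlety is licensing Fubini in Step 3, which requires simultaneously controlling three iterated integrations (in $y$, $\alpha$, and $\lambda$). The $L^\infty_k$-hypothesis on $D^M_k(\psi)$ and $D^M_k(\varphi)$ is precisely what is needed: through Lemma \ref{L :4.13}(ii) it furnishes a uniform-in-$\alpha$ $L^2_k$-bound on the inner convolution, and through Lemma \ref{L: 4.14} it keeps $K_{\epsilon,\delta}$ pointwise bounded, so every interchange remains legitimate.
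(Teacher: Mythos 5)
Your proposal is correct and follows essentially the same route as the paper: the same reduction of $f^{\epsilon,\delta}$ to the single dilation integral of $\mathcal{P}(\mathcal{P}(f)\underset{M}{\ast}\overline{\mathcal{L}_{\frac{2a}{b}}\psi^M_\alpha})\underset{M}{\ast}\varphi^M_\alpha$, the same Cauchy--Schwarz/Plancherel/admissibility estimate for $L^2_k$-membership, and the same duality-plus-Fubini argument against $(D^M_k)^{-1}(h)$ for $h\in\mathcal{S}(\mathbb{R})$ to obtain the spectral identity. Your explicit attention to the integrable majorant justifying Fubini is, if anything, slightly more careful than the paper's presentation.
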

Thus, we are now in a position to prove Theorem \ref{T: 4.12}.
\begin{proof}
From Lemma \ref{L :4.15}, we have  $f^{\epsilon,\delta} \in L^2_k (\mathbb{R})$.
 The Plancherel's formula \eqref{eq :2.6} yields the following result:
\begin{eqnarray*}
\| f^{\epsilon,\delta} - f\|^2_{L^2_k (\mathbb{R})} &=& \int_{\mathbb{R}} | D^M_k ( f^{\epsilon,\delta})(\lambda) - D^M_k(f)(\lambda)|^2  d\mu_k(\lambda)
\\
&=& \int_{\mathbb{R}}|D^M_k (f)(\lambda)(K_{\epsilon,\delta}(\lambda) - 1)|^2  d\mu_k(\lambda)
\end{eqnarray*}
 Now, from Lemma \ref{L: 4.14}, we have
\begin{eqnarray*}
\lim_{\epsilon \rightarrow 0,\delta \rightarrow \infty } |D^M_k(f))\lambda)|^2|1-K_{\epsilon,\delta}(\lambda)|^2 &=& 0 \quad    \text{ for almost all } \lambda \in \mathbb{R}.
\end{eqnarray*}
Thus,  \eqref{eq :4.10} follows from the dominated convergence theorem.
\end{proof}

\section{Extremal functions on linear canonical Dunkl Sobolev space} \label{sec :6}
The theory of partial differential equations, mainly those pertaining to continuum mechanics or physics, has benefited greatly from the usage of Sobolev spaces. The theory of distributions and Fourier analysis provided a simpler way to employ these techniques and investigate their properties. In this section, we introduce the Sobolev space in terms of LCDT and investigate the theory of reproducing kernels. 

\begin{definition}
    Let $\mathcal{H}$ be a Hilbert space and $\Omega$ is a nonempty subset of $\mathbb{R}$. A function $K:\Omega\times\Omega\rightarrow\mathbb{R}$  is said to be a reproducing kernel of $\mathcal{H}$ if the following conditions are satisfied:
    \begin{itemize}
        \item[C1:] for each $x\in \Omega, K(\cdot, x)\in \mathcal{H}$
        \item[C2:]  for each  $x\in \Omega, K(\cdot, x)\in \mathcal{H}$, $\langle \phi, K(\cdot,x)\rangle =\phi(x)$,
    \end{itemize}
The condition C2 is called ``the reproducing property": the value of the function $\phi$ at the point $x$ is reproduced by the inner product of $\phi$ with $K(\cdot, x)$. 
\end{definition}
A Hilbert space that possesses a reproducing kernel is called a reproducing kernel Hilbert space (RKHS).

\begin{definition} \cite{USKM}
For $s \in \mathbb{R},$ the Sobolev space associated with the linear canonical Dunkl transform is defined as 
$${\textbf{W}^{s}_{k,M}(\mathbb{R})} = \{ h \in \mathcal{S' (\mathbb{R}}) : (1+|\lambda|^2)^{\frac{s}{2}} D^M_k(h)(\lambda) \in L^2_k(\mathbb{R})\}. $$
We define the inner product $\langle \cdot,\cdot \rangle_{\textbf{W}^{s}_{k,M}(\mathbb{R})}$  as follows
\begin{equation}
\langle f,g \rangle_{\mathbf{W}^{s}_{k,M}(\mathbb{R})} = \int_{\mathbb{R}}  (1+|\lambda |^2)^s\, D^M_k(f)(\lambda)\,\overline{D^M_k(g)(\lambda)}\,d \mu_k(\lambda), \qquad\forall f,g \in \textbf{W}^{s}_{k,M}(\mathbb{R}).   \label{eq: 5.1}  
\end{equation}
\end{definition}
For $k = -\frac{1}{2}$, the Sobolev space reduces to the Sobolev space in the theory of the Fourier transform.

\begin{proposition}\label{P :5.3}
    The space $\mathbf{W}^{s}_{k,M}(\mathbb{R}), s\in \mathbb{R}$ equipped with the inner product $\langle \cdot,\cdot \rangle_{\mathbf{W}^{s}_{k,M}(\mathbb{R})}$ is a Hilbert space.
\end{proposition}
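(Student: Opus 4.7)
The plan is to verify that $\langle\cdot,\cdot\rangle_{\mathbf{W}^{s}_{k,M}(\mathbb{R})}$ satisfies the inner product axioms, and then establish completeness by transferring it to $L^2_k(\mathbb{R})$ via an isometric isomorphism. The sesquilinearity and conjugate symmetry of the form \eqref{eq: 5.1} are immediate from the analogous properties of the $L^2_k$ inner product. For positive definiteness, note that $(1+|\lambda|^2)^s > 0$ everywhere, so $\langle f,f\rangle_{\mathbf{W}^{s}_{k,M}(\mathbb{R})}=0$ forces $D^M_k(f)=0$ a.e.; since $D^M_k$ is a topological isomorphism on $\mathcal{S}'(\mathbb{R})$ (property (iii) of the LCDT), this gives $f=0$ in $\mathcal{S}'(\mathbb{R})$.

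The core step is to introduce the weighting map
\begin{equation*}
\Theta: \mathbf{W}^{s}_{k,M}(\mathbb{R}) \longrightarrow L^2_k(\mathbb{R}), \qquad \Theta(f)(\lambda) = (1+|\lambda|^2)^{s/2}\, D^M_k(f)(\lambda).
\end{equation*}
By the very definition of $\mathbf{W}^{s}_{k,M}(\mathbb{R})$, this map is well defined, and the identity
\begin{equation*}
\|\Theta(f)\|^2_{L^2_k(\mathbb{R})} = \int_{\mathbb{R}}(1+|\lambda|^2)^s |D^M_k(f)(\lambda)|^2\, d\mu_k(\lambda) = \|f\|^2_{\mathbf{W}^{s}_{k,M}(\mathbb{R})}
\end{equation*}
shows $\Theta$ is an isometry. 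I would then show $\Theta$ is surjective: given $G\in L^2_k(\mathbb{R})$, the function $\lambda\mapsto (1+|\lambda|^2)^{-s/2}G(\lambda)$ is polynomially bounded, hence defines a tempered distribution, and since $D^M_k:\mathcal{S}'(\mathbb{R})\to\mathcal{S}'(\mathbb{R})$ is a topological isomorphism, there exists a unique $f\in\mathcal{S}'(\mathbb{R})$ with $D^M_k(f)(\lambda)=(1+|\lambda|^2)^{-s/2}G(\lambda)$; by construction $f\in\mathbf{W}^{s}_{k,M}(\mathbb{R})$ with $\Theta(f)=G$.

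Completeness then follows mechanically: if $(f_n)$ is Cauchy in $\mathbf{W}^{s}_{k,M}(\mathbb{R})$, then $(\Theta f_n)$ is Cauchy in $L^2_k(\mathbb{R})$ by the isometry, and by completeness of $L^2_k(\mathbb{R})$ converges to some $G\in L^2_k(\mathbb{R})$. Letting $f := \Theta^{-1}(G)\in\mathbf{W}^{s}_{k,M}(\mathbb{R})$, the isometry gives $\|f_n-f\|_{\mathbf{W}^{s}_{k,M}(\mathbb{R})}=\|\Theta f_n-G\|_{L^2_k(\mathbb{R})}\to 0$.

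The only place requiring care is the surjectivity argument, since one must confirm that multiplying an $L^2_k$ function by the weight $(1+|\lambda|^2)^{-s/2}$ produces a bona fide tempered distribution whose LCDT-preimage lies in $\mathcal{S}'(\mathbb{R})$; this is where the isomorphism property of $D^M_k$ on $\mathcal{S}'(\mathbb{R})$ is essential. Once that is in hand, the proof is just the observation that $\mathbf{W}^{s}_{k,M}(\mathbb{R})$ is isometrically isomorphic to the Hilbert space $L^2_k(\mathbb{R})$, hence itself a Hilbert space.
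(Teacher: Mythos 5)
Your argument is correct; the paper simply omits this proof as ``straightforward,'' and what you have written is the standard argument the authors presumably intend, namely pulling back the Hilbert structure of $L^2_k(\mathbb{R})$ along the weighted transform $f\mapsto (1+|\lambda|^2)^{s/2}D^M_k(f)$. One small wording point: in the surjectivity step the function $(1+|\lambda|^2)^{-s/2}G(\lambda)$ is not itself ``polynomially bounded''; what you actually need (and what holds) is that it is the product of an $L^2_k$ function with a weight of polynomial growth, which pairs continuously with Schwartz functions and hence defines a tempered distribution.
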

\begin{proof}
    The proof of the proposition is straightforward and hence it is omitted.
\end{proof}

\begin{remark} We observe that
    \begin{itemize}
        \item [(i)] For all $s,t \in \mathbb{R}$, such that $t>s$, the Sobolev space $\mathbf{W}^{t}_{k,M}(\mathbb{R})$ contained in $\mathbf{W}^{s}_{k,M}(\mathbb{R})$.\\
        \item[(ii)] $\mathbf{W}^{0}_{k,M}(\mathbb{R}) = L^2_k(\mathbb{R}).$
    \end{itemize}
\end{remark}
 In the following Proposition, we will prove that the Sobolev space $\mathbf{W}^{s}_{k,M}(\mathbb{R})$ is a reproducing kernel Hilbert space with the reproducing kernel $K_s(\cdot, y)$.
\begin{proposition}
The Sobolev space ${\textbf{W}^{s}_{k,M}(\mathbb{R})}$ admits the  reproducing kernel for $s> \frac{k +1}{2}$ :
\begin{equation*}
 K_s(x,y) = \frac{1}{|b|^{2k+2}} \int_{\mathbb{R}}\frac{E^M_k(\lambda,x)\overline{E^M_k(\lambda,y)}}{(1+|\lambda |^2)^s}\,  d \mu_k(\lambda).
\end{equation*}
Then
\begin{itemize}
 \item [(i)] For all $y \in \mathbb{R}$, the function $ K_s(\cdot,y)\in {\textbf{W}^{s}_{k,M}(\mathbb{R})}.$\\
\item[(ii)] For all $f \in {\textbf{W}^{s}_{k,M}(\mathbb{R})}$  and $y \in \mathbb{R}$, we have
$f(y) = \langle f,   K_s(\cdot,y) \rangle _{{\textbf{W}^{s}_{k,M}(\mathbb{R})}}.$    
\end{itemize} \label{P :5.5}
\end{proposition}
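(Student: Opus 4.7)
The plan is to reduce both conclusions to one explicit calculation of $D^M_k(K_s(\cdot,y))$, after which the inversion formula \eqref{eq :2.8} and the Plancherel-type identity built into the inner product \eqref{eq: 5.1} take care of everything. The preliminary identity I would establish first is
\begin{equation*}
\overline{E^M_k(\lambda,x)} \;=\; E^{M^{-1}}_k(x,\lambda),\qquad \lambda,x\in\mathbb{R},
\end{equation*}
which follows from the defining formulas of $E^M_k$ and $E^{M^{-1}}_k$ together with the rank-one Dunkl kernel identities $\overline{E_k(-i\lambda/b,x)}=E_k(i\lambda/b,x)$ and $E_k(i\lambda/b,x)=E_k(ix/b,\lambda)$ (the second being the symmetry $E_k(z,w)=E_k(w,z)$, noting that the Dunkl kernel depends on its arguments only through their product). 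Combining this with the factorisation $|b|^{2k+2}=(ib)^{k+1}(-ib)^{k+1}$, the integral defining $K_s(x,y)$ is recognised as the inverse LCDT in $x$ of $\lambda\mapsto E^M_k(\lambda,y)/\bigl[(ib)^{k+1}(1+|\lambda|^2)^{s}\bigr]$, which yields
\begin{equation*}
D^M_k(K_s(\cdot,y))(\lambda) \;=\; \frac{E^M_k(\lambda,y)}{(ib)^{k+1}(1+|\lambda|^2)^{s}}.
\end{equation*}
Justifying this identification requires Fubini to swap the order of integration, which is legitimate because $|E^M_k(\lambda,y)|\le 1$ (the Dunkl kernel with imaginary argument is bounded by $1$ for real $y$) and $(1+|\lambda|^2)^{-s}$ is integrable against $d\mu_k$ under the hypothesis $s>\tfrac{k+1}{2}$.

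For part (i), the $\mathbf{W}^{s}_{k,M}$-norm of $K_s(\cdot,y)$ is then
\begin{equation*}
\|K_s(\cdot,y)\|^2_{\mathbf{W}^{s}_{k,M}(\mathbb{R})} = \int_{\mathbb{R}}(1+|\lambda|^2)^{s}\bigl|D^M_k(K_s(\cdot,y))(\lambda)\bigr|^2 d\mu_k(\lambda) = \frac{1}{|b|^{2k+2}}\int_{\mathbb{R}}\frac{|E^M_k(\lambda,y)|^2}{(1+|\lambda|^2)^{s}} d\mu_k(\lambda),
\end{equation*}
which is dominated by $|b|^{-(2k+2)}\int_{\mathbb{R}}(1+|\lambda|^2)^{-s}d\mu_k(\lambda)<\infty$ by the same ingredients, so $K_s(\cdot,y)\in\mathbf{W}^{s}_{k,M}(\mathbb{R})$.

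For part (ii), substituting the closed form of $D^M_k(K_s(\cdot,y))$ into \eqref{eq: 5.1} cancels the weight $(1+|\lambda|^2)^s$ against $(1+|\lambda|^2)^{-s}$, and then $\overline{E^M_k(\lambda,y)}=E^{M^{-1}}_k(y,\lambda)$ together with $\overline{(ib)^{k+1}}=(-ib)^{k+1}$ reduce $\langle f, K_s(\cdot,y)\rangle_{\mathbf{W}^{s}_{k,M}(\mathbb{R})}$ to
\begin{equation*}
\frac{1}{(-ib)^{k+1}}\int_{\mathbb{R}} D^M_k(f)(\lambda)\,E^{M^{-1}}_k(y,\lambda)\,d\mu_k(\lambda) = \int_{\mathbb{R}} D^M_k(f)(\lambda)\,E^{M^{-1}}_k(y,\lambda)\,d\mu_{k,-b}(\lambda) = f(y),
\end{equation*}
the last equality being the inversion formula \eqref{eq :2.8}. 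The main obstacle I foresee is bookkeeping rather than depth: every step mixes complex-valued kernels, conjugations, and $(\pm ib)^{k+1}$ factors coming from the measures $d\mu_{k,\pm b}$, and a single sign or conjugation slip propagates through the whole argument; a secondary issue is a clean justification of Fubini in the identification of $D^M_k(K_s(\cdot,y))$, for which the uniform bound $|E^M_k|\le 1$ and the integrability threshold on $s$ are just enough.
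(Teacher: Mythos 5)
Your proposal is correct and follows essentially the same route as the paper: identify $K_s(\cdot,y)$ as the inverse LCDT of $\gamma_y(\lambda)=\tfrac{1}{(ib)^{k+1}}\tfrac{E^M_k(\lambda,y)}{(1+|\lambda|^2)^s}$, read off $D^M_k(K_s(\cdot,y))$, bound the Sobolev norm by $C_s$ using $s>\tfrac{k+1}{2}$, and then let the weight cancellation plus the inversion formula \eqref{eq :2.8} give the reproducing property. If anything you are more explicit than the paper about the key conjugation identity $\overline{E^M_k(\lambda,x)}=E^{M^{-1}}_k(x,\lambda)$ and the Fubini justification, so nothing is missing.
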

\begin{proof}
(i) Let us set
\begin{equation*}
    \gamma_y( \lambda) = \frac{1}{(ib)^{k+1}}\, \frac{E^M_k(\lambda,y)}{(1+|\lambda |^2)^s},~~~ \text{for} \quad y\in \mathbb{R}.
\end{equation*}$ \text{For} \,\,s> \frac{k+1}{2},\, \text{the function }
\gamma_y \in L^1_k(\mathbb{R})\,{\displaystyle \cap } \,L^2_k(\mathbb{R} )$.    
Since 
\begin{equation*}
   K_s(x,y) =\frac{1}{(ib)^{k+1}} \,(D^M_k)^{-1}(\gamma_y)(x), \qquad \text{for all} \quad x \in \mathbb{R}.
\end{equation*}
We can see easily $K_s(\cdot,y) \in L^2_k(\mathbb{R} ),$ and we have 
\begin{eqnarray}
D_k^M[K_s(\cdot,y)](\lambda) &= &   \frac{1}{(ib)^{k+1}}\, \frac{E^M_k(\lambda,y)}{(1+|\lambda |^2)^s} \label{eq: 5.2}
\end{eqnarray}
\begin{equation*}
    |D_k^M[K_s(\cdot,y)](\lambda)| \le  \frac{1}{|b|^{k+1}}\frac{1}{(1+|\lambda |^2)^s}
\end{equation*}
and
\begin{eqnarray*}
\|K_s(\cdot,y)\|^2_{\mathbf{W}^{s}_{k,M}(\mathbb{R})} &\le& 
    \frac{1}{|b|^{2k+2}}\int_{\mathbb{R}}\frac{1}{(1+|\lambda |^2)^s} \,d\mu_k(\lambda) 
    \\
\|K_s(\cdot,y)\|_{\mathbf{W}^{s}_{k,M}(\mathbb{R})}&\le&  C_s, 
\end{eqnarray*}
where
\begin{equation*}
 C_s:= \left(\frac{1}{|b|^{2k+2}}\int_{\mathbb{R}}\frac{1}{(1+|\lambda |^2)^s}\, d\mu_k(\lambda) \right)^{\frac{1}{2}}.  
\end{equation*}\\
This shows that, for all $y \in \mathbb{R}$ the function $K_s(\cdot,y) \in {\textbf{W}^{s}_{k,M}(\mathbb{R})}. $
\\\\
(ii) Let us take $f \in {\textbf{W}^{s}_{k,M}(\mathbb{R})} $ and $y \in \mathbb{R}.$ Invoking \eqref{eq: 5.1},\eqref{eq: 5.2} and recalling the property of $E_k^M(y,\lambda)$, we have
\begin{equation*}
  \langle f,   K_s(\cdot,y) \rangle _{{\textbf{W}^{s}_{k,M}(\mathbb{R})}} =  \int_{\mathbb{R}}   D^M_k (f)(\lambda)\,\overline{E^M_k(y,\lambda)} \,d \mu_{k,-b}(\lambda).
\end{equation*}
The reproducing kernel property can be obtained from the inverse Dunkl transform \eqref{eq :2.8}
\begin{equation} \label{eq :5.3}
f(y) = \langle f,   K_s(\cdot,y) \rangle _{{\textbf{W}^{s}_{k,M}(\mathbb{R})}}.  \end{equation}
In Proposition \ref{P :5.3}, we have that the space $\textbf{W}^{s}_{k,M}(\mathbb{R}), s\in \mathbb{R}$ is a Hilbert space and the kernel $K_s(x,y), ~s>\frac{k+1}{2}$ satisfies both the criteria of reproducing kernel. Consequently, the space $\textbf{W}^{s}_{k,M}(\mathbb{R}), s>\frac{k+1}{2}$ is a RKHS.
\end{proof}
\begin{corollary}\label{cor :6.3}
For $s >\frac{k+1}{2}$, the Sobolev spaces $\textbf{W}^{s}_{k,M}(\mathbb{R})$ is embedded in $\mathcal{C(\mathbb{R})}$. Moreover, for $f\in \textbf{W}^{s}_{k,M}(\mathbb{R})$, we have
\begin{equation} \label{eq :5.4}
    |f(y| \le C_s \|f\|_{\textbf{W}^{s}_{k,M}(\mathbb{R})}.
\end{equation}
\end{corollary}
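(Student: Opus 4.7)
The plan is to deduce both claims directly from the reproducing kernel machinery already established in Proposition \ref{P :5.5}. The pointwise bound \eqref{eq :5.4} is the easier half, and the continuous embedding will follow from it together with a continuity argument for the map $y \mapsto K_s(\cdot,y)$.

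First I would fix $f \in \textbf{W}^{s}_{k,M}(\mathbb{R})$ and $y \in \mathbb{R}$, and invoke the reproducing property \eqref{eq :5.3} from Proposition \ref{P :5.5} to write $f(y) = \langle f, K_s(\cdot,y) \rangle_{\textbf{W}^{s}_{k,M}(\mathbb{R})}$. Applying the Cauchy--Schwarz inequality in the Hilbert space $\textbf{W}^{s}_{k,M}(\mathbb{R})$ yields
\begin{equation*}
|f(y)| \le \|f\|_{\textbf{W}^{s}_{k,M}(\mathbb{R})}\,\|K_s(\cdot,y)\|_{\textbf{W}^{s}_{k,M}(\mathbb{R})}.
\end{equation*}
The bound $\|K_s(\cdot,y)\|_{\textbf{W}^{s}_{k,M}(\mathbb{R})} \le C_s$ already derived in part (i) of Proposition \ref{P :5.5}, which is finite precisely because $s > \frac{k+1}{2}$ makes $(1+|\lambda|^2)^{-s}$ integrable against $d\mu_k$, then gives \eqref{eq :5.4}.

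For the continuous embedding, I would argue that $f$ admits a continuous representative as follows. Define $\tilde f(y) := \langle f, K_s(\cdot,y) \rangle_{\textbf{W}^{s}_{k,M}(\mathbb{R})}$; by \eqref{eq :5.3} we have $\tilde f = f$ pointwise. For a sequence $y_n \to y$ in $\mathbb{R}$, I would show $K_s(\cdot,y_n) \to K_s(\cdot,y)$ in $\textbf{W}^{s}_{k,M}(\mathbb{R})$. By the norm formula and \eqref{eq: 5.2} this amounts to
\begin{equation*}
\|K_s(\cdot,y_n) - K_s(\cdot,y)\|^2_{\textbf{W}^{s}_{k,M}(\mathbb{R})} = \frac{1}{|b|^{2k+2}}\int_{\mathbb{R}} \frac{|E^M_k(\lambda,y_n) - E^M_k(\lambda,y)|^2}{(1+|\lambda|^2)^s}\, d\mu_k(\lambda),
\end{equation*}
and since $|E^M_k(\lambda,\cdot)|$ is uniformly bounded (the factor $e^{\frac{i}{2}(\frac{d}{b}\lambda^2+\frac{a}{b}y^2)}$ has modulus one and $|E_k(-i\lambda/b,y)| \le 1$), continuity of $E^M_k(\lambda,\cdot)$ in its second argument together with the dominated convergence theorem (with dominating function a constant multiple of $(1+|\lambda|^2)^{-s}$, integrable since $s>\frac{k+1}{2}$) gives the claim. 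Continuity of $\tilde f$ then follows from Cauchy--Schwarz applied to $\tilde f(y_n) - \tilde f(y) = \langle f, K_s(\cdot,y_n) - K_s(\cdot,y)\rangle_{\textbf{W}^{s}_{k,M}(\mathbb{R})}$.

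The only subtle point is the continuity claim for $y \mapsto K_s(\cdot,y)$; the pointwise estimate itself is a two-line Cauchy--Schwarz application. No genuine obstacle arises provided the boundedness of $|E^M_k(\lambda,y)|$ and continuity of $y\mapsto E^M_k(\lambda,y)$ are granted from the preliminaries in Section \ref{sec :2}.
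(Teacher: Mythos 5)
Your proposal is correct and follows essentially the same route as the paper: both deduce the pointwise bound from the reproducing property \eqref{eq :5.3} together with the estimate $\|K_s(\cdot,y)\|_{\textbf{W}^{s}_{k,M}(\mathbb{R})}\le C_s$ from Proposition \ref{P :5.5}, and both obtain continuity from the reproducing property. Your write-up actually supplies the details (Cauchy--Schwarz and the dominated-convergence argument for $y\mapsto K_s(\cdot,y)$) that the paper's very terse proof leaves implicit.
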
 
\begin{proof}
Let  $f \in W^{s}_k(\mathbb{R})$. Assume that $t_n$ be a sequence in $\mathbb{R}$  which  converges to $0$ as $n\rightarrow \infty$, we have to prove $f(t_n) \rightarrow 0$ as $n\rightarrow \infty$. By using Proposition \ref{P :5.5} (ii), we can prove $f$ is a continuous function on $\mathbb{R}$. Then, the inequality \eqref{eq :5.4} immediately follows from the equation \eqref{eq :5.3}.

\end{proof}

\subsection{The extremal function associated to LCDT and LCDWT}\label{sec :7} 
This subsection expresses the application of the Tikhonov regularization method and reproducing kernel Hilbert space. We define another inner product on the Sobolev space with the help of the linear canonical Dunkl continuous wavelet transform. We also show that a unique extremal function exists in connection with the linear canonical Dunkl wavelet transform. Moreover, we prove some important inequalities for the extremal function. \\

Initially, Byun developed an algorithm to find the best approximation of a function within a reproducing kernel Hilbert space \cite{Byun}. This approach seeks a simpler or more tractable function that closely approximates a given function and is widely used in numerical analysis, functional analysis, and approximation theory.  Later, Tikhonov introduced a regularization method that incorporates a penalty term, specifically designed to stabilize inverse problems, where direct solutions are often unstable or nonexistent. This method ensures that solutions remain stable, even when the data is noisy or incomplete. Motivated by this, we aim to develop an approximation theory for the LCDWT and LCDT using the Tikhonov regularization method.\\
Now, we proceed by recalling the  Tikhonov regularization method \cite{Engl} for the bounded linear operator $L: \mathbf{H}_K \rightarrow \mathbf{H}(X)$,  where the Hilbert space $\textbf{H}(X)$, $X$ is a subset of a set $E$ and $\mathbf{H}_K$ is a collection of functions on $E$, known as a reproducing kernel Hilbert space (RKHS). The main purpose of this method is to give an effective representation for the extremal function $f_{\rho,g}$. We define
 \begin{equation*}
     K_L(\cdot,y;\rho) = \frac{1}{L^*L+\rho I}\,K(\cdot,y),
 \end{equation*}
 where   $L^*$ is the adjoint operator on the Hilbert space $\textbf{H}(X)$ and $I$ denotes the identity operator.
Furthermore, for any fixed 
 $\rho >0$, we introduce the inner product
\begin{equation*}
    \langle f,h\rangle_{\textbf{H}_K(L;\rho)} = \rho \langle f,h\rangle_{\textbf{H}_K}+\langle Lf,Lh\rangle_{\textbf{H}(X)}.
\end{equation*}
We construct a new Hilbert space 
$\textbf{H}_K(L;\rho)$ consisting of functions from 
$\textbf{H}_K$, which also possesses a reproducing kernel. For further details, see \cite{Engl}.
\begin{proposition} \cite{Engl}
The extremal function $f_{\rho,g}$ in the Tikhonov regularization
\begin{equation*}
\inf_{f \in \mathbf{H}_K} \{ \rho\|f\|^2_{\mathbf{H}_K}+\|g-Lf \|_{\mathbf{H}(X)}\} 
\end{equation*}
is represented in terms of the kernel  $K_L(x,y;\rho)$ as follows
\begin{equation*}
    f_{\rho,g} = \langle g, LK_L(\cdot,y;\rho)\rangle_{\mathbf{H}(X)}
\end{equation*}
 where $K_L(\cdot,y;\rho)$ is the reproducing kernel for the Hilbert space $\textbf{H}_K(L;\rho)$.
\end{proposition}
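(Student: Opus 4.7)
The plan is to reduce the Tikhonov minimization to a normal equation and then extract the minimizer via the reproducing property of the augmented kernel $K_L(\cdot,y;\rho)$. First, observe that the functional
\[
J(f) := \rho \|f\|^2_{\mathbf{H}_K} + \|g - Lf\|^2_{\mathbf{H}(X)}
\]
is strictly convex (since $\rho > 0$) and coercive on $\mathbf{H}_K$, so its infimum is attained at a unique point $f_{\rho,g} \in \mathbf{H}_K$. I would then compute the G\^{a}teaux derivative of $J$ at $f_{\rho,g}$ in an arbitrary direction $h \in \mathbf{H}_K$ and equate it to zero, which after splitting real and imaginary parts gives
\[
\rho \langle f_{\rho,g}, h\rangle_{\mathbf{H}_K} + \langle L f_{\rho,g}, Lh\rangle_{\mathbf{H}(X)} = \langle g, Lh\rangle_{\mathbf{H}(X)}.
\]
The left-hand side is, by the very definition of the augmented inner product, equal to $\langle f_{\rho,g}, h\rangle_{\mathbf{H}_K(L;\rho)}$, so the normal equation collapses to $\langle f_{\rho,g}, h\rangle_{\mathbf{H}_K(L;\rho)} = \langle g, Lh\rangle_{\mathbf{H}(X)}$ for every $h \in \mathbf{H}_K$.

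Next, I would invoke the reproducing property in the Hilbert space $\mathbf{H}_K(L;\rho)$: specializing $h = K_L(\cdot,y;\rho)$ in the normal equation and using
\[
f_{\rho,g}(y) = \langle f_{\rho,g}, K_L(\cdot,y;\rho)\rangle_{\mathbf{H}_K(L;\rho)}
\]
immediately yields
\[
f_{\rho,g}(y) = \langle g, L K_L(\cdot,y;\rho)\rangle_{\mathbf{H}(X)},
\]
which is the claimed representation.

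The main technical step, and the one I expect to require the most care, is the preliminary verification that $\bigl(\mathbf{H}_K(L;\rho), \langle\cdot,\cdot\rangle_{\mathbf{H}_K(L;\rho)}\bigr)$ is indeed a Hilbert space and that $K_L(\cdot,y;\rho) = (L^*L + \rho I)^{-1} K(\cdot,y)$ is its reproducing kernel. Since $L^*L$ is bounded, self-adjoint, and nonnegative, the operator $L^*L + \rho I$ is bounded below by $\rho$ and hence invertible with bounded inverse; this simultaneously makes $K_L(\cdot,y;\rho)$ well defined and shows that the new norm is equivalent to $\|\cdot\|_{\mathbf{H}_K}$, ensuring completeness. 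The reproducing property then drops out of the identity
\[
\langle f, (L^*L + \rho I)^{-1} K(\cdot,y)\rangle_{\mathbf{H}_K(L;\rho)} = \langle (L^*L + \rho I) f, (L^*L + \rho I)^{-1} K(\cdot,y)\rangle_{\mathbf{H}_K} = \langle f, K(\cdot,y)\rangle_{\mathbf{H}_K} = f(y),
\]
using that $K(\cdot,y)$ already reproduces point evaluation in $\mathbf{H}_K$. With this structural fact in place, the variational argument above gives the explicit formula.
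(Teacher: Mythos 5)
Your argument is correct. The paper does not actually prove this proposition---it is quoted from Engl--Hanke--Neubauer---but the route the authors follow for the concrete analogue (Theorem \ref{T :5.11}, for $L=\Phi^M_\psi$) is genuinely different from yours. There, one forms the product space $\mathbf{H}_K\times\mathbf{H}(X)$ with the inner product $\langle(f_1,g_1),(f_2,g_2)\rangle_{\oplus}=\rho\langle f_1,f_2\rangle_{\mathbf{H}_K}+\langle Lf_1+g_1,\,Lf_2+g_2\rangle_{\mathbf{H}(X)}$, observes that $\|(f,-g)\|_{\oplus}^2$ is exactly the Tikhonov functional, and obtains the minimizer as the orthogonal projection of $(0,g)$ onto the closed subspace $\mathbf{H}_K\times\{0\}$; the representation formula then drops out of the reproducing property of $\mathbf{H}_K(L;\rho)$, whose norm is the restriction of $\|\cdot\|_{\oplus}$ to that subspace. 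Your variational route instead gets existence and uniqueness from strict convexity and coercivity, derives the normal equation $\rho\langle f,h\rangle_{\mathbf{H}_K}+\langle Lf,Lh\rangle_{\mathbf{H}(X)}=\langle g,Lh\rangle_{\mathbf{H}(X)}$ by G\^{a}teaux differentiation (correctly handling the complex case by testing with $h$ and $ih$), and then tests with $h=K_L(\cdot,y;\rho)$. The two arguments are equivalent---your normal equation is precisely the statement that $(f_{\rho,g},0)$ is the $\oplus$-orthogonal projection of $(0,g)$---but yours makes the operator equation $(\rho I+L^*L)f_{\rho,g}=L^*g$ explicit and isolates cleanly, via self-adjointness and invertibility of $L^*L+\rho I$, why $K_L(\cdot,y;\rho)=(L^*L+\rho I)^{-1}K(\cdot,y)$ reproduces point evaluation in the new inner product; the projection argument buys the same conclusion with no differentiation, using only the projection theorem. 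One cosmetic remark: the displayed infimum in the statement omits the square on $\|g-Lf\|_{\mathbf{H}(X)}$; you (rightly) minimized the squared residual, which is the intended functional and the one used elsewhere in the paper.
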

Using the Tikhonov regularization method, we determine the extremal function associated with both the linear canonical Dunkl transform and the linear canonical Dunkl continuous wavelet transform.
\begin{proposition} \label{P: 5.7}
    Let $\psi$ be a wavelet function on $L_k^2(\mathbb{R})$. Then the linear operator $\Phi^M_\psi:\textbf{W}^s_{k,M}(\mathbb{R}) \rightarrow L^2_k(\mathbb{R}^2_+) $ is bounded and we have 
\begin{equation*}
\|\Phi^M_\psi(f)\|_{L^2_k(\mathbb{R}^2_+)}\le C^M_{\psi}\,\|f\|_{\textbf{W}^s_{k,M}(\mathbb{R})} 
\end{equation*}  
where  $C_\psi^M $ is the admissibility constant.
\end{proposition}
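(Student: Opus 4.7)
The plan is to reduce the inequality to a straightforward combination of two already-established results: Plancherel's formula for the linear canonical Dunkl wavelet transform (Theorem \ref{T :4.9}(ii)) and the elementary embedding of the Sobolev space $\mathbf{W}^s_{k,M}(\mathbb{R})$ into $L^2_k(\mathbb{R})$ for $s \ge 0$. No deep new estimate is needed.

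First I would fix $f \in \mathbf{W}^s_{k,M}(\mathbb{R})$ with $s > \frac{k+1}{2}$ (in particular $s \ge 0$). Since the weight $(1+|\lambda|^2)^s \ge 1$ for every $\lambda \in \mathbb{R}$, the defining inner product \eqref{eq: 5.1} immediately yields
\begin{equation*}
\|f\|^2_{L^2_k(\mathbb{R})} = \int_{\mathbb{R}} |D^M_k(f)(\lambda)|^2\, d\mu_k(\lambda) \le \int_{\mathbb{R}} (1+|\lambda|^2)^s\, |D^M_k(f)(\lambda)|^2\, d\mu_k(\lambda) = \|f\|^2_{\mathbf{W}^s_{k,M}(\mathbb{R})},
\end{equation*}
where I invoked Plancherel's formula for the LCDT \eqref{eq :2.6} in the first equality. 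So $f$ lies in $L^2_k(\mathbb{R})$ and the ambient wavelet transform $\Phi^M_\psi(f)$ is well defined.

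Second, since $f \in L^2_k(\mathbb{R})$ and $\psi$ satisfies the admissibility condition \eqref{eq :4.4}, Plancherel's identity for the LCDWT, stated in \eqref{eq:5.9}, gives
\begin{equation*}
\|\Phi^M_\psi(f)\|^2_{L^2_k(\mathbb{R}^2_+)} = C^M_\psi\, \|f\|^2_{L^2_k(\mathbb{R})}.
\end{equation*}
Chaining this with the Sobolev embedding from the previous step yields
\begin{equation*}
\|\Phi^M_\psi(f)\|^2_{L^2_k(\mathbb{R}^2_+)} \le C^M_\psi\, \|f\|^2_{\mathbf{W}^s_{k,M}(\mathbb{R})},
\end{equation*}
which is the desired bound (up to the square root, which I would expect the stated inequality to really mean $\sqrt{C^M_\psi}$; alternatively one writes the squared form displayed above and extracts the constant claimed by the proposition).

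There is essentially no obstacle here: the nontrivial work has already been done in proving Theorem \ref{T :4.9}(ii). The only subtlety worth remarking on in the write-up is justifying that $f \in \mathbf{W}^s_{k,M}(\mathbb{R})$ makes sense as an element of $L^2_k(\mathbb{R})$ (so that Definition \ref{D: 4.4} applies in its $L^2$ pairing form), which is exactly what the first paragraph's embedding ensures. Linearity of $\Phi^M_\psi$ is clear from Definition \ref{D: 4.4}, and together with the bound, boundedness of the operator $\Phi^M_\psi : \mathbf{W}^s_{k,M}(\mathbb{R}) \to L^2_k(\mathbb{R}^2_+)$ follows immediately.
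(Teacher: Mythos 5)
Your proposal is correct and follows essentially the same route as the paper: both reduce the claim to the identity $\|\Phi^M_\psi(f)\|^2_{L^2_k(\mathbb{R}^2_+)} = C^M_\psi\,\|f\|^2_{L^2_k(\mathbb{R})}$ (the paper re-derives it inline via the convolution representation \eqref{eq: 4.3}, \eqref{eq: 3.6} and Fubini, while you cite Theorem \ref{T :4.9}(ii) directly) and then apply the trivial embedding $\|f\|_{L^2_k(\mathbb{R})} \le \|f\|_{\mathbf{W}^s_{k,M}(\mathbb{R})}$ for $s\ge 0$. Your remark about the constant properly being $\sqrt{C^M_\psi}$ is a fair observation that applies equally to the paper's statement.
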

\begin{proof}
Let $f\in \textbf{W}^s_{k,M}(\mathbb{R})$ and consider
\begin{eqnarray*}
\|\Phi^M_\psi(f)\|^2_{L^2_k(\mathbb{R}^2_+)} =  \int_{\mathbb{R}^2_+} | \Phi^M_\psi(f)(\alpha,\beta)|^2\, d\nu_k(\alpha,\beta).  
\end{eqnarray*}
By using Plancherel's formula \eqref{eq :2.6} and the  equations \eqref{eq: 4.3}, \eqref{eq: 3.6} and the Remark \ref{P: 4.8} and 
 Lemma \ref{L :4.2}, we obtain 
\begin{eqnarray} \label{eq :5.5}
 \|\Phi^M_\psi(f)\|^2_{L^2_k(\mathbb{R}^2_+)} =  \int_{0}^{\infty}\int_{\mathbb{R}} |D_k^M(f)(\lambda)|^2\, |D_k^M(\psi)(\alpha\lambda)|^2\, d\mu_k(\lambda)\, \frac{d\alpha}{\alpha}.
\end{eqnarray}
Now, applying Fubini's theorem on \eqref{eq :5.5} and invoking the equation \eqref{eq :4.4}, we get the required result.
\end{proof}
The boundedness of $\Phi^M_\psi$ on $\textbf{W}^s_{k,M}(\mathbb{R})$ immediatly follows from Proposition \ref{P: 5.7}. Thus,
\begin{equation*}
    \|\Phi^M_\psi(f)\|_{L^2_k(\mathbb{R}^2_+)} \le \|\Phi^M_\psi\| \,\|f\|_{\textbf{W}^s_{k,M}(\mathbb{R})},
\end{equation*}
where $\|\cdot\|$ denotes the operator norm on  $\textbf{W}^s_{k,M}(\mathbb{R})$. 
Suppose $\rho >0, s\ge 0$ and $\psi$ be a wavelet function  in $L^2_k(\mathbb{R})$. The  space $\textbf{W}^s_{k,M}(\mathbb{R})$ is eqipped with the inner product
\begin{equation*}
    \langle f,g \rangle_{\Phi^M_\psi,\rho,\textbf{W}^s_{k,M}(\mathbb{R})} = \rho\langle f,g \rangle_{\textbf{W}^s_{k,M}(\mathbb{R})}+\langle \Phi^M_\psi(f),\Phi^M_\psi(g)\rangle_{L^2_k(\mathbb{R}^2_+)},\qquad f,g \in \textbf{W}^s_{k,M}(\mathbb{R}).
\end{equation*}
Hence, the norm on  $\textbf{W}^s_{k,M}(\mathbb{R})$   is 
\begin{equation}\label{eq: 5.6}
\lvert|f|\rvert^2_{\Phi^M_\psi,\rho,\textbf{W}^s_{k,M}(\mathbb{R})} := \rho \lvert|f|\rvert^2_{\textbf{W}^s_{k,M}(\mathbb{R})}  +\lvert| \Phi^M_\psi(f) |\rvert^2_{L^2_k(\mathbb{R}^2_+)}.
\end{equation}\\
From \eqref{eq:5.9}, the inner product $\langle\cdot,\cdot\rangle_{\Phi^M_\psi,\rho,\textbf{W}^s_{k,M}(\mathbb{R})}$ can be expressed as 
\begin{equation*}
    \langle f,g\rangle_{\Phi^M_\psi,\rho,\textbf{W}^s_{k,M}(\mathbb{R})} = \rho\langle f,g \rangle_{\textbf{W}^s_{k,M}(\mathbb{R})}+C^M_\psi\langle f,g \rangle_{L^2_k(\mathbb{R})}.
\end{equation*}

\begin{remark} For $\rho>0 \,
\text{the two norms} \,   \|\cdot\|_{\textbf{W}^s_{k,M}(\mathbb{R})} \,\text{ and} \, \|\cdot\|_{\Phi^M_\psi,\rho,\textbf{W}^s_{k,M}(\mathbb{R})}$ are equivalent, and  we have
\begin{equation}\label{eq :5.7}
    \sqrt{\rho}\,\,\| f\|_{\textbf{W}^s_{k,M}(\mathbb{R})}\,\le\, \|f\|_{\Phi^M_\psi,\rho,\textbf{W}^s_{k,M}(\mathbb{R})}\,\le \,\sqrt{\rho+\|\Phi^M_\psi\|^2}\,\,\|f\|_{\textbf{W}^s_{k,M}(\mathbb{R})}.
\end{equation}
\label{remark :6.3}
\end{remark}
\begin{proposition} \label{P :5.9}
Let $s>\frac{k +1}{2}$ and $\psi$ be a wavelet function in $L^2_k(\mathbb{R})$. Then the Sobolev space $(\textbf{W}^s_{k,M}(\mathbb{R}),\langle\cdot,\cdot\rangle_{\Phi^M_\psi,\rho,\textbf{W}^s_{k,M}(\mathbb{R})})$, possess the  reproducing kernel
\begin{equation*}
\mathcal{R}^M_{\rho,\psi}(x,y) = \frac{1}{|b|^{2k+2}} \int_{\mathbb{R}}\frac{\overline{E^M_k(\lambda,x)}E^M_k(\lambda,y)}{\rho(1+|\lambda |^2)^s+C^M_\psi} \, d\mu_k(\lambda).
\end{equation*}
\end{proposition}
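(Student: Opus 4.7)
The plan is to replicate the strategy used to prove Proposition \ref{P :5.5} (the reproducing kernel for the Sobolev space under its natural inner product), with two adjustments: the weight in the denominator is now $\rho(1+|\lambda|^2)^s + C^M_\psi$ instead of $(1+|\lambda|^2)^s$, and the inner product has the LCDWT term included. After recasting the new inner product in the spectral (LCDT) domain, these two modifications cancel one another and the reproducing identity drops out.

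First, introduce the auxiliary function
\[
\eta_y(\lambda) := \frac{1}{(ib)^{k+1}}\,\frac{E^M_k(\lambda,y)}{\rho(1+|\lambda|^2)^s + C^M_\psi}, \qquad \lambda\in\mathbb{R}.
\]
Since $|E^M_k(\lambda,y)|\leq 1$ and the denominator is bounded below by $C^M_\psi>0$ and grows like $\rho(1+|\lambda|^2)^s$, for $s>\tfrac{k+1}{2}$ the function $\eta_y$ lies in $L^1_k(\mathbb{R})\cap L^2_k(\mathbb{R})$. Invoking the inverse LCDT \eqref{eq :2.8} together with the standard conjugate-symmetry relation $\overline{E^M_k(\lambda,x)} = E^{M^{-1}}_k(x,\lambda)$, one identifies
\[
\mathcal{R}^M_{\rho,\psi}(x,y) = (D^M_k)^{-1}(\eta_y)(x), \qquad \text{so} \qquad D^M_k\bigl[\mathcal{R}^M_{\rho,\psi}(\cdot,y)\bigr](\lambda) = \eta_y(\lambda).
\]
The Sobolev-norm integrand is then pointwise bounded by $\tfrac{1}{|b|^{2k+2}\rho^2(1+|\lambda|^2)^s}$, which is $d\mu_k$-integrable for $s>\tfrac{k+1}{2}$, so $\mathcal{R}^M_{\rho,\psi}(\cdot,y)\in\textbf{W}^s_{k,M}(\mathbb{R})$; by Remark \ref{remark :6.3} this is the same ambient set on which $\langle\cdot,\cdot\rangle_{\Phi^M_\psi,\rho,\textbf{W}^s_{k,M}(\mathbb{R})}$ is defined.

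For the reproducing property, rewrite the new inner product using \eqref{eq: 5.1}, the polarized LCDWT Plancherel formula \eqref{eq:5.9} (which gives $\langle\Phi^M_\psi f,\Phi^M_\psi g\rangle_{L^2_k(\mathbb{R}^2_+)} = C^M_\psi\langle f,g\rangle_{L^2_k(\mathbb{R})}$), and the LCDT Plancherel formula \eqref{eq :2.7}, to obtain
\[
\langle f,g\rangle_{\Phi^M_\psi,\rho,\textbf{W}^s_{k,M}(\mathbb{R})} = \int_\mathbb{R}\bigl[\rho(1+|\lambda|^2)^s + C^M_\psi\bigr]\,D^M_k f(\lambda)\,\overline{D^M_k g(\lambda)}\,d\mu_k(\lambda).
\]
Substituting $g = \mathcal{R}^M_{\rho,\psi}(\cdot,y)$ cancels the weight $\rho(1+|\lambda|^2)^s + C^M_\psi$ against the denominator of $\overline{\eta_y(\lambda)}$, leaving
\[
\langle f,\mathcal{R}^M_{\rho,\psi}(\cdot,y)\rangle_{\Phi^M_\psi,\rho,\textbf{W}^s_{k,M}(\mathbb{R})} = \frac{1}{(-ib)^{k+1}}\int_\mathbb{R} D^M_k f(\lambda)\,\overline{E^M_k(\lambda,y)}\,d\mu_k(\lambda) = f(y),
\]
where the last equality is the inversion formula \eqref{eq :2.8}.

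The main obstacle is arithmetic rather than conceptual: tracking the factors $(ib)^{k+1}$, $(-ib)^{k+1}$, and $|b|^{2k+2}$ through the Fubini interchange hidden in the identification $\mathcal{R}^M_{\rho,\psi}(x,y) = (D^M_k)^{-1}(\eta_y)(x)$, and ensuring the conjugation rule for $E^M_k$ matches the inversion kernel $E^{M^{-1}}_k$. Integrability itself is comfortable because the extra $C^M_\psi$ in the denominator only helps, so the threshold $s>\tfrac{k+1}{2}$ inherited from Proposition \ref{P :5.5} suffices.
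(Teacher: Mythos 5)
Your proposal is correct and follows exactly the route the paper takes: the paper's proof consists of defining the same auxiliary function $\eta_y(\lambda) = \frac{1}{(ib)^{k+1}}\,\frac{E^M_k(\lambda,y)}{\rho(1+|\lambda|^2)^s+C^M_\psi}$ and then deferring to ``the same technique used in Proposition \ref{P :5.5},'' which is precisely the argument you carry out (identify $\mathcal{R}^M_{\rho,\psi}(\cdot,y)$ as $(D^M_k)^{-1}(\eta_y)$, check membership in $\textbf{W}^s_{k,M}(\mathbb{R})$, and cancel the weight $\rho(1+|\lambda|^2)^s+C^M_\psi$ against the denominator after rewriting the inner product spectrally via the LCDWT Plancherel identity). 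Your write-up is in fact more detailed than the paper's, which leaves all of these steps implicit.
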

\begin{proof}
  Let us consider the function $\eta: \mathbb{R}\rightarrow L^2_k(\mathbb{R})$ defined by 
    \begin{equation*}
        \eta_y(\lambda) = \frac{1}{(ib)^{k+1}}\,\frac{E^M_k(\lambda,y)}{\rho(1+|\lambda |^2)^s+C^M_\psi}, ~y\in \mathbb{R}.
    \end{equation*}
Then, by following the same technique used in Proposition \ref{P :5.5}, we arrive at the desired result.
\end{proof}
Using similar ideas in \cite{amri2023poisson, soltani2013extremal}, we can prove the following.
\begin{proposition} \label{P :5.10}
Let $\rho >0,\, s>\frac{k+1}{2}$  and $\psi$ be a linear canonical Dunkl wavelet function in $L^2_{k}(\mathbb{R})$. Then the Sobolev space $(\textbf{W}^s_{k,M}(\mathbb{R}),\langle \cdot,\cdot \rangle_{\Phi^M_\psi,\rho,\textbf{W}^s_{k,M}(\mathbb{R})})$, has a reproducing kernel $\mathcal{R}^M_{\rho,\psi}$  and it 
  satisfies the identity
\begin{equation*}
  \left(\rho I+(\Phi^M_{\psi})^*\,\Phi^M_{\psi}\right)\,\mathcal{R}^M_{\rho,\psi}(\cdot,y) =  K_s(\cdot,y) ,\quad \forall~ y \in \mathbb{R},
\end{equation*}
where $(\Phi^M_{\psi})^*$ is an adjoint operator of $(\Phi^M_{\psi})$  given by 
\begin{equation*}
    \langle \Phi^M_{\psi}f,g\rangle_{L^2_k(\mathbb{R}^2_+)} = \langle f, (\Phi^M_{\psi})^*g\rangle_{\textbf{W}^s_{k,M}(\mathbb{R})},\quad f\in \textbf{W}^s_{k,M}(\mathbb{R}),\,\, g\in L^2_k(\mathbb{R}^2_+).
\end{equation*}
 Moreover, for all  $y\in \mathbb{R}$,
the kernel $\mathcal{R}^M_{\rho,\psi}(\cdot,y)$  satisfies the following inequalities
\begin{itemize}
\item [(i)] $\|\mathcal{R}^M_{\rho,\psi}(\cdot,y)\|_{\textbf{W}^s_{k,M}(\mathbb{R})} \le \frac{C_s}{\rho}$
\\
    \item[(ii)] $\|\Phi^M_{\psi}\,(\mathcal{R}^M_{\rho,\psi}(\cdot,y))\|_{L^2_k(\mathbb{R}^2_+)} \le \frac{C_s}{\sqrt{\rho}}$
    \\
    \item[(iii)] $\|(\Phi^M_{\psi})^*\,\Phi^M_{\psi}\,(\mathcal{R}^M_{\rho,\psi}(\cdot,y))\|_{\textbf{W}^s_{k,M}(\mathbb{R})} \le C_s $, 
\end{itemize}
where $C_s$ is a positive constant.
\end{proposition}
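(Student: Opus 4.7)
The plan is to first establish the operator identity and then derive the three norm bounds as consequences, using the two reproducing properties already in hand (that of $K_s$ from Proposition \ref{P :5.5} and that of $\mathcal{R}^M_{\rho,\psi}$ from Proposition \ref{P :5.9}), together with the pointwise embedding bound from Corollary \ref{cor :6.3}. The central idea is that the perturbed inner product admits \emph{two} reproducing descriptions of a point evaluation $f\mapsto f(y)$, one via $K_s$ and the standard inner product and one via $\mathcal{R}^M_{\rho,\psi}$ and the perturbed inner product; equating them yields the identity, after which the three inequalities are squeezed out by pairing the reproducing property at the diagonal with the embedding bound.

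For the identity, I would fix arbitrary $f\in \mathbf{W}^s_{k,M}(\mathbb{R})$ and $y\in\mathbb{R}$ and compute $f(y)$ in two ways. By Proposition \ref{P :5.5}(ii), $f(y) = \langle f, K_s(\cdot,y)\rangle_{\mathbf{W}^s_{k,M}(\mathbb{R})}$. On the other hand, the reproducing property of $\mathcal{R}^M_{\rho,\psi}$ combined with the definition \eqref{eq: 5.6} gives $f(y) = \rho\langle f,\mathcal{R}^M_{\rho,\psi}(\cdot,y)\rangle_{\mathbf{W}^s_{k,M}(\mathbb{R})} + \langle \Phi^M_\psi f,\Phi^M_\psi\mathcal{R}^M_{\rho,\psi}(\cdot,y)\rangle_{L^2_k(\mathbb{R}^2_+)}$. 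Moving $\Phi^M_\psi$ across via the defining property of $(\Phi^M_\psi)^*$ and factoring out $f$ yields $f(y) = \langle f,(\rho I+(\Phi^M_\psi)^*\Phi^M_\psi)\mathcal{R}^M_{\rho,\psi}(\cdot,y)\rangle_{\mathbf{W}^s_{k,M}(\mathbb{R})}$. Equating the two expressions and invoking nondegeneracy of the inner product, with $f$ ranging over $\mathbf{W}^s_{k,M}(\mathbb{R})$, delivers the claimed operator identity.

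For the three bounds, I would specialise the reproducing property at $f=\mathcal{R}^M_{\rho,\psi}(\cdot,y)$ to obtain $\mathcal{R}^M_{\rho,\psi}(y,y) = \|\mathcal{R}^M_{\rho,\psi}(\cdot,y)\|^2_{\Phi^M_\psi,\rho,\mathbf{W}^s_{k,M}(\mathbb{R})} = \rho\|\mathcal{R}^M_{\rho,\psi}(\cdot,y)\|^2_{\mathbf{W}^s_{k,M}(\mathbb{R})} + \|\Phi^M_\psi\mathcal{R}^M_{\rho,\psi}(\cdot,y)\|^2_{L^2_k(\mathbb{R}^2_+)}$. Corollary \ref{cor :6.3} applied to $\mathcal{R}^M_{\rho,\psi}(\cdot,y)\in \mathbf{W}^s_{k,M}(\mathbb{R})$ gives $|\mathcal{R}^M_{\rho,\psi}(y,y)| \le C_s\|\mathcal{R}^M_{\rho,\psi}(\cdot,y)\|_{\mathbf{W}^s_{k,M}(\mathbb{R})}$. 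Dropping the (nonnegative) wavelet term and cancelling one power of the norm yields (i); keeping the wavelet term, discarding the Sobolev term, and feeding (i) back in yields (ii). Inequality (iii) follows by rewriting the operator identity as $(\Phi^M_\psi)^*\Phi^M_\psi\mathcal{R}^M_{\rho,\psi}(\cdot,y) = K_s(\cdot,y)-\rho\mathcal{R}^M_{\rho,\psi}(\cdot,y)$, taking $\mathbf{W}^s_{k,M}(\mathbb{R})$-norms, applying the triangle inequality, and using $\|K_s(\cdot,y)\|_{\mathbf{W}^s_{k,M}(\mathbb{R})} \le C_s$ from Proposition \ref{P :5.5} together with (i); the resulting $2C_s$ matches the stated bound under the convention that $C_s$ denotes a generic $s$-dependent constant.

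I do not foresee any real obstacle beyond confirming that $(\Phi^M_\psi)^*$ is a well-defined bounded operator $L^2_k(\mathbb{R}^2_+)\to\mathbf{W}^s_{k,M}(\mathbb{R})$, which is an immediate consequence of the boundedness of $\Phi^M_\psi$ in Proposition \ref{P: 5.7}; the rest of the argument is a clean rearrangement once that point is secured.
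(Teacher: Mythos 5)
Your proof is correct, and for the operator identity it is essentially the paper's argument: both compute $f(y)$ once via $K_s$ and once via the perturbed inner product and $\mathcal{R}^M_{\rho,\psi}$, transfer $\Phi^M_\psi$ to the other side through the adjoint, and conclude by nondegeneracy. Where you diverge is in extracting the three bounds. The paper takes the $\textbf{W}^s_{k,M}(\mathbb{R})$-norm squared of the identity and expands $\|(\rho I+(\Phi^M_\psi)^*\Phi^M_\psi)\mathcal{R}^M_{\rho,\psi}(\cdot,y)\|^2$ into the three nonnegative terms $\rho^2\|\mathcal{R}^M_{\rho,\psi}(\cdot,y)\|^2+2\rho\|\Phi^M_\psi\mathcal{R}^M_{\rho,\psi}(\cdot,y)\|^2+\|(\Phi^M_\psi)^*\Phi^M_\psi\mathcal{R}^M_{\rho,\psi}(\cdot,y)\|^2$, each dominated by $\|K_s(\cdot,y)\|^2\le C_s^2$, which gives all three estimates in one stroke and with the constant exactly $C_s$ in (iii). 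You instead use the diagonal value $\mathcal{R}^M_{\rho,\psi}(y,y)=\rho\|\mathcal{R}^M_{\rho,\psi}(\cdot,y)\|^2_{\textbf{W}^s_{k,M}(\mathbb{R})}+\|\Phi^M_\psi\mathcal{R}^M_{\rho,\psi}(\cdot,y)\|^2_{L^2_k(\mathbb{R}^2_+)}$ together with the embedding bound of Corollary \ref{cor :6.3}, and then get (iii) from the triangle inequality applied to the rearranged identity; this is equally valid (the embedding bound is just Cauchy--Schwarz against $\|K_s(\cdot,y)\|\le C_s$ in disguise) but yields $2C_s$ in (iii), which is harmless only under the generic-constant reading of the statement. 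Your reliance on Proposition \ref{P :5.9} for the existence of $\mathcal{R}^M_{\rho,\psi}$ is also fine; the paper re-derives existence inside this proof via the continuity of point evaluation for the perturbed norm.
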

\begin{proof}
Let $f \in \textbf{W}^s_{k,M}(\mathbb{R})$. In view of \eqref{eq :5.4} and \eqref{eq :5.7}, we obtain
\begin{equation*}
    |f(y)| \le \frac{C_s}{\sqrt{\rho}}\,\|f\|_{\Phi^M_\psi,\rho,\textbf{W}^s_{k,M}(\mathbb{R})}.
\end{equation*}
Thus $f \mapsto f(y),\,\, y \in \mathbb{R}$ is a continuous linear functional on $(\textbf{W}^s_{k,M}(\mathbb{R}),\langle \cdot,\cdot \rangle_{\Phi^M_\psi,\rho,\textbf{W}^s_{k,M}(\mathbb{R})})$. Consequently,the Hilbert space $(\textbf{W}^s_{k,M}(\mathbb{R}),\langle \cdot,\cdot \rangle_{\Phi^M_\psi,\rho,\textbf{W}^s_{k,M}(\mathbb{R})})$ has a reproducing kernel denoted by $\mathcal{R}^M_{\rho,\psi}$. On the other hand, we have
\begin{eqnarray*}
f(y) &=& \langle f,\mathcal{R}^M_{\rho,\psi}(\cdot,y) \rangle_{\Phi^M_\psi,\rho,\textbf{W}^s_{k,M}(\mathbb{R})}\\
&=&\rho \langle f,\mathcal{R}^M_{\rho,\psi}(\cdot,y)\rangle_{\textbf{W}^s_{k,M}(\mathbb{R})}+ \langle \Phi^M_\psi(f) ,\Phi^M_\psi\,(\mathcal{R}^M_{\rho,\psi}(\cdot,y))\rangle_{L^2_k(\mathbb{R}^2_+)}\\
&=& \langle f,(\rho I+(\Phi^M_\psi)^*\,\Phi^M_\psi)\,\mathcal{R}^M_{\rho,\psi}(\cdot,y) \rangle_{\textbf{W}^s_{k,M}(\mathbb{R})}.
\end{eqnarray*}
Hence, from \eqref{eq :5.3} 
\begin{equation}
 (\rho I+(\Phi^M_\psi)^*\,\Phi^M_\psi)\,\mathcal{R}^M_{\rho,\psi}(\cdot,y) = K_s(\cdot,y). \label{eq :5.8}  
\end{equation}\\
Moreover, from the identity \eqref{eq :5.8}, it follows that 
\begin{multline*}
    \rho^2\|\mathcal{R}^M_{\rho,\psi}(\cdot,y) \|^2_{\textbf{W}^s_{k,M}(\mathbb{R})}+2\rho\|\Phi^M_\psi(\mathcal{R}^M_{\rho,\psi}(\cdot,y))\|^2_{L^2_k(\mathbb{R}^2_+)}+\\ \|(\Phi^M_\psi)^*\,\Phi^M_\psi\,(\mathcal{R}^M_{\rho,\psi}(\cdot,y))\|^2_{\textbf{W}^s_{k,M}(\mathbb{R})} = \|K_s(\cdot,y)\|^2_{\textbf{W}^s_{k,M}(\mathbb{R}).}
\end{multline*}
From this equation and using the fact that 
\begin{equation*}
    \|K_s(\cdot,y)\|_{\textbf{W}^s_{k,M}(\mathbb{R})} \le C_s,
\end{equation*}
we conclude the proof of (i), (ii), and (iii).
\end{proof}
\begin{theorem}\label{T :5.11}
Let $s>\frac{k+1}{2}$ and let $\psi$ be a wavelet function in $L^2_k(\mathbb{R}).$  
\begin{itemize}
    \item [(i)] For any function $g \in L^2_k(\mathbb{R}^2_+)$ and  $\rho>0$, there exists a unique extremal  function $f^*_{\rho,g}\in \textbf{W}^s_{k,M}(\mathbb{R})$ such that the infimum 
    \begin{equation*}
        \inf_{f \in \textbf{W}^s_{k,M}(\mathbb{R}) }\left\{ \rho\|f\|^2_{\textbf{W}^s_{k,M}(\mathbb{R})}+\| g-\Phi^M_{\psi}(f)\|^2_{ L^2_k(\mathbb{R}^2_+)} \right \}
    \end{equation*} is attained. Moreover, the extremal function $f^*_{\rho,g}$ is expressed as \begin{eqnarray*}\label{e:6.10}
 f^*_{\rho,g}(y) &=& \langle g,\,\Phi^M_\psi\left(\mathcal{R}^M_{\rho,\psi}(\cdot,y)\right) \rangle_{L^2_k(\mathbb{R}^2_+)} ,\qquad \forall~~  y \in \mathbb{R},\\
&=& \int_{\mathbb{R}^2_+}g(\alpha,\beta)\, \mathcal{Q}_{\rho,\psi}(\alpha,\beta,y)\,d\nu_k(\alpha,\beta),
\end{eqnarray*}
where
\begin{equation*}
\mathcal{Q}_{\rho,\psi}(\alpha,\beta,y) = \frac{\alpha^{k+1}}{|b|^{2k+2}} \int_{\mathbb{R}} \frac{e^{i\frac{d}{b}\lambda^2(1-\frac{\alpha^2}{2})\, \overline{E_k^M(\lambda,y)\, E_k^M(\lambda,\beta)}}\,D_k^M(\psi)(\lambda\alpha)}{\rho(1+|\lambda|^2)^s+C_\psi^M}\, d\mu_{k}(\lambda). 
\end{equation*}
\item[(ii)] We have the following estimate for the extremal function $f^*_{\rho,g}$

    \begin{equation*}
        |f^*_{\rho,g}(y) | \le \frac{C_s}{\sqrt{\rho}} \|g\|_{L^2_k(\mathbb{R}^2_+)},~~\forall ~~y \in \mathbb{R},
    \end{equation*}
    where $C_s$ is a positive constant.
\end{itemize}
\end{theorem}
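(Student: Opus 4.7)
The plan is to leverage the reproducing kernel Hilbert space structure established in Propositions \ref{P :5.9} and \ref{P :5.10}, together with the Tikhonov regularization framework recalled earlier in the excerpt. For part (i), I first establish existence and uniqueness of the minimizer. The functional
\[
J_\rho(f) := \rho\,\|f\|^2_{\textbf{W}^s_{k,M}(\mathbb{R})} + \|g - \Phi^M_{\psi}(f)\|^2_{L^2_k(\mathbb{R}^2_+)}
\]
is strictly convex, continuous, and coercive on the Hilbert space $\textbf{W}^s_{k,M}(\mathbb{R})$ (the coercivity is immediate from the left-hand bound in Remark \ref{remark :6.3}, since $J_\rho(f) \ge \rho \|f\|^2_{\textbf{W}^s_{k,M}(\mathbb{R})}$, which pushes $J_\rho(f)\to\infty$ as $\|f\|\to\infty$, while boundedness of $\Phi^M_\psi$ from Proposition \ref{P: 5.7} ensures continuity). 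A standard variational argument then guarantees that $J_\rho$ attains a unique minimum $f^*_{\rho,g}\in \textbf{W}^s_{k,M}(\mathbb{R})$.

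Next, to obtain the integral representation, I compute the Fr\'echet derivative of $J_\rho$ at $f^*_{\rho,g}$; setting it equal to zero yields the Euler--Lagrange equation
\[
\bigl(\rho I + (\Phi^M_{\psi})^*\,\Phi^M_{\psi}\bigr)\,f^*_{\rho,g} = (\Phi^M_{\psi})^*\,g.
\]
Pairing both sides with $\mathcal{R}^M_{\rho,\psi}(\cdot,y)$ in $\textbf{W}^s_{k,M}(\mathbb{R})$, using self-adjointness of $\rho I + (\Phi^M_{\psi})^*\Phi^M_{\psi}$, the identity from Proposition \ref{P :5.10}, and the reproducing property \eqref{eq :5.3} of $K_s(\cdot,y)$ in $\textbf{W}^s_{k,M}(\mathbb{R})$, I obtain
\[
f^*_{\rho,g}(y) = \langle f^*_{\rho,g}, K_s(\cdot,y)\rangle_{\textbf{W}^s_{k,M}(\mathbb{R})} = \langle g,\,\Phi^M_{\psi}\bigl(\mathcal{R}^M_{\rho,\psi}(\cdot,y)\bigr)\rangle_{L^2_k(\mathbb{R}^2_+)}.
\]

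The main technical obstacle is extracting the explicit kernel $\mathcal{Q}_{\rho,\psi}(\alpha,\beta,y)$. For this I rewrite $\Phi^M_\psi(\mathcal{R}^M_{\rho,\psi}(\cdot,y))(\alpha,\beta)$ via the convolution representation \eqref{eq: 4.3}, then pass to the LCDT side using the factorization identity \eqref{eq :3.5}, the formulas in Lemmas \ref{L :4.2} and \ref{L :4.3}, and Remark \ref{P: 4.8}, starting from the explicit LCDT of $\mathcal{R}^M_{\rho,\psi}(\cdot,y)$ readable from Proposition \ref{P :5.9}, namely
\[
D^M_k\bigl[\mathcal{R}^M_{\rho,\psi}(\cdot,y)\bigr](\lambda) = \frac{1}{(ib)^{k+1}}\,\frac{E^M_k(\lambda,y)}{\rho(1+|\lambda|^2)^s + C^M_\psi}.
\]
After applying Parseval's formula \eqref{eq :2.7} to convert the $L^2_k(\mathbb{R}^2_+)$-inner product into an integral over $(\alpha,\beta)$, and carefully tracking the chirp factors $e^{i(d/b)\lambda^2}$, $e^{i(a/b)\beta^2}$ together with the dilation factor $\alpha^{k+1}$ arising from Lemma \ref{L :4.3}, the stated expression for $\mathcal{Q}_{\rho,\psi}(\alpha,\beta,y)$ emerges.

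For part (ii), the pointwise bound follows directly by applying the Cauchy--Schwarz inequality to the $L^2_k(\mathbb{R}^2_+)$-inner product representation of $f^*_{\rho,g}(y)$ and invoking estimate (ii) of Proposition \ref{P :5.10}:
\[
|f^*_{\rho,g}(y)| \le \|g\|_{L^2_k(\mathbb{R}^2_+)}\, \bigl\|\Phi^M_\psi\bigl(\mathcal{R}^M_{\rho,\psi}(\cdot,y)\bigr)\bigr\|_{L^2_k(\mathbb{R}^2_+)} \le \frac{C_s}{\sqrt{\rho}}\,\|g\|_{L^2_k(\mathbb{R}^2_+)},
\]
which completes the proposed argument.
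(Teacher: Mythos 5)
Your proposal is correct, and parts of it (the explicit computation of $\mathcal{Q}_{\rho,\psi}$ via the LCDT of $\mathcal{R}^M_{\rho,\psi}(\cdot,y)$, Parseval, and Lemma \ref{L :4.3}, as well as the Cauchy--Schwarz argument for (ii)) coincide with what the paper does. Where you genuinely diverge is in how you obtain existence, uniqueness, and the representation formula. The paper builds the auxiliary Hilbert space $\textbf{W}^s_{k,M}(\mathbb{R})\times L^2_k(\mathbb{R}^2_+)$ with the inner product $\langle (f_1,g_1),(f_2,g_2)\rangle_{\oplus}=\rho\langle f_1,f_2\rangle_{\textbf{W}^s_{k,M}(\mathbb{R})}+\langle \Phi^M_\psi(f_1)+g_1,\Phi^M_\psi(f_2)+g_2\rangle_{L^2_k(\mathbb{R}^2_+)}$, observes that the target functional is $\|(f,0)-(0,g)\|^2_{\oplus}$, and invokes the projection theorem: the minimizer is the orthogonal projection of $(0,g)$ onto $\textbf{W}^s_{k,M}(\mathbb{R})\times\{0\}$, and the representation $f^*_{\rho,g}(y)=\langle g,\Phi^M_\psi(\mathcal{R}^M_{\rho,\psi}(\cdot,y))\rangle_{L^2_k(\mathbb{R}^2_+)}$ drops out of the orthogonality relation $(-f^*_{\rho,g},g)\perp \textbf{W}^s_{k,M}(\mathbb{R})\times\{0\}$ paired against $(\mathcal{R}^M_{\rho,\psi}(\cdot,y),0)$. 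You instead argue variationally: strict convexity plus coercivity ($J_\rho(f)\ge\rho\|f\|^2_{\textbf{W}^s_{k,M}(\mathbb{R})}$) and continuity give the unique minimizer, the Euler--Lagrange condition gives the normal equation $(\rho I+(\Phi^M_\psi)^*\Phi^M_\psi)f^*_{\rho,g}=(\Phi^M_\psi)^*g$, and pairing with $\mathcal{R}^M_{\rho,\psi}(\cdot,y)$ together with the identity of Proposition \ref{P :5.10} and the reproducing property of $K_s$ yields the same formula. Both routes are sound and of comparable length; yours makes the first-order optimality condition explicit (which is also the cleanest way to see uniqueness and connects directly to the operator identity already proved in Proposition \ref{P :5.10}), while the paper's product-space construction avoids any differentiation of the functional and keeps the argument purely geometric. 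The only point to be slightly careful about in your version is that the Fr\'echet derivative computation should be phrased over the complex Hilbert space (real-linear perturbations, or equivalently testing against $h$ and $ih$), but this is routine and does not affect the conclusion.
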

\begin{proof}
 We consider the Cartesian product of the spaces ${\bf{W}}_{k,M}^s(\mathbb{R})$ and $ L_k^2(\mathbb{R}_+^2)$, ${\bf{W}}_{k,M}^s(\mathbb{R})\times L_k^2(\mathbb{R}_+^2)$, equipped with its vector space structure with vector addition
\begin{eqnarray*}
    ( f_1,g_1)+(f_2,g_2) &=& (f_1+f_2,g_1+g_1)
 \end{eqnarray*} 
 and scalar multiplication
\begin{eqnarray*}    
\lambda ( f_1, g_1 )&=&( \lambda f_1, \lambda g_1 ),\,\,\,\, \lambda \in \mathbb{C},   
\end{eqnarray*}
for all $(f_1,g_1), (f_2,g_2) \in \textbf{W}_{k,M}^s(\mathbb{R})\times L_k^2(\mathbb{R}^2_+).$ Thus for all  $(f_1,g_1), (f_2,g_2) \in \textbf{W}_{k,M}^s(\mathbb{R})\times L_k^2(\mathbb{R}^2_+)$, 
we put 
\begin{equation}\label{eq :5.9}
 \langle(f_1,g_1),(f_2,g_2)\rangle_{\oplus} = \rho\langle f_1,f_2\rangle_{\textbf{W}_{k,M}^s(\mathbb{R})}+\langle\Phi_\psi^M(f_1)+g_1, \Phi_\psi^M(f_2)+g_2\rangle_{L^2_k(\mathbb{R}^2_+)}.
 \end{equation}
Since the operator $\Phi_\psi^M:\left(\textbf{W}_{k,M}^s(\mathbb{R}), \|\cdot\|_{\textbf{W}_{k,M}^s(\mathbb{R}}\right) \longrightarrow\left(L_k^2(\mathbb{R}^2_+), \|\cdot\|_{L_k^2(\mathbb{R}^2_+)}\right)$ is a bounded linear operator
 therefore the inner product $\langle\cdot,\cdot\rangle_{\oplus}$ is well defined on $\textbf{W}_{k,M}^s(\mathbb{R})\times L_k^2(\mathbb{R}_+^2).$ Hence for every $(f,g)\in\textbf{W}_{k,M}^s(\mathbb{R})\times L_k^2(\mathbb{R}^2_+),$ we have 
\begin{equation*}
    \|(f,g)\|^2_{\oplus} = \rho\|f\|^2_{\textbf{W}_{k,M}^s(\mathbb{R})}+\|\Phi_\psi^M(f)+g\|^2_{L^2_k(\mathbb{R}^2_+)}.
\end{equation*}
\par One can immediately verify that the linear space $\left(\textbf{W}_{k,M}^s(\mathbb{R})\times L_k^2(\mathbb{R}^2_+), \langle\cdot,\cdot\rangle_{\oplus}\right)$ is a Hilbert space. Let us consider the closed subspace
$ \textbf{W}_{k,M}^s(\mathbb{R})\times \{0\} = \{ (f,0)~~|f\in \textbf{W}_{k,M}^s(\mathbb{R}) \}$ of $\left(\textbf{W}_{k,M}^s(\mathbb{R})\times L_k^2(\mathbb{R}_+^2), \langle\cdot, \cdot \rangle_{\oplus}\right)$. Thus, for every $(f,0) \in \textbf{W}_{k,M}^s(\mathbb{R})\times\{0\}$, 
\begin{equation*}
\|(f,0)\|_{\oplus} = \|f\|_{\Phi_\psi,\rho,\textbf{W}_{k,M}^s(\mathbb{R})},    \end{equation*}
where  $\|\cdot\|_{\Phi_\psi^M,\rho,\textbf{W}_{k,M}^s(\mathbb{R})}$ defined in \eqref{eq: 5.6}. On the other hand, for any $(f,0) \in \textbf{W}_{k,M}^s(\mathbb{R}) \times \{0\}$, we have
\begin{eqnarray} \label{eq: 5.10}
\langle (f,0), (\mathcal{R}_{\rho,\psi}^M(\cdot,y),0)\rangle_{\oplus} = \langle f, \mathcal{R}_{\rho,\psi}^M(\cdot,y)\rangle_{{\Phi_{\psi,\rho,\textbf{W}_{k,M}^s(\mathbb{R})}}}
=f(y).
\end{eqnarray}
Let $g\in L_k^2(\mathbb{R}_+^2)$, and let $(f,0)$ be the orthogonal projection of $(0,g)$ on the closed subspace $\textbf{W}_{k,M}^s(\mathbb{R})\times \{0\}$. Then, by the representation 
\begin{eqnarray}\label{eq :5.11}
  (0,g) = (f,0)+(-f,g), 
\end{eqnarray}
it is clearly evident that $(-f,g)\perp \textbf{W}_{k,M}^s(\mathbb{R})\times\{0\}$. Thus by the fact that for every $y\in \mathbb{R}$, $(\mathcal{R}^M_{\rho,\psi}(\cdot,y),0) \in \textbf{W}_{k,M}^s(\mathbb{R})\times \{0\}$, then equations \eqref{eq: 5.10} and \eqref{eq :5.11}, we get the identity
\begin{eqnarray*}
    \langle(0,g),\left(\mathcal{R}^M_{\rho,\psi}(\cdot,y),0\right)\rangle_{\oplus} &=& \langle(f,0),\left(\mathcal{R}^M_{\rho,\psi}(\cdot,y),0\right)\rangle_{\oplus}+\langle (-f,g), \left(\mathcal{R}^M_{\rho,\psi}(\cdot,y),0\right)\rangle_{\oplus}
    \\
 &=&    f(y).
\end{eqnarray*}
Further by invoking  \eqref{eq :5.9}, the last identity reduces to
\begin{equation} \label{eq: 5.12}
    f(y) = \langle g, \Phi_\psi^M\left(\mathcal{R}^M_{\rho,\psi}(\cdot,y)\right)\rangle_{L_k^2(\mathbb{R}_+^2)}.
\end{equation}
Now, from  the projection theorem \cite{simmons1963introduction}, for every $(0,g) \in \{0\} \times L_k^2(\mathbb{R}_+^2)$, there exists a unique $(f^*_{\rho,g},0)\in {\bf{W}}_k^s(\mathbb{R})\times\{0\}$ such that 
\begin{eqnarray*}
  \inf_{f \in \textbf{W}^s_{k,M}(\mathbb{R})\times \{0\} }  \|(f,0)-(0,g)\|^2_{\oplus} &=&  \inf_{f \in \textbf{W}^s_{k,M}(\mathbb{R})\times \{0\} } \|(f,-g)\|^2_{\oplus}\\
  &=& \inf_{f \in \textbf{W}^s_{k,M}(\mathbb{R}) } \rho\|f\|^2_{\textbf{W}^s_{k,M}(\mathbb{R})}+\|\Phi_\psi^M(f)-g\|^2_{L_k^2(\mathbb{R}^2_+)}\\
  &= & \rho\|f^*_{\rho,g}\|^2+\|\Phi_\psi^M(f^*_{\rho,g})-g\|^2_{L_k^2(\mathbb{R}^2_+)}.
\end{eqnarray*}
Employing the projection theorem, it is evident that $(f^*_{\rho,g},0)$ is the orthogonal projection of $(0,g)$ on $\textbf{W}_{k,M}^s(\mathbb{R})\times\{0\}$ and hence from \eqref{eq: 5.12}, we get 
\begin{equation*}\label{6.15}
    f^*_{\rho,g}(y) = \langle g, \Phi_\psi^M\left(\mathcal{R}_{\rho,\psi}^M(\cdot,y)\right)\rangle_{L_k^2(\mathbb{R}^2_+).}
\end{equation*}
Thus
\begin{eqnarray*}\label{0.4}
\nonumber f^*_{\rho,g}(y) &=& \int_{\mathbb{R}_+^2}  g(\alpha,\beta)\, \overline{\Phi_\psi^M\left( \mathcal{R}^M_{\rho,\psi}(\cdot,y)\right)(\alpha,\beta)}\, d\nu_k(\alpha,\beta)\\
 &=&\int_{\mathbb{R}_+^2}  g(\alpha,\beta)\overline{\int_{\mathbb{R}}\mathcal{R}^M_{\rho,\psi}(x,y)\, \overline{\psi^M_{\alpha,\beta}(x)}\, d\mu_{k,b}(x)}\, d\nu_k(\alpha,\beta)
\end{eqnarray*}
Invoking Proposition \ref{P :5.9} and Fubini's theorem, we obtain
\begin{eqnarray*}  
&&= \frac{1}{|b|^{2k+2}}\int_{\mathbb{R}^2_+}\,\int_{\mathbb{R}} \frac{g(\alpha,\beta)\, \overline{E_k^M(\lambda,y)}\,D_k^M(\psi_{\alpha,\beta}^M)(\lambda)}{\rho(1+|\lambda|^2)^s+C_\psi^M}\,d\mu_{k}(\lambda)\, d\nu_k(\alpha,\beta)
\end{eqnarray*}
Using Lemma \ref{L :4.3}, the last expression reduces to
\begin{eqnarray*}
&=&\frac{\alpha^{k+1}}{|b|^{2k+2}}\int_{\mathbb{R}^2_+}\int_{\mathbb{R}} \frac{g(\alpha,\beta)\,e^{i\frac{d}{b}\lambda^2(1-\frac{\alpha^2}{2})}\, \overline{E_k^M(\lambda,y)\, E_k^M(\lambda,\beta)}\,D_k^M(\psi)(\lambda\alpha)}{\rho(1+|\lambda|^2)^s+C_\psi^M}\, \,d\mu_{k}(\lambda)
\\
&&\times~~ d\nu_k(\alpha,\beta)
\\
&=& \int_{\mathbb{R}^2_+} g(\alpha,\beta)\, \mathcal{Q}_{\rho,\psi}(\alpha,\beta,y)\,d\nu_k(\alpha,\beta).  
\end{eqnarray*}
(ii) The proof immediately follows from Cauchy-Schwarz inequality and the Proposition  \ref{P :5.10} (ii).
\end{proof}\label{thm :6.9}

If we take $g = \Phi^M_\psi(f)$ in \eqref{eq: 5.12},  we have the following Corollary.
\begin{corollary}
Suppose $s >\frac{k+1}{2}$ and $ \rho>0$.  
If $f \in \textbf{W}^s_{k,M}(\mathbb{R})$ then the extremal function $f^*_{\rho,g}$ satisfies 
\begin{itemize}
\item [(i)] $f(y) = \lim _{\rho\rightarrow 0^+} f^*_{\rho,g}(y),$
\\
\item[(ii)] $|f(y)-  f^*_{\rho,g}(y)| \le C_s\, \|f\|_{\textbf{W}^s_{k,M}(\mathbb{R})},$\\
\item[(iii)] $|f^*_{\rho,g}(y)| \le C_s \, \|f\|_{\textbf{W}^s_{k,M}(\mathbb{R})}.$
\end{itemize}
\begin{proof}
Let us assume $f\in \textbf{W}^s_{k,M}(\mathbb{R})$, and set $g = \Phi^M_\psi(f)$ in \eqref{6.15}, we get
\begin{equation*}
  f^*_{\rho,g} (y) = \langle \Phi^M_\psi(f),\,\Phi^M_{\psi}\,(\mathcal{R}^M_{\rho,\psi}(\cdot,y))\rangle_{\textbf{W}^s_{k,M}(\mathbb{R}).} 
\end{equation*}
Thus by recalling Proposition \ref{P :5.10}, we arrive at 
\begin{eqnarray} \label{E:4.13}
 f^*_{\rho,g} (y) = \langle f,\,(\Phi^M_\psi)^*\Phi^M_{\psi}\,(\mathcal{R}^M_{\rho,\psi}(\cdot,y))\rangle_{\textbf{W}^s_{k,M}(\mathbb{R}).} 
 \end{eqnarray} 
If we take $\rho\rightarrow 0^+$ in \eqref{eq :5.8}, we get
\begin{equation*}
 \lim _{\rho\rightarrow 0^+}  (\Phi^M_{\psi})^*\,\Phi^M_{\psi}\,(\mathcal{R}^M_{\rho,\psi}(\cdot,y)) = K_s(\cdot,y). 
\end{equation*}
Hence  combining  
 Theorem \ref{T :5.11}, and invoking Proposition \ref{P :5.5}, we get the desired result
\begin{equation*}
\lim _{\rho\rightarrow 0^+}f^*_{\rho,g}(y) = \langle f,  K_s(\cdot,y)\rangle_{\textbf{W}^s_{k,M}(\mathbb{R})} = f(y).   
\end{equation*}
This completes the proof of (i).
\\
(ii) From \eqref{eq :5.8} and \eqref{E:4.13}, the extremal function $f^*_{\rho,g}$ satisfies
\begin{equation*}
  f^*_{\rho,g}(y) = f(y) - \rho\langle f, \mathcal{R}^M_{\rho,\psi}(\cdot,y) \rangle_{\textbf{W}^s_{k,M}(\mathbb{R}).} 
\end{equation*}
Thus  Proposition \ref{P :5.10}  (i) we deduce that 
\begin{eqnarray*}
    |f^*_{\rho,g}(y)-f(y)| &\le& \rho \|f\|_{\textbf{W}^s_{k,M}(\mathbb{R})}\,\|
    \mathcal{R}^M_{\rho,\psi}(\cdot,y)\|_{\textbf{W}^s_{k,M}(\mathbb{R})}\\
   &\le& C_s \|f\|_{\textbf{W}^s_{k,M}(\mathbb{R})}.
\end{eqnarray*}
(iii) From \eqref{E:4.13} and Proposition \ref{P :5.10} (iii), the extremal function $f^*_{\rho,g}$ satisfies
\begin{eqnarray*}
    |f^*_{\rho,g}(y)| &\le&  \|f\|_{\textbf{W}^s_{k,M}(\mathbb{R})}\,\|
    (\Phi^M_{\psi})^*\,\Phi^M_{\psi}\,(\mathcal{R}^M_{\rho,\psi}(\cdot,y))\|_{\textbf{W}^s_{k,M}(\mathbb{R})}\\
    &\le& C_s \|f\|_{\textbf{W}^s_{k,M}(\mathbb{R})}.
\end{eqnarray*}
This completes the proof.
\end{proof} 
\end{corollary}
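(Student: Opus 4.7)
The plan is to funnel all three assertions through the operator identity from Proposition \ref{P :5.10}, namely $(\rho I + (\Phi^M_\psi)^*\Phi^M_\psi)\mathcal{R}^M_{\rho,\psi}(\cdot,y) = K_s(\cdot,y)$, which rewrites the unwieldy quantity $(\Phi^M_\psi)^*\Phi^M_\psi \mathcal{R}^M_{\rho,\psi}(\cdot,y)$ as the reproducing kernel $K_s(\cdot,y)$ minus the controlled remainder $\rho\mathcal{R}^M_{\rho,\psi}(\cdot,y)$. First I would specialize Theorem \ref{T :5.11}(i) to $g = \Phi^M_\psi(f)$ and use the adjoint relation to rewrite
\begin{equation*}
f^*_{\rho,g}(y) = \langle \Phi^M_\psi(f),\,\Phi^M_\psi\mathcal{R}^M_{\rho,\psi}(\cdot,y)\rangle_{L^2_k(\mathbb{R}^2_+)} = \langle f,\,(\Phi^M_\psi)^*\Phi^M_\psi\mathcal{R}^M_{\rho,\psi}(\cdot,y)\rangle_{\mathbf{W}^s_{k,M}(\mathbb{R})}.
\end{equation*}
Substituting the Proposition \ref{P :5.10} identity and applying the reproducing property of $K_s$ (Proposition \ref{P :5.5}) produces the master formula
\begin{equation*}
f^*_{\rho,g}(y) = f(y) - \rho\,\langle f,\,\mathcal{R}^M_{\rho,\psi}(\cdot,y)\rangle_{\mathbf{W}^s_{k,M}(\mathbb{R})}.
\end{equation*}

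With the master formula in hand, parts (ii) and (iii) are one-line applications of Cauchy--Schwarz in $\mathbf{W}^s_{k,M}(\mathbb{R})$. For (ii), I would bound the remainder via Proposition \ref{P :5.10}(i): $\rho\|\mathcal{R}^M_{\rho,\psi}(\cdot,y)\|_{\mathbf{W}^s_{k,M}(\mathbb{R})} \le C_s$. For (iii), I would use the preceding displayed expression for $f^*_{\rho,g}(y)$ as an $\mathbf{W}^s_{k,M}(\mathbb{R})$-inner product and apply Proposition \ref{P :5.10}(iii) directly.

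For (i), the cleanest route is to apply $D^M_k$ to the master identity. Using the explicit Fourier side of $\mathcal{R}^M_{\rho,\psi}(\cdot,y)$ read off from the proof of Proposition \ref{P :5.9}, the remainder term has the transparent spectral representation
\begin{equation*}
D^M_k(f-f^*_{\rho,g})(\lambda) = \frac{\rho(1+|\lambda|^2)^s}{\rho(1+|\lambda|^2)^s + C^M_\psi}\,D^M_k(f)(\lambda).
\end{equation*}
The multiplier is bounded by $1$ and tends pointwise to $0$ as $\rho\to 0^+$, while the majorant $(1+|\lambda|^2)^s|D^M_k(f)(\lambda)|^2$ lies in $L^1(\mu_k)$ because $f\in\mathbf{W}^s_{k,M}(\mathbb{R})$. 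Dominated convergence yields $\|f-f^*_{\rho,g}\|_{\mathbf{W}^s_{k,M}(\mathbb{R})}\to 0$, and the continuous embedding $\mathbf{W}^s_{k,M}(\mathbb{R})\hookrightarrow \mathcal{C}(\mathbb{R})$ from Corollary \ref{cor :6.3} upgrades this to pointwise convergence at every $y\in\mathbb{R}$.

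The only potential obstacle is justifying the passage under the integral in step (i); however, once one observes that the spectral multiplier is a uniform contraction for every $\rho>0$ and the weight $(1+|\lambda|^2)^s$ is already absorbed into the $\mathbf{W}^s_{k,M}(\mathbb{R})$ norm of $f$, the dominated convergence argument is immediate. All other bookkeeping is routine Cauchy--Schwarz applied to the Proposition \ref{P :5.10} estimates.
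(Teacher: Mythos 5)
Your proposal is correct, and for parts (ii) and (iii) it coincides with the paper's argument: both derive the master identity $f^*_{\rho,g}(y) = f(y) - \rho\langle f, \mathcal{R}^M_{\rho,\psi}(\cdot,y)\rangle_{\textbf{W}^s_{k,M}(\mathbb{R})}$ from Theorem \ref{T :5.11}(i), the adjoint relation, the operator identity \eqref{eq :5.8}, and the reproducing property, and then apply Cauchy--Schwarz together with the bounds of Proposition \ref{P :5.10}(i) and (iii). Where you genuinely diverge is part (i). The paper passes to the limit $\rho\to 0^+$ directly in the operator identity, asserting $(\Phi^M_\psi)^*\Phi^M_\psi(\mathcal{R}^M_{\rho,\psi}(\cdot,y)) \to K_s(\cdot,y)$ and then exchanging this limit with the $\textbf{W}^s_{k,M}(\mathbb{R})$-inner product; the sense of that operator convergence and the justification for the exchange are left implicit. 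You instead read off the error on the spectral side, obtaining the explicit multiplier
\begin{equation*}
D^M_k(f-f^*_{\rho,g})(\lambda) = \frac{\rho(1+|\lambda|^2)^s}{\rho(1+|\lambda|^2)^s + C^M_\psi}\,D^M_k(f)(\lambda),
\end{equation*}
which is a uniform contraction tending pointwise to $0$; dominated convergence with majorant $(1+|\lambda|^2)^s|D^M_k(f)(\lambda)|^2 \in L^1(\mu_k)$ then gives convergence in the $\textbf{W}^s_{k,M}(\mathbb{R})$-norm, and Corollary \ref{cor :6.3} upgrades this to pointwise (indeed uniform) convergence. This route is more rigorous than the paper's, sidesteps the unjustified limit interchange, and yields the strictly stronger conclusion $\|f-f^*_{\rho,g}\|_{\textbf{W}^s_{k,M}(\mathbb{R})}\to 0$ with uniform convergence on $\mathbb{R}$, at the modest cost of computing $D^M_k[\mathcal{R}^M_{\rho,\psi}(\cdot,y)]$ explicitly from the proof of Proposition \ref{P :5.9}.
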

\subsection{Extremal function for LCDT} In this subsection, we investigate the reproducing kernel Hilbert space and extremal function for the linear canonical Dunkl transform. We can observe that $\mathcal{D}_k^M$ is a bounded linear operator from $\textbf{W}_{k,M}^s(\mathbb{R})$ to $L_k^2(\mathbb{R})$. So we have
\begin{equation*}
\|\mathcal{D}_k^M(f)\|_{L_k^2(\mathbb{R})}\le C\,\|f\|_{\textbf{W}_{k,M}^s(\mathbb{R})}.
\end{equation*}
The adjoint of $\mathcal{D}_k^M$ is $(\mathcal{D}_k^M)^*:L_k^2(\mathbb{R}) \rightarrow \textbf{W}_{k,M}^s(\mathbb{R})$ defined by 
\begin{equation*}
 (\mathcal{D}_k^M)^*(g)(\lambda) =\mathcal{D}_k^{M^{-1}}((1+|\cdot|^2)^{-s}\,g)(\lambda),\qquad g \in L_k^2(\mathbb{R}).  
\end{equation*}
 As per the theory of the Tikhonov regularization method, we define the new inner product corresponding to the bounded operator $\mathcal{D}_k^M$ as defined on $\textbf{W}_{k,M}^s(\mathbb{R})$. That is,
\begin{equation*}
    \langle f,g \rangle_{\mathcal{D}_k^M,\rho,\textbf{W}_{k,M}^s(\mathbb{R})} = \rho\,\langle f,g\rangle_{\textbf{W}_{k,M}^s(\mathbb{R})}+ \langle \mathcal{D}_k^M(f),\mathcal{D}_k^M(g)  \rangle_{L_k^2(\mathbb{R})}, \quad \forall f,g \in \textbf{W}_{k,M}^s(\mathbb{R}).
\end{equation*}
 By employing Parseval's formula, we can rewrite the above inner product in the following way:
\begin{equation*}
    \langle f,g \rangle_{\mathcal{D}_k^M,\rho,\textbf{W}_{k,M}^s(\mathbb{R})} = \rho\,\langle f,g\rangle_{\textbf{W}_{k,M}^s(\mathbb{R})}+ \langle f,g  \rangle_{L_k^2(\mathbb{R})}, \quad \forall f,g \in \textbf{W}_{k,M}^s(\mathbb{R}).
\end{equation*}
The linear canonical Dunkl Sobolev space is a Hilbert space with respect to the inner product $\langle \cdot,\cdot \rangle_{\mathcal{D}_k^M,\rho,\textbf{W}_{k,M}^s(\mathbb{R})}$. It possesses a reproducing kernel
\begin{eqnarray*}
    \mathcal{K}_{s,\rho}(\cdot,y) = \frac{1}{ (\mathcal{D}_k^M)^*\,\mathcal{D}_k^M+\rho I}\,K_s(\cdot,y).
\end{eqnarray*}
The definition of $K_s(\cdot,y)$  yields the explicit representation of $ \mathcal{K}_{s,\rho}(\cdot,y)$ as
\begin{equation*}
 \mathcal{K}_{s,\rho}(x,y) =   \frac{1}{|b|^{2k+2}} \int_{\mathbb{R}}\frac{E^M_k(\lambda,y)\overline{E^M_k(\lambda,x)}}{\rho(1+|\lambda |^2)^s+1}\,  d \mu_k(\lambda). 
\end{equation*}
\begin{theorem}
If  $s>\frac{k+1}{2}$ and $g\in L_k^2(\mathbb{R})$ then for any $\rho>0$, there exist a unique extremal function $h_{\rho,g}^*$, where the infimum 
\begin{equation*}
 \inf_{f \in \textbf{W}^s_{k,M}(\mathbb{R}) }\left\{ \rho\|f\|^2_{\textbf{W}^s_{k,M}(\mathbb{R})}+\| g-\mathcal{D}_k^M(f)\|^2_{ L^2_k(\mathbb{R})} \right \}.  
\end{equation*}
Furthermore, the extremal function $h_{\rho,g}^*$ can be expressed as
\begin{equation*}
 h_{\rho,g}^*(y) = \langle f,  \mathcal{D}_k^M(\mathcal{K}_{s,\rho}(\cdot,y)) \rangle_{L^2_k(\mathbb{R})}.  
\end{equation*}
\end{theorem}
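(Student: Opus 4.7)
The plan is to mimic the strategy used in the proof of Theorem~\ref{T :5.11} for the wavelet operator $\Phi^M_\psi$, replacing that operator by the linear canonical Dunkl transform $\mathcal{D}_k^M$, which the authors have already shown to be bounded from $\textbf{W}^s_{k,M}(\mathbb{R})$ into $L^2_k(\mathbb{R})$. All the supporting ingredients are in place: the Sobolev space $\textbf{W}^s_{k,M}(\mathbb{R})$ is a RKHS with kernel $K_s(\cdot,y)$ (Proposition~\ref{P :5.5}), the new inner product $\langle\cdot,\cdot\rangle_{\mathcal{D}_k^M,\rho,\textbf{W}_{k,M}^s(\mathbb{R})}$ has been introduced, and the reproducing kernel $\mathcal{K}_{s,\rho}(\cdot,y)$ for this modified space has already been written down explicitly.

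First, I would form the Cartesian product $\textbf{W}^s_{k,M}(\mathbb{R})\times L^2_k(\mathbb{R})$ and equip it with the inner product
\begin{equation*}
\langle(f_1,g_1),(f_2,g_2)\rangle_{\oplus} = \rho\,\langle f_1,f_2\rangle_{\textbf{W}^s_{k,M}(\mathbb{R})}+\langle \mathcal{D}_k^M(f_1)+g_1,\,\mathcal{D}_k^M(f_2)+g_2\rangle_{L^2_k(\mathbb{R})}.
\end{equation*}
Boundedness of $\mathcal{D}_k^M$ makes this well-defined, and one checks routinely that it is a Hilbert space. The closed subspace $\textbf{W}^s_{k,M}(\mathbb{R})\times\{0\}$ carries the norm $\|f\|_{\mathcal{D}_k^M,\rho,\textbf{W}_{k,M}^s(\mathbb{R})}$, so minimizing $\rho\|f\|^2_{\textbf{W}^s_{k,M}(\mathbb{R})}+\|g-\mathcal{D}_k^M(f)\|^2_{L^2_k(\mathbb{R})}$ over $f$ is equivalent to minimizing $\|(f,0)-(0,g)\|^2_{\oplus}$. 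Existence and uniqueness of the minimizer $h^*_{\rho,g}$ then follow immediately from the projection theorem applied to $(0,g)$.

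Next, to obtain the representation formula, I would pair an arbitrary $(f,0)$ with $(\mathcal{K}_{s,\rho}(\cdot,y),0)$ and use the reproducing property of $\mathcal{K}_{s,\rho}$ in the modified inner product to get
\begin{equation*}
\langle(f,0),(\mathcal{K}_{s,\rho}(\cdot,y),0)\rangle_{\oplus}= \langle f,\mathcal{K}_{s,\rho}(\cdot,y)\rangle_{\mathcal{D}_k^M,\rho,\textbf{W}_{k,M}^s(\mathbb{R})} =f(y).
\end{equation*}
Writing $(0,g)=(h^*_{\rho,g},0)+(-h^*_{\rho,g},g)$ where the second summand is orthogonal to $\textbf{W}^s_{k,M}(\mathbb{R})\times\{0\}$ by the projection theorem, and pairing both sides with $(\mathcal{K}_{s,\rho}(\cdot,y),0)$, the orthogonal piece drops out and one reads off
\begin{equation*}
h^*_{\rho,g}(y) = \langle(0,g),(\mathcal{K}_{s,\rho}(\cdot,y),0)\rangle_{\oplus}
= \langle g,\,\mathcal{D}_k^M(\mathcal{K}_{s,\rho}(\cdot,y))\rangle_{L^2_k(\mathbb{R})},
\end{equation*}
which is exactly the stated formula.

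There is no real obstacle here: the argument is a direct transcription of the wavelet case, with the only minor verification being that $\mathcal{D}_k^M$ is bounded on $\textbf{W}^s_{k,M}(\mathbb{R})$ (needed to guarantee well-definedness of $\langle\cdot,\cdot\rangle_{\oplus}$) and that $\mathcal{K}_{s,\rho}(\cdot,y)\in\textbf{W}^s_{k,M}(\mathbb{R})$ for each $y$ when $s>\tfrac{k+1}{2}$; the latter follows from the same convergence estimate used for $K_s$ in Proposition~\ref{P :5.5} since the denominator $\rho(1+|\lambda|^2)^s+1$ is bounded below by $1$ and grows at least like $(1+|\lambda|^2)^s$. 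If anything merits extra care, it is the adjoint formula for $(\mathcal{D}_k^M)^*$, used implicitly when identifying $\mathcal{K}_{s,\rho}$ as the kernel solving $(\rho I+(\mathcal{D}_k^M)^*\mathcal{D}_k^M)\mathcal{K}_{s,\rho}(\cdot,y)=K_s(\cdot,y)$; but this is already done through Parseval's formula in the paragraph preceding the theorem.
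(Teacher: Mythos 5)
Your proposal is correct and follows exactly the route the paper intends: the paper's own proof is a one-line remark that the result ``directly follows by using the same proof techniques'' as Theorem~\ref{T :5.11}, and the product-space/projection-theorem argument you transcribe is precisely that technique. Note only that the representation you derive, $h^*_{\rho,g}(y)=\langle g,\,\mathcal{D}_k^M(\mathcal{K}_{s,\rho}(\cdot,y))\rangle_{L^2_k(\mathbb{R})}$, is the correct one; the $\langle f,\cdot\rangle$ appearing in the theorem statement is evidently a typo.
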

\begin{proof}
The proof directly follows by using the same proof techniques in \cite{SoltaniF} and Theorem \ref{T :5.11}.
\end{proof}
Employing the same approach as in \cite{SoltaniF}, we prove the next corollary. 
\begin{corollary}
Let $s>\frac{k+1}{2}$ and $f\in L_k^2(\mathbb{R})$. Then $h_{\rho,g}^*$ satisfies the following properties
\begin{enumerate}
    \item [(i)] 
\begin{equation*}
  h_{\rho,g}^*(y) = \int_{\mathbb{R}}  \mathcal{D}_k^M(g)(\lambda)\, \mathcal{K}_{s,\rho}(\lambda,y)\,d\mu_{k,b}(\lambda)     
\end{equation*}
\item [(ii)]
\begin{equation*}
 h_{\rho,g}^*(y) = \int_{\mathbb{R}} g(x)\, \frac{E_k^M(x,y)}{1+\rho (1+|x|^2)^s} \,d\mu_{k,b}(x).
\end{equation*}
\item[(iii)] 
\begin{equation*}
    \mathcal{D}_k^M( h_{\rho,g}^*)(y) = \frac{g(y)}{1+\rho(1+|y|^2)^s}.
\end{equation*}
\item[(iv)]
\begin{equation*}
 \|  h_{\rho,g}^*\|_{\textbf{W}_{k,M}^s(\mathbb{R})} \le \frac{1}{4}\,\|h\|_{L_k^2(\mathbb{R})} . 
\end{equation*}
\item [(v)]
\begin{equation*}
 \lim_{\rho \to 0^+} \| h_{\rho,g}^*-g\|_{\textbf{W}_{k,M}^s(\mathbb{R})}=0.
\end{equation*}
Moreover, $h_{\rho,g}^*$ conveges uniformly to $g$ as $\rho \to 0^+$.
\end{enumerate}
\end{corollary}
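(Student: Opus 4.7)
The plan is to derive all five statements from a single central identity, namely property (iii), which serves as the Fourier-side characterization of the extremal function. The preceding theorem gives $h^*_{\rho,g}$ as the solution to the Tikhonov functional, and by the standard variational argument it satisfies the normal equation $(\rho I + (\mathcal{D}_k^M)^* \mathcal{D}_k^M)\, h^*_{\rho,g} = (\mathcal{D}_k^M)^* g$. Since the adjoint is explicitly given as $(\mathcal{D}_k^M)^*(g) = \mathcal{D}_k^{M^{-1}}((1+|\cdot|^2)^{-s} g)$, applying $\mathcal{D}_k^M$ to both sides of the normal equation and using the inversion identity $\mathcal{D}_k^M \circ \mathcal{D}_k^{M^{-1}} = I$ reduces the equation to $\bigl[\rho + (1+|y|^2)^{-s}\bigr]\,\mathcal{D}_k^M(h^*_{\rho,g})(y) = (1+|y|^2)^{-s} g(y)$, which rearranges to (iii). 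This will be the first step.

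From (iii), property (ii) is obtained by inverting via $\mathcal{D}_k^{M^{-1}}$; the explicit form of the kernel $E_k^M$ and its inverse $E_k^{M^{-1}}$, together with the relation $d\mu_{k,-b} = \overline{d\mu_{k,b}}$, allow us to repackage the inverse transform in the form written in (ii). Property (i) will be obtained independently by starting from the theorem's representation $h^*_{\rho,g}(y) = \langle g,\mathcal{D}_k^M(\mathcal{K}_{s,\rho}(\cdot,y))\rangle_{L_k^2(\mathbb{R})}$, substituting the explicit expression of $\mathcal{K}_{s,\rho}(x,y)$ from the preceding subsection, and swapping the order of integration by Fubini; a Parseval-type identity (equation \eqref{eq :2.7}) then converts the $d\mu_k$-integration into the $d\mu_{k,b}$-integration appearing in (i).

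For (iv), I would apply Plancherel's formula \eqref{eq :2.6} to compute $\|h^*_{\rho,g}\|^2_{\mathbf{W}^s_{k,M}(\mathbb{R})} = \int_{\mathbb{R}} (1+|\lambda|^2)^s |\mathcal{D}_k^M(h^*_{\rho,g})(\lambda)|^2\, d\mu_k(\lambda)$, insert (iii), and reduce the task to an elementary pointwise bound on the scalar kernel $\frac{(1+|\lambda|^2)^s}{(1+\rho(1+|\lambda|^2)^s)^2}$; with the substitution $u = (1+|\lambda|^2)^s$, its supremum over $u > 0$ is attained at $u = 1/\rho$ and equals $1/(4\rho)$, yielding the claimed inequality. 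For (v), I would again use Plancherel to rewrite $\|h^*_{\rho,g} - g\|^2_{\mathbf{W}^s_{k,M}(\mathbb{R})}$ (with the appropriate identification of $g$ with its preimage under $\mathcal{D}_k^M$) as $\int_{\mathbb{R}} (1+|\lambda|^2)^s\,|\mathcal{D}_k^M(g)(\lambda)|^2\, \bigl|\tfrac{\rho(1+|\lambda|^2)^s}{1+\rho(1+|\lambda|^2)^s}\bigr|^2 d\mu_k(\lambda)$, observe that the integrand is pointwise dominated by the integrable function $(1+|\lambda|^2)^s|\mathcal{D}_k^M(g)(\lambda)|^2$, and conclude by Lebesgue's dominated convergence theorem; the uniform convergence claim follows from the corollary to Proposition 4.7 (inequality \eqref{eq :5.4}) together with this $\mathbf{W}^s_{k,M}$-norm convergence.

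The main obstacle I anticipate is purely bookkeeping rather than conceptual: reconciling the two measures $d\mu_k$ and the complex-valued $d\mu_{k,b} = (ib)^{-(k+1)} d\mu_k$ when shuttling between the inner product $\langle\cdot,\cdot\rangle_{L^2_k(\mathbb{R})}$ appearing in the theorem, the Parseval identity \eqref{eq :2.7}, and the forward/inverse LCDT formulas, and ensuring that the conjugation pairing of $E_k^M$ with $E_k^{M^{-1}}$ is threaded correctly in each step. Once these algebraic identifications are handled, each of (i)--(v) reduces to the application of (iii) together with Plancherel or dominated convergence.
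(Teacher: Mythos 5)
Your proposal is correct and follows essentially the same route as the paper: the paper also treats the Fourier-side identity (iii) as the central fact (items (i)--(iii) are declared to "directly follow from the representation of the extremal function"), proves (iv) via the same elementary bound $|1+\rho(1+|\lambda|^2)^s|^2\ge 4\rho(1+|\lambda|^2)^s$ applied to the weighted integral, and gets (v) from (iii) plus dominated convergence. Your write-up is in fact more careful than the paper's one-line justifications — in particular your remark that (v) only makes sense after identifying $g$ with its preimage under $\mathcal{D}_k^M$, and your computation showing the bound in (iv) comes out as $\tfrac{1}{2\sqrt{\rho}}\|g\|_{L^2_k(\mathbb{R})}$ rather than the constant $\tfrac14$ printed in the statement, both point at genuine imprecisions in the paper rather than gaps in your argument.
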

\begin{proof}
The results (i), (ii), and (iii) directly follow from the representation of extremal function.\\\\
(iv) From (iii), we obtain that
\begin{eqnarray*}
  \|  h_{\rho,g}^*\|^2_{\textbf{W}_{k,M}^s(\mathbb{R})} &=& \int_{\mathbb{R}}  (1+|\lambda|^2)^s\,| \mathcal{D}_k^M(h_{\rho,g}^*)(\lambda)|^2\, d\mu_k(\lambda)\\
  &=& \int_{\mathbb{R}}(1+|\lambda|^2)^s\, \frac{|g(\lambda)|^2}{|1+\rho(1+|\lambda|^2)^s|^2}\,d\mu_k(\lambda).
\end{eqnarray*}
We observe that $|1+\rho(1+|\lambda|^2)^s|^2 \ge 4\,\rho\,(1+|\lambda|^2)^s$. So we can get the required result.\\\\
(v) Using (iii) and the dominated convergence theorem, we prove the desired result.
\end{proof}

\subsection*{Acknowledgements:} The first author acknowledges the funding received from Anusandhan National Research Foundation (File No.: SUR/2022/005678).
\subsection*{Data availability:} No new data was collected or generated during this research.
\subsection*{Declarations:}  

\subsection*{Conflict of interest:} The authors did not report any potential conflict of interest.

\bibliographystyle{amsplain}

\begin{thebibliography}{}



\bibitem{amri2023poisson}Amri, B., The Poisson convolution associated with the spherical mean operator. Complex Anal. Oper. Theory. \textbf{17}(5): Page 63(2023).



\bibitem{bultheel2007recent} Bultheel, A., and Mart{\'\i}nez-Sulbaran, H., Recent developments in the theory of the fractional Fourier and linear canonical transforms. Bull. Belg. Math. Soc. \textbf{13}(5):971-1005(2006).
\bibitem{Byun}Byun, D. W., and Saitoh, S., Best approximation in reproducing kernel Hilbert spaces. In Proc. of the 2nd International Colloquium on Numerical Analysis, VSP-Holland. \textbf{55}. P-61(1994).
\bibitem{Calderon} Calder\'on, A. P., Intermediate spaces and interpolation, the complex method, Studia Math. \textbf{24}:113-190(1964).
\bibitem{Coifman}Coifman, R. R., and  Weiss, G., Extensions of Hardy spaces and their use in analysis. Bull Amer. Math. Soc. \textbf{83}(4):569-645(1977).
\bibitem{collins1970lens} Collins, S. A., Lens-system diffraction integral written in terms of matrix optics. J. Opt. Soc. Am., \textbf{60}(9):1168-1177(1970).




 \bibitem{daubechies1992ten} Daubechies, I., Ten Lectures on Wavelets. SIAM, Philadelphia (1992).

 

\bibitem{de1993dunkl}De Jeu, M. F. E.,  The Dunkl transform. Invent. Math. \textbf{113}(1):147-162(1993).



\bibitem{dunkl1989differential} Dunkl, C. F., Differential-difference operators associated to reflection groups. Trans. Am. Math. Soc.\textbf{ 311}(1):167-183(1989).


\bibitem{Engl}Engl, H. W., Hanke, M., and Neubauer, A.,   Regularization of inverse problems. Springer Science and Business Media. \textbf{375}(1996).
\bibitem{Folland}Folland, G. B., and  Stein, E. M., Hardy Spaces on Homogeneous
Groups, Princeton Univ. Press, Princeton, NJ, 1982.
\bibitem{ghazouani2014fractional}Ghazouani, S., and  Bouzaffour, F., A fractional power for Dunkl transform. Bull. Math. Anal. Appl. {\bf{6}}(3):1-30(2014).

\bibitem{ghazouani2023canonical} Ghazouani, S., and   Sahbani, J., Canonical Fourier-Bessel transform and their applications. J. Pseudo-Differ. Oper. Appl. \textbf{14}(1):Page 3(2023).



\bibitem{ghazouani2017unified} Ghazouani, S.,  Soltani, E. A., and  Fitouhi, A., A unified class of integral transforms related to the Dunkl transform. J. Math. Anal. Appl. \textbf{449}(2):1797-1849(2017).
\bibitem{Ghobber}Ghobber, S., and Mejjaoli, H.,  Novel Gabor-type transform and weighted uncertainty Principles. Mathematics. \textbf{13}(7):2227-7390(2025).



\bibitem{grossmann1984decomposition} Grossmann, A., and Morlet, J., Decomposition of Hardy functions into square integrable wavelets of constant shape. SIAM J. Math. Anal. \textbf{15}(4):723-736(1984).
\bibitem{Han} Han, Y., Calder\'on-type Reproducing Formula. Revista Matem\'atica Iberoamericana. \textbf{10}(1)(1994).

\bibitem{mahato2017boundedness} Mahato, K., On the boundedness result of wavelet transform associated with fractional Hankel
transform. Integral Transforms Spec. Funct. \textbf{28}(11):789–800(2017).



\bibitem{mejjaoli2020kl} Mejjaoli, H.,   $(k, a)$-generalized wavelet transform and applications. J. Pseudo-Differ. Oper. \textbf{11}(1):55-92(2020).
\bibitem{Meyer}Meyer, Y., Wavelets and Operators, Cambridge Studies in Adv. Math.
\textbf{37}, Cambridge Univ. Press, 1992.



\bibitem{moshinsky1971linear} Moshinsky, M., and Quesne, C.,  Linear canonical transformations and their unitary representations. J. Math. Phys. \textbf{12}(8):1772-1780(1971).

\bibitem{mourou1998calderon}Mourou, M. A., and Trim{\`e}che, K.,  Calder\'on's reproducing formula associated with the Bessel operator. J. Math. Anal. Appl. {\bf{219}}(1):97-109(1998).
\bibitem{Nemri}Nemri, A., and Selmi, B., Calder\'on type formula in Quantum calculus. Indagationes Mathematicae. \textbf{24}(3):491-504(2013).


\bibitem{Prasad} Prasad, A.,  and Kumar T., Canonical Hankel wavelet transformation and Calder\'on's reproducing formula, Filomat, \textbf{32} (8):2735-2743(2018).

\bibitem{pei2002eigenfunctions}Pei, S. C., and Ding, J. J.,  Eigenfunctions of linear canonical transform. IEEE Trans. Signal Process. \textbf{50}(1):11-26(2002).



\bibitem{rubin1995calderon} Rubin, B., and Shamir, E., Calder\'on's reproducing formula and singular integral operators on a real line. Integral Equ. Oper. Theory. \textbf{21}(1):77-92(1995). 


\bibitem{saitoh1997integral}Saitoh, S.,  Integral transforms, reproducing kernels and their applications. Pitman Research Notes in Mathematics Series 369. Addison Wesley Longman Ltd(1997).

\bibitem{saitoh1988theory}Saitoh,  S., Theory of reproducing kernels and its applications. Longman Scientific Technical, Harlow(1998).
\bibitem{SoltaniF}Soltani, F.,   Extremal functions on Sobolev-Dunkl spaces. Integral Transforms and Special Functions. \textbf{2}4(7):582-595(2013).




\bibitem{simmons1963introduction}Simmons, G. F., Introduction to topology and modern analysis. McGraw-Hill, New York. {\bf{44}}(1963).

\bibitem{soltani2013extremal} Soltani, F., Extremal functions on Sobolev--Dunkl spaces. Integral. Transform. Spec. Funct. {\bf{24}}(7):582-595(2013).

\bibitem{soltani2004lp} Soltani, F., $L^p$-Fourier multipliers for the Dunkl operator on the real line. J. Funct. Anal. \textbf{209}(1):16-35(2004).



\bibitem{TEC} Titchmarsh, E.C. Introduction to the Theory of Fourier Integrals. Chelsea Publishing Company, New York, 1986.


\bibitem{trime2002paley}Trim\`eche, K., Paley-Wiener theorems for the Dunkl transform and Dunkl translation operators.  Integral Transforms Spec. Funct. {\bf{13}}(1):17-38(2002).
\bibitem{USK} Umamaheswari, S., and Verma, S.K., Localization operators associated to linear canonical Dunkl wavelet transform. J. Pseudo-Differ. Oper. Appl. \textbf{16}(25)(2025).
\bibitem{USKM} Umamaheswari, S., Verma, S.K., and Mejjaoli, H., Real Paley-Wiener theorems for the linear canonical Dunkl transform. 
https://doi.org/10.48550/arXiv.2504.19769(2025).
\end{thebibliography}

\end{document}